\newtheorem{theorem}{Theorem}
\newtheorem{lemma}[theorem]{Lemma}
\newtheorem{proposition}[theorem]{Proposition}
\newtheorem{definition}[theorem]{Definition}
\theoremstyle{remark}
\theoremstyle{definition}
\newtheorem{remark}{Remark}
\numberwithin{theorem}{section} \numberwithin{equation}{section}
\setlist[enumerate]{leftmargin=*,label=\rm{(\arabic*)}}
\setlist[itemize]{leftmargin=*}
\newcommand{\B}{\mathcal{B}}
\newcommand{\D}{\mathcal{D}}
\newcommand{\HE}{\mathcal{H}_\mathcal{E}}
\newcommand{\Hil}{\mathcal{H}}
\newcommand{\R}{\mathbb{R}}
\newcommand{\X}{\mathfrak{X}}
\newcommand{\E}{\mathcal{E}}
\newcommand\blfootnote[1]{%
  \begingroup
  \renewcommand\thefootnote{}\footnote{#1}%
  \addtocounter{footnote}{-1}%
  \endgroup
}
\def\lp{\left(}
\def\rp{\right)}
  \newenvironment{dedication}
  {%\clearpage           % we want a new page
   %\thispagestyle{empty}% no header and footer
   %\vspace*{\stretch{1}}% some space at the top
   \itshape             % the text is in italics
   \raggedleft          % flush to the right margin
  }
\begin{document}

\title{An explicit construction of heat kernels and Green's functions in measure spaces}
\author{Palle Jorgensen \and Jay Jorgenson
\and Lejla Smajlovi\'{c}}
%\footnote{The second named author acknowledges grant support from several PSC-CUNY Awards, which are jointly funded
%by the Professional Staff Congress and The City University of New York.}}

\date{\today}
\begin{abstract}
We explicitly construct a heat kernel as a Neumann series for certain function spaces, such as $L^{1}$, $L^{2}$, and Hilbert spaces,
associated to a locally compact Hausdorff space $\X$ with Borel $\sigma$-algebra $\mathcal{B}$, and
endowed with additional measure-theoretic data.
Our approach is an adaptation of classical work due to
Minakshishundaram and Pleijel, and it requires as input a parametrix or small time approximation to the heat kernel.
The methodology developed in this article applies to yield new instances of heat
kernel constructions, including normalized Laplacians on finite and infinite graphs as well as Hilbert spaces with reproducing
kernels.
\end{abstract}

\blfootnote{Keywords: heat kernels, Neumann series, generalized time convolution, parametrix}
\blfootnote{AMS Classification numbers.  35K08,  58J35, 44A35, 31C20, 46E30, 46E22, 47A10}
\maketitle

\begin{dedication}
\hspace{4cm}
{Dedicated to the memory of Professor Robert Strichartz.}
\end{dedication}

\section{Introduction}

\subsection{Imagining the heat kernel in the physical world}
From its inception, a certain class of heat kernels $K(x,y;t)$ consists of mathematical objects which are rooted in the physical world.   For example,
one can imagine introducing a unit burst of heat at a point $x$ in a domain $M$ at time zero, after which one then can measure
the proportion of heat which has diffused to the point $y$ after time $t$.  This amount of heat is denoted by $K(x,y;t)$.
Equivalently, one can envision a particle which undertakes a random walk on $M$ beginning at $x$ at time zero, so then
$K(x,y;t)$ is interpreted as the probability that the particle is at the point $y$ at time $t$.

Without exaggeration, there
are hundreds of mathematics articles which study the heat kernel in various settings for $M$, such as Euclidean domains,
Riemannian manifolds, projective K\"ahler varieties, symmetric spaces, and so on, and with some articles considering different possibilities
for time, including positive real time, positive integer time, or arbitrary time-scales; see \cite{BP01}.   In our opinion, it is valid to
keep the physical or probabilistic language when describing properties of a heat kernel $K(x,y;t)$ in the case when $M$
is modeled locally on a Euclidean domain since this means that in one way or another one can draw $M$, hence one can imagine diffusion or a random walk.

\subsection{A general point of view}
Beyond a physical or probabilistic setting, there is an extensive body of literature which considers heat kernels more generally,
such as when $M$ is replaced by  a locally compact Hausdorff space $\mathfrak{X}$ with a Borel $\sigma$-algebra  $\mathcal{B}$  of subsets
and possessing a $\sigma$-finite measure $\lambda$ on $(\X,\mathcal{B})$.
We refer the reader to the particularly eloquent article
\cite{Gr03} where the author defines and studies a heat kernel in a metric measure space;  see, specifically, Definition 2.1 on page 146 of \cite{Gr03}.
With various assumptions, it is shown in \cite{Gr03} that a heat kernel can give rise to a Markov process for which the heat kernel is
the associated transition density; see  loc. cit., page 147.
  In this regard, the heat kernel retains its connection to probability theory.
We also note that the heat kernel in these instances has an associated
semigroup of operators, and a heat operator is obtained by considering the infinitesimal generator of
this semigroup.   In essence, the infinitesimal generator is a type of Laplacian operator.

Our main goal in the present article is to explicitly construct a heat kernel as an integral kernel
in certain measure space settings, similar to those considered in \cite{Gr03}, with positive real time;
see Theorems \ref{thm: formula for heat kernel}, \ref{thm: formula for heat kernel 2} and
 \ref{thm: formula for H- heat kernel}.
  In effect,
we assume that, somehow, we have obtained a parametrix or small time approximation for the heat kernel
as an integral kernel.  Given this approximation, the heat kernel
itself is computed using a convergent Neumann series which is defined using the parametrix.  As is commonplace, the
parametrix itself has reasonably few requirements, so in many instances our construction yields an easily attainable result.

The parametrix approach we present is modeled on the classical method from the
Riemannian setting
as given the seminal paper \cite{MP49}, which itself attributes some of the main ideas to Carleman; see loc. cit., page 243.
In the recent articles \cite{CJKS24} and \cite{JKS26} the authors developed the parametrix approach for constructing heat kernels in
the setting of finite and certain infinite graphs, respectively.  Again, there is considerable flexibility in choosing a parametrix.
For example, it is shown in \cite{CJKS24} and \cite{JKS26} that different choices of a parametrix lead to different identities, including
expressions for the classical $I$-Bessel function.  With that in mind, one can say that the parametrix construction of the
heat kernel has been shown to be quite robust.

There are numerous studies in the literature which study heat kernels in metric measure spaces; see, for example \cite{Gr03} and
references therein.  As far as we can see, the present article is the first which explores an explicit construction of a heat
kernel starting with a parametrix in such settings.

\subsection{Related studies}
Our study is motivated with a view toward analysis on infinite graphs in a general framework, namely:
(i) Generalized integral kernels and transition kernels, with an emphasis on generalized graph Laplacians, their induced Green's functions and
potential theoretic properties;  (ii) the relevant potential theoretic constructions which we shall make use of; and (iii) families of associated
stochastic processes, directly related to (i) and (ii). Our discussion below of these themes will be supplemented by background references.
While the general literature is vast, we, of course, will focus on the details that are tailored to our needs.  These ideas are connected to recent papers
by P. Jorgensen (first named author) and his (other) co-authors, especially the following:
With S. Bezuglyi for (i) (see \cite{BJ23} and references therein), %commented out  BJ21, BJ21b, BJ19a, BJ19b, BJ18
with E. Pearse for (ii) (see \cite{JP19} and references therein) as well as \cite{JRT25}, %commented out , JP14, JP13, JP11, JP11b, JP10}
and with F. Tian for (iii) (see \cite{JT25} and references therein). %commented out , JT24, JT23, JT23b, JT21

%Of importance in the flow of our main topics in the paper are some of the following themes, which indeed are interconnected:
%Generalized time convolution; convolutional factorization; parametrix constructions of heat kernels;
%resistance metric; resistance measure; Green's functions; and Moore-Penrose inverses. Background discussions of these themes are included in the above cited
%publications.  Additionally, for the benefit of readers, we offer the following citations dealing with technical points which are not yet widely
%familiar: \cite{AK25, CJKS24, GO'N25, JKS24, K-H24, LLZMY25, Mo21, Na25, Ne25, OS21, PP23, PAGLjRS25, Ti24, Ti25, WPL25, XZLYCCGQ25}.

While there is a rich variety of frameworks which involve incarnations of Green's functions and associated diffusions, each case and each application entails unique choices of both function spaces and associated Laplacians.  %These choices, in turn, reflect a diverse variety of applications and settings.
An interesting new development which has served as motivation for our present results is an adaptation of graph Laplacians to a continuous Borel framework. In the various settings one then arrives at a natural choice of a measure $\mu$ and an associated $L^p$ space where the Laplacian has dense domain. This is the framework we follow.
%where we begin with the measure $\mu$, but we shall refer to some of the earlier literature for details below and in the technical sections of our paper.
We are then able to obtain global results where the time variable enters at the outset and, therefore, yields a more direct solution to the particular diffusion equations for the setting at hand.  We refer to the following earlier papers which deal with a variety of settings of generalized Laplacians, heat kernels, Green's functions, boundary spaces, and potential theory:   \cite{AJ24, B-GS-KSY06, BJ24, JS23, JST24, JT25b, St90}.  This list of citations is not complete, but there are additional papers cited in these articles for further reading.

\subsection{Organization of the paper.}
In section 2 we define the setup for our study and set notation.  In section 3
we define the heat kernel in our context.  Though this component of our paper is similar to considerations by other authors, we want to carefully state
the minimal number of conditions needed to proceed.  In section 4 we define the convolution to be employed and prove its basic properties
in the setting of $L^{p}$ spaces; section 5 consists of an analogous study for separable Hilbert spaces.  It is important to note that
in each instance the notation of convergence means, in general terms, with respect to the norm of the given space.  In section 6 we define
a parametrix in each context we study, and prove the existence of a heat kernel through explicit construction in terms of a convergent Neumann series.
{\it Thus, existence of the heat kernel in each context follows from existence of the corresponding parametrix.}
In section 7 we discuss the important question as to how one can construct a parametrix itself and set the stage for further studies of this question in specific settings.  In section 8 we show how other properties of the heat kernel follow from our analysis,
and perhaps additional assumptions which, again, need to be viewed on a case-by-case basis.
In section 9 we describe how one can proceed from the heat kernel to other important
considerations, such as Green's
function, resistance measures, wave kernels and, in some cases, a pre-trace formula.  We finish with section 10 where we discuss examples of parametrix for graphs and reproducing kernel Hilbert spaces and offer some concluding remarks.

\medskip
\section{Our setup}\label{sec. setup}

Let $\mathfrak{X}$ denote a locally compact Hausdorff space, and $\mathcal{B}$ a Borel $\sigma$-algebra of subsets of $\X$.
Let $\lambda$ denote a $\sigma$-finite measure on $(\X,\mathcal{B})$. Let
$$
\mathcal{B}_{\rm fin}:=\{A\in \B: \lambda (A)<\infty\}
$$
denote the subalgebra of sets of finite $\lambda$ measure with associated function spaces
$$
\D_{\rm fin}=\text{span}\{\chi_A: A\in \mathcal{B}_{\rm fin}\}
\,\,\,\,\,
\text{\rm and}
\,\,\,\,\,
\D_{\rm fin}^{\dagger}=\text{span}\{\chi_A: A\in \mathcal{B}_{\rm fin}\cup \{\X\}\},
$$
where $\chi_{A}$ is the indicator function of $A \in \mathcal{B}$.
If $\lambda(\X)<\infty$, then $\D_{\rm fin}=\D_{\rm fin}^{\dagger}$, otherwise we have
that $\D_{\rm fin}=\D_{\rm fin}^{\dagger} \cup \{\chi_{\X}\}$.
We note that any $\sigma$-finite measure $\lambda$ is uniquely determined by its values on $\mathcal{B}_{\rm fin}$.

\medskip
\subsection{Transfer operators and conductance}
As described in \cite{JP19}, one defines a transfer operator $R$ on
$(\X,\mathcal{B},\lambda)$ as an operator on $\D_{\rm fin}^{\dagger}$ which satisfies
the following two conditions.

\begin{enumerate}[leftmargin=0.75in]
  \item  The operator $R$ is symmetric, meaning that
  $$
  \int\limits_{\X}(Rf)(x)g(x)\lambda(dx)=\int\limits_{\X}f(x)(Rg)(x)\lambda(dx)
  \,\,\,\,\,
  \text{\rm for all}
  \,\,\,\,\,
  f,g\in \D_{\rm fin}^{\dagger}.
  $$
  \item The operator $R$ is semi-positive, which we also call non-negative, meaning that for all $f\in \D_{\rm fin}^{\dagger}$ we have
that  $Rf\geq0$ whenever  $f\geq 0$.
\end{enumerate}

\medskip
\noindent
Associated to any transfer operator $R$, there exists a measure $\rho$
 on $(\X^2,\mathcal{B}\otimes \mathcal{B})$ such that $\lambda=\rho\circ \pi_i^{-1}$, where $\pi_i$ is projection of $\X^2$
  onto the variable $i$, where $i$ equals $1$ or $2$,  and for which $\rho_x:=\rho(x,\cdot)$ is the kernel for $R$.  In other words,
 $$
 (Rf)(x)=\int\limits_{\X}f(y)\rho_x(dy)
 $$
 for all $f\in \mathcal{D}_{\mathrm{fin}}$.
The measure $\rho$ is often called the \emph{conductance}, and it is a symmetric measure on $(\X^2, \mathcal{B}\otimes \mathcal{B})$, meaning that
$$
\rho(A\times B)=\rho(B\times A)
\,\,\,\,\,
\text{for all $A,B\in\mathcal{B}$.}
$$
Moreover, $\rho$ is a positive measure and
$$
0\leq \rho_x(\X)<\infty
\,\,\,\,\,
\text{\rm for all $x\in\X$.}
$$
Finally, we recall from \cite{BJ23} that a positive transfer operator $R$ is symmetric if and only if it defines a symmetric measure $\rho$ by
$$
\rho(A\times B)=\int\limits_{\X}\chi_A(x)R(\chi_B)(x)\lambda(dx).
$$

\begin{remark}\label{rem:graph}
If $X$ is a discrete space whose elements are viewed as vertices of a weighted graph, then the measure
$\rho$ are edge weights. In this instance, an example of a transfer operator $R$ is the operator determined by the
associated adjacency matrix.
\end{remark}

\begin{remark}\label{rem:measure_to_transfer}
%The above general definition of a transfer operator is motivated by the analogy with the electrical networks.
A different approach to defining a transfer operator, stemming from the properties of the symmetric measures, is described in \cite{BJ23}.
To do so, start with a $\sigma$-finite symmetric measure on $(\X^2,\mathcal{B}\times\mathcal{B})$ called $\tilde{\rho}$ and set  $\lambda:=\tilde\rho\circ\pi_1^{-1}$. Then there exists a unique system of conditional $\sigma$-finite measures $\tilde \rho_x$
for each $x \in \X$ such that
  $$
  \iint\limits_{\X^2}f(x,y)\tilde \rho (dx,dy) = \int\limits_\X \left(\int\limits_\X f(x,y) \tilde\rho_x(dy)\right)\lambda(dx),
  $$
  for any $\tilde\rho$-integrable function $f$. With this set-up, the operator $\tilde R$  defined on $\mathcal{D}_{\rm fin}^{\dagger} $ by
  $$
  \tilde{R}(g)(x)=\int\limits_\X g(y)\tilde\rho_x(dy)
  $$
  is symmetric and positive.  As such, $\tilde{R}$ is a transfer operator on $(\X,\mathcal{B},\lambda)$.
\end{remark}

\medskip
\subsection{The Laplacian}

With the above notation, for a given transfer operator $R$ let us define a degree function $c(x)$ associated to $R$ by
 \begin{equation}\label{eq. c defn}
 c(x)=R( \chi_{\X})(x)=\int\limits_{\X}\rho_x(dy)=\rho_x(\X)<\infty.
 \end{equation}
 In line with the discussion of Remark \ref{rem:graph}, the degree function can be viewed as the analogue of
 the vertex degree of a finite graph.

Also, as discussed in Remark \ref{rem:measure_to_transfer}, one can define a degree function associated to a symmetric measure $\tilde \rho$ as the degree function associated to the
transfer operator $\tilde R$ generated by $\tilde\rho$.

Going forward, we
assume that the degree function $x\mapsto c(x)=\rho_x(\X)$  satisfies the following conditions.

\medskip
\textbf{Assumption C.} For $\lambda$-almost every $x \in \X$, we have that
$$
 0<c(x)<\infty.
$$
Additionally we assume, that $c\in L_1^{\rm loc}(\lambda)$;  specifically, it is assumed that
$$
\int\limits_A c(x)\lambda(dx)< \infty
\,\,\,\,\,
\text{\rm for all}
\,\,\,\,\,
A\in \mathcal{B}_{\rm fin}.
$$

\begin{definition}\label{def:Laplacian}
   The Laplacian on $(\X, \mathcal{B},\lambda)$ is the closure of the symmetric operator
   $$
   \Delta :=(c-R)
   $$
   on the domain $\D_{\rm fin}^{\dagger}$. In other words, the Laplacian $\Delta f$ of $f$ is
   defined as
 $$
 \Delta f(x)= \int\limits_{\X}(f(x)-f(y))\rho_x(dy)
\,\,\,\,\,
\text{\rm for any $f \in \D_{\rm fin}^{\dagger}$.}
$$
\end{definition}

\begin{remark}
Given that $c(x)>0$ for  $\lambda$-almost every $x\in\X$,
one can also define a $\lambda$-almost every analogue of the probabilistic/normalized Laplacian $\tilde \Delta$ by
$$
\tilde \Delta: = \frac{1}{c}\Delta = I - \frac{1}{c}{R},
$$
so then
$$
\tilde \Delta f(x)=\frac{1}{c(x)} \int\limits_{\X}(f(x)-f(y))\rho_x(dy).
$$
The normalized Laplacian can be written in terms of the Markov operator $P$
$$
P(x,dy):= \frac{1}{c(x)} \rho_x(dy),
$$
where the measure $c^{-1}(x) \rho_x(dy)$
is simply the probability measure obtained by normalizing of $\rho_x(dy)$ to have unit mass.   Explicitly, we can write that
$$
(Pf)(x):=\frac{1}{c(x)}(Rf)(x)=\frac{1}{c(x)}\int\limits_{\X}f(y)\rho_x(dy)=
\int\limits_{\X}f(y)P(x,dy).
$$
With this notation, we have that
$$
\tilde \Delta f=(I-P)f
$$
on the domain $\D_{\rm fin}^{\dagger}$.
\end{remark}

\medskip
\subsection{Hilbert spaces and measure spaces of interest}
Note that under Assumption C, one can define another measure $\nu$ on the measurable space $(\X,\mathcal{B})$ by
\begin{equation} \label{eq. nu defn}
\nu(A):=\int_A c(x)\lambda(dx)=\int_A \int_\X\rho_x(dy)\lambda(dx)=\rho(A\times \X).
\end{equation}
The measure spaces  $L^{2}(\lambda)$ as well as $L^{2}(\nu)$ will be studied going forward.
Additionally, we will consider so-called energy space $\HE$ which is defined as follows.

\begin{definition} For any $f, g \in \D_{\rm fin}^{\dagger}$, define the energy inner product $  \langle\cdot, \cdot \rangle_{\E}$ by
 $$
  \langle f,g\rangle_{\E}:=\frac{1}{2}\iint\limits_{\X^2}(f(x)-f(y))(g(x)-g(y))d\rho(x,y).
  $$
The associated norm is $||f||_{\E} =\sqrt{\langle f,f\rangle_{\E}}$.  With this,
the energy space $\HE$ is defined as
  $$
  \HE:=\{f\in \D_{\rm fin}^{\dagger}: ||f||_{\E}<\infty\}.
  $$
\end{definition}

\medskip
\noindent
We will use the generic notation $\Hil$ to denote a Hilbert space, including any one of
the Hilbert spaces $L^{2}(\lambda)$, $L^{2}(\nu)$ or $\HE$.

\medskip
Going forward, we will need the following lemma on differentiating an inner product in a Hilbert space $\Hil$ with respect to a parameter.

\begin{lemma} \label{lem: dif of inner product wrt t}
   Let $f:\X\times(0,\infty)$ be a function such that: (i) for any $t\in(0,\infty)$, function $x\mapsto f(x,t)$ belongs to $\Hil$, and (ii) for any $x\in\X$, function $t\mapsto f(x,t)$ is differentiable. Further assume that
   $$
   \partial_t f(x,t)\in \Hil
   \,\,\,\,\,
   \text{\rm for all $x\in\X$.}
   $$
    Then
   $$\partial_t \langle f(x,t),g(x)\rangle_{\Hil} = \langle \partial _t f(x,t),g(x)\rangle_{\Hil}
   \,\,\,\,\,
   \text{\rm for all $g\in\Hil.$}
   $$
\end{lemma}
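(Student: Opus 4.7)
The plan is to work directly from the definition of the derivative through difference quotients. First I would invoke linearity of the inner product in its first slot---noting that hypothesis (i) guarantees each slice $f(\cdot,s)$ lies in $\Hil$, so that each inner product is defined---to write
\[
\frac{\langle f(\cdot,t+h),g\rangle_{\Hil}-\langle f(\cdot,t),g\rangle_{\Hil}}{h}
= \left\langle \frac{f(\cdot,t+h)-f(\cdot,t)}{h},\, g\right\rangle_{\Hil}.
\]
Hypothesis (ii) then identifies the pointwise (in $x$) limit of the difference quotient as $h\to 0$ with $\partial_t f(x,t)$, which by the additional assumption again lies in $\Hil$.

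The heart of the argument is passing to the limit \emph{inside} the inner product. By Cauchy--Schwarz,
\[
\left| \left\langle \frac{f(\cdot,t+h)-f(\cdot,t)}{h} - \partial_t f(\cdot,t),\, g\right\rangle_{\Hil} \right|
\leq \left\| \frac{f(\cdot,t+h)-f(\cdot,t)}{h} - \partial_t f(\cdot,t)\right\|_{\Hil} \cdot \|g\|_{\Hil},
\]
so it suffices to establish norm convergence of the difference quotient to $\partial_t f(\cdot,t)$ in $\Hil$. For each of the three concrete Hilbert spaces of interest ($L^{2}(\lambda)$, $L^{2}(\nu)$, $\HE$) the norm is an integral against a measure, and the mean value theorem supplies the representation $(f(x,t+h)-f(x,t))/h = \partial_t f(x,\xi_{x,h})$ for some $\xi_{x,h}$ between $t$ and $t+h$. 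Combined with a local bound on $s\mapsto \partial_t f(x,s)$ near $t$, the dominated convergence theorem then gives the desired convergence in $\Hil$-norm, and letting $h\to 0$ yields the claimed identity.

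The main obstacle is precisely the legitimacy of this dominated convergence step: the bare hypotheses of the lemma do not themselves furnish an integrable majorant for the family of difference quotients, so the convergence must be verified---or the hypothesis tacitly strengthened to a local $\Hil$-dominating condition on $\partial_t f$---on a case-by-case basis whenever the lemma is applied later. The linearity plus Cauchy--Schwarz skeleton sketched above is routine; the substantive content lies in producing such a majorant in each application, which the subsequent sections accomplish through their specific parametrix/Neumann-series constructions.
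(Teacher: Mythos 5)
Your proposal takes essentially the same route as the paper: the paper's entire proof of this lemma is the single sentence ``The statement follows by applying the Schwarz inequality for the inner product on $\Hil$,'' which is precisely the linearity-plus-Cauchy--Schwarz skeleton you describe. The caveat you raise is real --- reducing via Cauchy--Schwarz to norm convergence of the difference quotient requires more than the stated hypotheses (pointwise differentiability in $t$ and $\partial_t f(\cdot,t)\in\Hil$ do not by themselves yield an $\Hil$-norm limit), and the paper's one-line proof passes over this gap silently; your observation that a local dominating condition must be supplied in each application is a fair and useful strengthening of what the authors actually wrote.
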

\begin{proof}
The statement follows by applying the Schwarz inequality for the inner product on $\Hil$.
\end{proof}

A portion of our discussion below will consider the space $L^{1}(\lambda)$ and
its subspace $L^{1}(\lambda) \cap L^{\infty}(\lambda)$.  In this setting, and more generally,
it is immediate, by applying the classical H\"older inequality, that one can state and prove an $L^{p}$ version of Lemma \ref{lem: dif of inner product wrt t}, which we state below.

\begin{lemma}\label{rem:Lp_differentiation}
Let $p, q \geq 1$ be real numbers such that $1/p+1/q = 1$, with the usual convention that
$q=\infty$ if $p=1$ and vice versa. Assume $f:\X\times(0,\infty)$ is a function with the following properties: (i) for any $t\in(0,\infty)$, function $x\mapsto f(x,t)$ belongs to $L^{p}(\lambda)$, and (ii) for any $x\in\X$, function $t\mapsto f(x,t)$ is differentiable. Further assume that
   $$
   \partial_t f(x,t)\in L^{p}(\lambda)
   \,\,\,\,\,
   \text{\rm for all $x\in\X$.}
   $$
    Then
   $$\partial_t \int\limits_\X f(x,t)g(x) \lambda(dx) = \int\limits_\X \partial _t f(x,t)g(x)\lambda(dx)
   \,\,\,\,\,
   \text{\rm for all $g\in L^{q}(\lambda).$}
   $$
\end{lemma}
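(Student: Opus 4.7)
The plan is to mirror exactly the argument sketched for Lemma \ref{lem: dif of inner product wrt t}, with H\"older's inequality taking the role of Schwarz. I would introduce the difference quotient
$$
\phi_h(x) := \frac{f(x, t+h) - f(x, t)}{h},
$$
and observe that hypothesis (ii) is exactly the statement that $\phi_h(x) \to \partial_t f(x, t)$ pointwise in $x$ as $h \to 0$. Combined with $\partial_t f(\cdot, t) \in L^p(\lambda)$, this produces a natural candidate for the $L^p$ derivative of $t \mapsto f(\cdot, t)$.

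Next, I would form the algebraic identity
$$
\frac{1}{h}\lrb{\int\limits_\X f(x, t+h) g(x)\lambda(dx) - \int\limits_\X f(x, t) g(x)\lambda(dx)} - \int\limits_\X \partial_t f(x, t) g(x)\lambda(dx) = \int\limits_\X \bigl[\phi_h(x) - \partial_t f(x, t)\bigr] g(x)\lambda(dx),
$$
and apply the classical H\"older inequality with dual exponents $p, q$ on the right to obtain the bound
$$
\lri{\int\limits_\X \bigl[\phi_h(x) - \partial_t f(x, t)\bigr] g(x)\lambda(dx)} \leq \|\phi_h - \partial_t f(\cdot, t)\|_{L^p(\lambda)} \, \|g\|_{L^q(\lambda)}.
$$
Since $g \in L^q(\lambda)$ is fixed, the conclusion of the lemma follows as soon as $\|\phi_h - \partial_t f(\cdot, t)\|_{L^p(\lambda)} \to 0$ when $h \to 0$.

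The main obstacle is therefore to upgrade the pointwise convergence $\phi_h \to \partial_t f(\cdot, t)$ to $L^p(\lambda)$-convergence. Under the stated hypotheses, the cleanest route is the dominated convergence theorem: if one has the (mild) additional regularity that $s \mapsto \partial_s f(x, s)$ is continuous on a neighborhood of $t$ with $\sup_{|s-t| \leq \delta}|\partial_s f(\cdot, s)| \in L^p(\lambda)$, then the representation $\phi_h(x) = h^{-1}\int_t^{t+h}\partial_s f(x, s)\, ds$ produces an $L^p$-integrable majorant for the family $\{\phi_h\}_{|h|\leq\delta}$, and $L^p$-convergence follows. This is the only non-routine ingredient; it is precisely the implicit regularity already hidden in the corresponding Schwarz-based proof of Lemma \ref{lem: dif of inner product wrt t}, so the $L^p$-statement is genuinely a parallel of the Hilbert space version rather than an independent result.
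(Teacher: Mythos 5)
Your proposal follows essentially the same route as the paper: the paper's entire proof consists of the remark that the statement "is immediate, by applying the classical H\"older inequality" (mirroring the one-line Schwarz-inequality proof of Lemma \ref{lem: dif of inner product wrt t}), and your write-up is the natural expansion of that sentence, reducing everything via H\"older to $L^p$-convergence of the difference quotients. Your observation that this last step is not actually guaranteed by hypotheses (i)--(ii) alone and requires a local $L^p$ majorant for $\partial_s f(\cdot,s)$ is a fair and accurate criticism of the lemma as stated; the paper's one-line argument silently assumes exactly the regularity you make explicit.
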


\medskip
\subsection{Properties of the Laplacians }
In general terms, the Laplacians we study are semi-positive and (essentially) self-adjoint.  Let us discuss briefly
where these results are shown to be true in the various spaces we consider.
 Throughout our paper, following \cite{BJ19a}, we will pose an
additional assumption on the symmetric measure $\rho$ on  $(\X^2,\mathcal{B}\otimes \mathcal{B})$, which is the following.
\medskip

\textbf{Assumption E. } For every set $A\in\mathcal{B}_{\rm fin}$, the function
$$
x\mapsto \rho_x(A)=\int\limits_{\X}\chi_A(y)\rho_x(dy)
$$
belongs to $L^1(\lambda)\cap L^2(\lambda)$.

\medskip
\noindent
Under this assumption, it is proved in Sections 7 and 8 of \cite{BJ19a} that the Laplacian $\Delta$ introduced in Definition \ref{def:Laplacian} is a positive definite
and essentially self-adjoint in the
Hilbert space $L^2(\lambda)$. Moreover, $\Delta$ acting in $L^2(\lambda)$ is bounded if and only if
$c\in L^{\infty}(\lambda)$, where function $c$ is defined by \eqref{eq. c defn}.
According to \cite[Lemma 3.5]{JP19}, the Markov operator $P$ is self-adjoint on $L^2(\nu)$, hence so is the normalized Laplacian $\tilde \Delta=I-P$.

When viewed as an operator on the energy space $\HE$, the operator $\Delta$ is a positive definite and symmetric operator, but in general it is not
 self-adjoint.  However, $\Delta$ does admit a self-adjoint extension; see Theorem 8.5 of \cite{BJ19a}.  We refer an interested reader to
 \cite[Theorem 2.14]{BJ23} where further properties of operators $R$ and $P$ are summarized.  Moreover, the Laplacian, when viewed as an operator
 in the energy Hilbert space $\HE$  is semi-bounded on its natural dense domain; hence, by Friedrichs’ theorem, $\Delta$ has a canonical semi-bounded
 extension with the same lower bound as $\Delta$ on $\D_{\rm fin}^{\dagger}$.

The Laplacians in the $L^2$ spaces $L^2(\lambda)$ and $L^2(\nu)$ (where the measure $\nu$ is defined by \eqref{eq. nu defn}) are automatically essentially self-adjoint; for details,
see, for example, \cite{JP17, JP19, JPT18}.

\medskip
\section{Heat kernel on $L^{p}$ spaces and Hilbert spaces}

We will now define a heat kernel in the various spaces under consideration.  Further properties of
the heat kernel will be established in a later section with further assumptions as needed.

\begin{definition} \label{def: HK lp space} Let $\mu$ be a measure on the space $(\X,\mathcal{B})$.
  Let $p\geq 1$ be a real number and let $q$ be such that  $\frac{1}{p}+\frac1q=1$, where as usual for $p=1$ we set $q=\infty$ and vice versa.
  The $L^p$-heat kernel associated to the Laplacian $\Delta$ is a function $K:\X\times\X\times[0,\infty)\to\mathbb{R}$ with the following properties.
  \begin{enumerate}[leftmargin=0.75in]
    \item For all $x,y\in\X$, the function $K(x,y;\cdot):[0,\infty)\to\mathbb{R}$ is differentiable in $t \in (0,\infty)$, continuous at $t=0$
     and such that $K(\cdot, y;t):\X\to\mathbb{R}\in L^p(\mu)$ for all $y\in\X$ and $t>0$.
    \item The function $K(\cdot,y;\cdot):\X\times[0,\infty)\to\mathbb{R}$ is a solution to the equation $(\partial_t+\Delta)K(x,y;t)=0$
for all $(x,t)\in \X\times(0,\infty)$, fixed $y\in\X$, $\mu$-almost everywhere, and where $\Delta$ is assumed to act on the variable $x$.
\item For all $f\in L^q(\mu)$, we have that
    \begin{equation}\label{eq. dirac delta cond HK}
\lim\limits_{t \rightarrow 0^{+}}\int\limits_{\X}K(x,y;t)f(y)\mu(dy)=f(x)
\,\,\,\,\,
\text{\rm $\mu$-almost everywhere $x\in\X$.}
\end{equation}
  \end{enumerate}
\end{definition}

Note that the Laplacian depends upon a measure $\lambda$ on the space $(\X,\mathcal{B})$,
which is not necessarily equal to the measure $\mu$ in the above definition.

In an analogous way it is possible to define the heat kernel on the Hilbert space $\mathcal{H}$.

\medskip
\begin{definition}\label{def: HK Hilb space}
  Let $\mathcal{H}$ be a Hilbert space of functions which has $\X$ as a domain. The heat kernel on $\mathcal{H}$ associated to the Laplacian $\Delta$ is a function $K:\X\times\X\times[0,\infty)\to\mathbb{R}$ with the following properties.
  \begin{enumerate}
    \item For all $x,y\in\X$, $K(x,y;\cdot):[0,\infty)\to\mathbb{R}$ is differentiable on $(0,\infty)$, continuous at $t=0$ and such that $K(\cdot, y;t)\in \mathcal{H}$ for all $y\in\X$, $t>0$.
    \item The function $K(\cdot,y;\cdot):\X\times[0,\infty)\to\mathbb{R}$ is a solution to the equation
$(\partial_t+\Delta)K(x,y;t)=0$ for all $t>0$, where $\Delta$ acts on $x$, for fixed $y\in\X$.
    \item For all $f\in \mathcal{H}$, we have that
    \begin{equation}\label{eq. dirac delta cond HK2}
\lim\limits_{t\rightarrow 0^+} \langle K(x,\,\cdot;t), f(\cdot)\rangle_{\mathcal{H}}=f(x), \quad x\in\X,
\end{equation}
where $\langle \cdot, \cdot \rangle_{\mathcal{H}}$ denotes the inner product on $\mathcal{H}$.
  \end{enumerate}
\end{definition}

\medskip
\begin{remark} If $\Hil$ is a reproducing kernel Hilbert space, then one can replace \eqref{eq. dirac delta cond HK2}
with the statement that
\item \begin{equation}\label{eq. dirac delta cond HK3}
\langle K(x,\,\cdot;0), f(\cdot)\rangle_{\mathcal{H}}=f(x)
\,\,\,\,\,\text{\rm for all $x\in\X$}
\end{equation}
and for all $f\in \mathcal{H}$, where $\langle \cdot, \cdot \rangle_{\mathcal{H}}$ denotes the inner product on $\mathcal{H}$.
In other words, $K(x,y;0)$ is the reproducing kernel for the Hilbert space $\mathcal{H}$.
\end{remark}

%\textcolor{red}{Does this make sense?? Namely, in classical settings, the heat kernel has the Dirac delta distributional property, but this would entail integrating against smooth compactly supported functions. In our setting the space $(\X,\mu)$ is "only" measurable, so I guess this could be the analogue.. This agrees with the Dirac delta assumptions on the parametrix for both $(1,\infty)$ and $(2,2)$ parametrix.}

\begin{remark}
In \cite{Gr03} the author defines a heat kernel on a measure space which includes a number of additional
properties which are assumed to hold; see also \cite{GHL14}.  The additional properties assumed in \cite{Gr03} and \cite{GHL14}, and
elsewhere, include well-known features from geometry and probability such as positivity and symmetric of the heat kernel as
well as a semi-group property.  The development we give in this paper is to use the above assumptions to prove the
existence of the heat kernel through an explicit means, assuming one has a parametrix,
after which we examine the circumstances under which the additional
assumptions from \cite{Gr03} and \cite{GHL14} become properties of the heat kernel.
\end{remark}

\begin{remark}
The definition of the heat kernel is an adaptation from the setting of manifolds to the situation of $L^p$ or Hilbert spaces;
however, there is an important technical difference.
Namely, if $M$ is a manifold with associated volume element $\omega$, then the heat kernel $K_M$ is assumed
to be continuous and integrable in the two space variables, smooth in the time variable on $(0,\infty)$ and such that
$$
\lim_{t\to0^+ } \int\limits_M K_M(x,y;t)f(y) \omega(dy)=f(x),
$$
for all bounded, continuous functions on $M$. In the setting of a Hilbert space, not all bounded and continuous
functions belong to this space; the same is true for the space $L^q(\mu)$.
\end{remark}

\begin{remark}
  The definition of the $L^p$-probabilistic heat kernel and the Hilbert space probabilistic heat kernel associated to the
  probabilistic/normalized Laplacian $\tilde\Delta$ is the same as above, with the only difference being  that one needs
  to take the Laplacian $\tilde\Delta$ instead of $\Delta$ in the second condition. In the sequel we will work mainly with
  $\Delta$. However, it is straightforward to see that all our findings will hold true with $\Delta$ replaced by $\tilde\Delta$
  and $L^2(\mu)$ replaced by $L^2(\nu)$.
\end{remark}

\section{Generalized time convolution}

In classical analysis, convolution of functions is used in different contexts. Usually, the convolution of two functions on some space is integral of a product of a function and its shift, or the integral of a product of a function with a certain kernel. It is very useful, because, loosely speaking, it can help  smoothen the function, meaning that convolution produces a new function on the underlying space which possesses somewhat nicer properties.

In our setup we are interested in constructing a heat kernel which is a fundamental solution for the heat equation and describes diffusion over time. In other words, the heat kernel, introduced in Definitions \ref{def: HK lp space} and \ref{def: HK Hilb space}, depends on two space variables and a time variable.
As such, the convolution of functions depending on two space variables and one time variable is defined as a composition of the "classical" convolution in the space variable, followed by a convolution in the time variable.

In order to construct the heat kernel,
we need the notion of somewhat more generalized time convolution, or a time-space convolution,
which is adopted to the setting of general measurable or Hilbert spaces.

%Therefore, in order to construct it from an initial approximation, we need to "convolve" two functions both in time and space variable.In order to distinguish this convolution from the convolution used e.g. in machine learning, the convolution introduced in this paper is called the generalized {\bf time} convolution.
\medskip

%In our setting, functions are defined on $\X\times\X\times[0,\infty)$, meaning they depend on two space variables and one time variable. Therefore,

\subsection{Generalized time convolution on measurable spaces}

Recall that throughout this section $\mu$ is a measure on the space $\X,\mathcal{B}$.

\begin{definition} \textbf{Time convolution on} $L^p$ \textbf{space}. \label{def. conv}
Assume the following.
\begin{itemize}
\item The functions $F_1,F_2: \X\times\X\times[0,\infty)\to \mathbb{R}$ (or $\mathbb{C}$) are such
that $F_1(x,\cdot;t)\in L^p(\X,\mu)$ and $F_2(\cdot,y;t)\in L^q(\X,\mu)$ for all $x,y\in\X$, $t\geq 0$ and $p,q\in[1,\infty]$ with $1/p+1/q=1$.
\item The functions $F_1(x,y;\cdot)$ and $F_2(x,y;\cdot)$ are integrable on the sets $[0, b)$, for all $b>0$ and $x,y\in\X$.
\end{itemize}
\noindent
Then we define the convolution of $F_1$ and $F_2$ as
\begin{equation}\label{eq. defn conv F1,2}
(F_1\ast F_2)(x,y;t):=\int\limits_{0}^t\lp\int\limits_{\X}F_1(x,z;t-\tau)F_2(z,y;\tau)\mu(dz)\rp d\tau.
\end{equation}
\end{definition}

Trivially, by combining the Fubini-Tonelli theorem with the H\"older inequality we get that
\begin{align*}
(F_1\ast F_2)(x,y;t)&=\int\limits_{0}^t\lp\int\limits_{\X}F_1(x,z;\tau)F_2(z,y;t-\tau)\mu(dz)\rp d\tau
\\&= \int\limits_{\X}\lp\int\limits_{0}^t F_1(x,z;\tau)F_2(z,y;t-\tau) d\tau\rp \mu(dz).
\end{align*}

\begin{remark}
It is straightforward to see that \eqref{eq. defn conv F1,2} generalizes the definition of the convolution for
infinite, edge-weighted and vertex-weighted graphs as given on page 8 of \cite{JKS26}.
\end{remark}

%\begin{remark}
%  Generalized time convolution is related to deep convolutional neural networks. Deep CNN possesses universality property, in a sense that it can be used to %approximate any continuous function to an arbitrary accuracy when the depth of the neural network is large enough, see \cite{Zh20}. Time convolution also %possesses similar approximation properties for small time, and also allows for propagation of the classical convolution in time. \textcolor{red}{maybe move some %of the text from the intro here and add more specific properties that are shared with our generalized time convolution...}
%\end{remark}

Let us now deduce some properties of the generalized convolution which will be needed for the construction of the heat kernel.
For $p\in [1,\infty]$ we denote by $||\cdot||_{p,\mu}$ the norm in the space $L^p(\mu)=L^p(\X,\mathcal{B},\mu)$.

\begin{lemma}
\label{lem:convolution bounds} Let $F_{1},F_{2}:\X\times \X\times\mathbb{R}_{>0}\to\mathbb{R}$
be as in Definition \ref{def. conv}.  For some $t_{0} > 0$, assume there exist constants $C_1, C_2$ and integers $k,\ell\geq 0$
such that for all $0<t<t_{0}$ and all $x,y \in \X$, we have that
$$
\left\|F_{1}(x,\cdot;t)\right\|_{p,\mu}\leq C_{1}h(x)t^{k}
  \,\,\,\,\,
  \text{\rm and}
  \,\,\,\,\,
  \left\|F_{2}(\cdot, y;t)\right\|_{q,\mu}\leq C_{2}t^{\ell},
$$
for some function $h$  of $x\in\X$.
Then for all $x,y\in \X$
\[
  |(F_{1}\ast F_{2})(x,y;t)|\leq C_{1}C_{2}h(x)\frac{k!\ell!}{(k+\ell+1)!}
  t^{k+\ell+1}
  \,\,\,\,\,
  \text{\rm for $0<t<t_{0}$.}
\]
%In particular, if $k=0$ then
%$$
%|(F_1\ast F_2)(v_1,v_2;t)|\leq C_1C_2|\X|\frac{t^{\ell+1}}{\ell+1}
%\,\,\,\,\,
%\text{\rm for all $v_{1},v_{2}\in \X$ and $0<t<t_{0}$.}
%$$
\end{lemma}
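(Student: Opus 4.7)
The plan is to estimate the convolution by taking absolute values under the integrals, applying H\"older's inequality to the spatial integral, using the hypothesized size estimates on the norms of $F_1$ and $F_2$, and finally evaluating a standard Beta-function integral in the time variable.

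First, I would start from the definition
\[
(F_1 \ast F_2)(x,y;t) = \int_0^t \int_\X F_1(x,z;t-\tau)\,F_2(z,y;\tau)\,\mu(dz)\,d\tau,
\]
and estimate the absolute value by bringing it inside both integrals (justified because the hypotheses of Definition \ref{def. conv} together with Fubini-Tonelli show that the integrand is absolutely integrable on $[0,t]\times\X$). This reduces the problem to bounding
\[
\int_0^t \left|\int_\X F_1(x,z;t-\tau)\,F_2(z,y;\tau)\,\mu(dz)\right| d\tau.
\]

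Next, for each fixed $\tau\in(0,t)$, I would apply the classical H\"older inequality for the dual pair $(p,q)$ to the inner spatial integral, obtaining
\[
\left|\int_\X F_1(x,z;t-\tau)\,F_2(z,y;\tau)\,\mu(dz)\right|
\le \|F_1(x,\cdot;t-\tau)\|_{p,\mu}\,\|F_2(\cdot,y;\tau)\|_{q,\mu}.
\]
Invoking the standing hypotheses (valid provided $0<t<t_0$, so that both $\tau$ and $t-\tau$ lie in $(0,t_0)$), the right-hand side is at most $C_1 C_2\, h(x)\,(t-\tau)^k \tau^\ell$.

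Finally, I would integrate in $\tau$ and recognize the resulting integral as a Beta function:
\[
\int_0^t (t-\tau)^k \tau^\ell\, d\tau = t^{k+\ell+1} B(k+1,\ell+1) = t^{k+\ell+1}\,\frac{k!\,\ell!}{(k+\ell+1)!}.
\]
Combining the estimates yields the claimed bound. The only substantive step is the H\"older inequality in the spatial variable; the rest is a calculation with the Beta integral. The main conceptual care lies in ensuring that the hypothesis $0<t<t_0$ is used correctly so that the norm bounds on $F_1(x,\cdot;t-\tau)$ and $F_2(\cdot,y;\tau)$ apply throughout the integration region, but this is immediate since $\tau,\,t-\tau\in(0,t)\subset(0,t_0)$.
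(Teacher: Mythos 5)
Your proposal is correct and follows essentially the same route as the paper: H\"older's inequality in the spatial variable, the hypothesized norm bounds, and the Beta integral $\int_0^t (t-\tau)^k\tau^\ell\,d\tau = t^{k+\ell+1}\,k!\,\ell!/(k+\ell+1)!$. The paper's proof is exactly this two-line computation.
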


\begin{proof}
From the H\" older inequality, we can write that
\begin{align*}
|(F_{1}\ast F_{2})(x,y;t)|&\leq \int\limits _{0}^{t} \left\| F_1(x,\cdot;t-r)\right\|_{p,\mu} \left\| F_2(\cdot, y;r)\right\|_{q,\mu}dr\\
&\leq C_1C_2h(x)\int_0^t r^k (t-r)^\ell\, dr=C_1C_2h(x)\frac{k!\ell!t^{k+\ell+1}}{(k+\ell+1)!},
\end{align*}
as claimed.
\end{proof}

Let $f=f(x,y;t):\X\times \X\times\mathbb{R}_{>0}\to\mathbb{R}$ be a function with the following properties.  For any  $T>0$, all $t\in (0,T]$,
and all arbitrary but $\mu$ almost all fixed $x,y\in \X$, the functions $f(\cdot, y;t): \X \to \mathbb{R} $ and $f(x,\cdot;t): \X \to \mathbb{R} $
belong to $L^p(\mu)\cap L^q(\mu)$.  Also, assume that, with the same quantification, the function $f(x,y;\cdot) :(0,T] \to \mathbb{R}$ is integrable.
For any positive integer $\ell$ and any such function $f$, we can inductively define the $\ell$-fold
convolution $(f)^{\ast\ell}(x,y;t)$ for $t\in(0,T]$; set $(f)^{\ast1}(x,y;t)=f(x,y;t)$
and for $\ell\geq2$ let
$$
(f)^{\ast\ell}(x,y;t):=\left(f\ast(f)^{\ast(\ell-1)}\right)(x,y;t),
$$
under additional assumption that $(f)^{\ast(\ell-1)}(\cdot,x;t)\in L^q(\mu)$ for all $t\in(0,T]$ and all $\ell \geq 2$.
%Note that in the last definition the order is not important because the convolution operator is associative.

With this notation we have the following lemma.

\begin{lemma}
\label{lem:convergence of the series} Let $f=f(x,y;t):\X\times \X\times\mathbb{R}_{>0}\to\mathbb{R}$.
Assume that for all $\mu$ almost all $x,y\in \X$ and all $t_{0} \in \mathbb{R}_{\geq0}$, the function $f(x,y;\cdot)$ is integrable on
 $[0,t_{0}]$ and $f(x,\cdot;t) \in L^1(\mu)$ for all $t\in [0,t_0]$.
Assume further that for all $t_{0} \in \mathbb{R}_{>0}$ there exists a constant $C$, depending only upon $t_0$, and integer $k\geq 0$ such that
$$
|f(x,y;t)|\leq Ct^{k}
\,\,\,\,\,
\text{\rm for $\mu$ almost all $x,y\in \X$ and $0<t<t_{0}$.}
$$
Then the series
\begin{equation}\label{eq: series over ell}
F(x,y;t):=\sum_{\ell=1}^{\infty}(-1)^{\ell}(f)^{\ast\ell}(x,y;t)
\end{equation}
converges absolutely and uniformly on every compact subset of $\X\times \X\times\mathbb{R}_{\geq0}$.
In addition, we have the following results:
\begin{enumerate}
\item \begin{equation}
\left(f\ast\left(\sum_{\ell=1}^{\infty}(-1)^{\ell}(f)^{\ast\ell}\right)\right)(x,y;t)=
\sum_{\ell=1}^{\infty}(-1)^{\ell}(f)^{\ast(\ell+1)}(x,y;t).\label{eq: convolution with inf sum}
\end{equation}
\item
\begin{equation}\label{eq:series_bound}
\sum_{\ell=1}^{\infty}\left|(f)^{\ast\ell}(x,y;t)\right|
=O(t^{k})
\,\,\,\,\,
\text{\rm as $t\to 0$;}
\end{equation}
where the implied constant is independent of $x,y\in \X$;
\item
%\textcolor{red}{$F(x,\cdot;t)\in L^1(\mu)$ for $\mu$ almost all $x\in\X$, $t\in[0,t_0]$ and - LS: I do not think we proved this below. Also, I thik all we need for the parametrix construction is the second statement here. I would delete this claim.}
    $F(\cdot,y;t)\in L^\infty(\mu)$ for $\mu$ almost all $y\in\X$, $t\in[0,t_0]$.
\end{enumerate}
\end{lemma}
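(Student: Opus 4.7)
The plan is to produce a pointwise upper bound on $(f)^{\ast\ell}(x,y;t)$ which is uniform in $x,y\in\X$ and decays rapidly enough in $\ell$ to force convergence of the series~\eqref{eq: series over ell}. I would proceed by induction on $\ell$, applying Lemma~\ref{lem:convolution bounds} at each step. From the hypothesis $|f(x,y;t)|\leq Ct^{k}$ one immediately obtains $\|f(x,\cdot;t)\|_{\infty,\mu}\leq Ct^{k}$ with weight $h(x)\equiv 1$. Coupling this (via Lemma~\ref{lem:convolution bounds} with $p=\infty$, $q=1$) with an $L^{1}$-bound on the second factor will yield
$$
|(f)^{\ast\ell}(x,y;t)|\;\leq\; A\,\frac{(Bt)^{(k+1)\ell - 1}}{((k+1)\ell - 1)!}
$$
for constants $A,B>0$ depending only on $C$, $k$, $t_{0}$ and the $L^{1}$ data implicit in the setup. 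The base case $\ell=1$ is the pointwise hypothesis, and in the inductive step one applies Lemma~\ref{lem:convolution bounds} with $F_{1}=f$ and $F_{2}=(f)^{\ast(\ell-1)}$, using that the $L^{1}$-norm of $(f)^{\ast(\ell-1)}(\cdot,y;t)$ is controlled by integrating the previous bound.

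With such an estimate in hand, all three conclusions follow essentially immediately. The factorial in the denominator dominates any geometric rate, so $\sum_{\ell}|(f)^{\ast\ell}(x,y;t)|$ is majorized by a convergent series whose bound is independent of $x,y$; this gives absolute and uniform convergence on every compact subset of $\X\times\X\times\mathbb{R}_{\geq 0}$. Isolating the $\ell=1$ term, which is bounded by $|f(x,y;t)|\leq Ct^{k}$, and bounding $\sum_{\ell\geq 2}|(f)^{\ast\ell}(x,y;t)|$ by $O(t^{2k+1})$, yields~\eqref{eq:series_bound} with implied constant independent of $x,y$. Uniform boundedness of $F(x,y;t)$ in $x$ then gives $F(\cdot,y;t)\in L^{\infty}(\mu)$, which is conclusion (3). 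For~\eqref{eq: convolution with inf sum}, the uniform majorant furnished by the same bound justifies, via dominated convergence, interchanging the convolution with $f$ and the infinite sum.

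The hard part will be the inductive estimate itself, specifically obtaining a bound on $(f)^{\ast\ell}$ that is \emph{uniform} in both $x$ and $y$. The hypotheses supply only the pointwise decay of $f$ (which controls $L^{\infty}$-norms uniformly) and the un-quantified condition $f(x,\cdot;t)\in L^{1}(\mu)$. To close the induction one must extract uniform-in-$(x,y)$ control of the $L^{1}$ norms arising at each step: either by invoking the standing setup that $f(\cdot,y;t),f(x,\cdot;t)\in L^{p}(\mu)\cap L^{q}(\mu)$ for conjugate $p,q$, by exploiting the weight function $h$ permitted in Lemma~\ref{lem:convolution bounds}, or by restricting to compact subsets $K\subset\X\times\X$ and absorbing $K$-dependent constants into the uniform bound on that compact set, which is all that the stated convergence claim requires.
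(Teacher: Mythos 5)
Your proposal follows essentially the same route as the paper: induction via Lemma~\ref{lem:convolution bounds} with the H\"older pair $(p,q)=(\infty,1)$ to get a factorial-in-$\ell$ decay of the iterated convolutions, then Weierstrass for uniform convergence and a dominated-convergence argument for the interchange in \eqref{eq: convolution with inf sum}, with \eqref{eq:series_bound} and conclusion (3) read off from the resulting majorant. The only quibble is your claimed exponent $(k+1)\ell-1$: since the hypotheses give no rate of decay in $t$ for the $L^{1}$-norms (only a pointwise $O(t^{k})$ bound, which you cannot integrate over $\X$ without $\mu(\X)<\infty$), each convolution gains only one power of $t$ from the time integral, yielding the paper's bound $C\|f\|_{1,\mu}^{\ell-1}t^{k+\ell-1}/(k+\ell-1)!$ --- still factorial in $\ell$, so all three conclusions go through exactly as you outline.
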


\begin{proof} Throughout the proof our discussion applies to $\mu$ almost all $x$ and $y$.  Let $A$ be an arbitrary compact subset of $\X\times \X\times\mathbb{R}_{\geq0}$.
Let $t_0>0$ be such that $A\subseteq \X\times \X\times [0,t_0]$.
We apply Lemma \ref{lem:convolution bounds}, with $p=\infty$, $q=1$ to get that
\begin{equation*}
  \label{eq:4}
\left|(f\ast f)(x,y;t)\right| \leq C \|f\|_{1,\mu}\frac{t^{k+1}}{(k+1)!}, \,\,\,\,\,
\text{\rm for all $x,y\in \X$ and $0<t<t_{0}$.}
\end{equation*}
Similarly, by induction for $\ell\geq 1$ we have the bound that
\begin{equation}
  \label{eq:mult_convolution_bound}
\left|(f^{\ast\ell})(x,y;t)\right|
\leq
C\|f\|_{1,\mu}^{\ell-1}\frac{t^{k+\ell-1}}{(k+\ell-1)!}, \,\,\,\,\,
\text{\rm for all $x,y\in \X$ and $0<t<t_{0}$.}
\end{equation}
The assertion regarding the convergence of (\ref{eq: series over ell}) now follows
from the Weierstrass criterion and the fact that $A\subseteq \X\times \X\times (0,t_0]$.

Fix $t>0$. The series (\ref{eq: series over ell}) converges absolutely.  When viewed as
a function of $x$, for any arbitrary but fixed $y$ and for any $0<t<t_0$, the series belongs to $L^{\infty}(\mu)$. Therefore,
\begin{equation}\label{eq. conv intergcange int sum}
\left(f\ast\left(\sum_{\ell=1}^{\infty}(-1)^{\ell}(f)^{\ast\ell}\right)\right)(x,y;t)= \int\limits_0^t \int\limits_{\X} f(x,z;t-r) \left( \sum_{\ell=1}^{\infty} (-1)^\ell (f)^{\ast \ell}(z,y;r)\right) \mu(dz)  dr.
\end{equation}
From the bound \eqref{eq:mult_convolution_bound}, when combined with the H\" older inequality with $p=1$ and $q=\infty$, we have
for an arbitrary $t\in(0,t_0)$ and $0<r<t$ that
\begin{align*}
\sum_{\ell=1}^{\infty} \int\limits_{\X} \left| f(x,z;t-r) (f)^{\ast \ell}(z,y;r)\right|\mu(dz)& \leq \sum_{\ell=1}^{\infty}\|f\|_{1,\mu} \cdot  C\|f\|_{1,\mu}^{\ell-1}\frac{t^{k+\ell-1}}{(k+\ell-1)!}\\ &= C\|f\|_{1,\mu}t^k \exp(t \|f\|_{1,\mu}).
\end{align*}
Hence we may interchange the sum over $\ell$ with the integral over $\X$ in \eqref{eq. conv intergcange int sum}. The above bound, combined with the Lebesgue Dominated Convergence Theorem, also proves statement (3), upon replacing $\ell$ by $\ell-1$.

By reasoning analogously, one easily shows that we may interchange the infinite sum over $\ell$ with the integral from $0$ to $t$ to deduce that
$$
\left(f\ast\left(\sum_{\ell=1}^{\infty}(-1)^{\ell}(f)^{\ast\ell}\right)\right)(x,y;t)= \sum_{\ell=1}^{\infty} (-1)^\ell \int\limits_0^t \int\limits_{\X} f(x,z;t-r)(f)^{\ast \ell}(z,y;r)\mu(z)  dr,
$$
which proves \eqref{eq: convolution with inf sum}.

Finally, the bound \eqref{eq:mult_convolution_bound}, and the fact that the series \eqref{eq: series over ell} converges absolutely on $\X\times \X\times (0,t_0]$, yields that
$$
\sum_{\ell=1}^{\infty}\left|(f)^{\ast\ell}(x,y;t)\right|\leq C\|f\|_{1,\mu}t^k \exp(t \|f\|_{1,\mu}),
$$
which proves \eqref{eq:series_bound}.
\end{proof}

Lemma \ref{lem:convergence of the series}, which is based on the H\"older conjugate pair $(1,\infty)$,
has an $L^2$-space version, which is the following Lemma.

\begin{lemma}\label{lemma 2,2, conv series}
  Let $f=f(x,y;t):\X\times \X\times\mathbb{R}_{>0}\to\mathbb{R}$.
Assume that for  $\mu$ almost all $x,y\in \X$ and all $t_{0} \in \mathbb{R}_{\geq0}$, the function $f(x,y;\cdot)$ is integrable on
 $[0,t_{0}]$ and $f(x,\cdot;t), f(\cdot, y;t) \in L^2(\mu)$ for all $t\in [0,t_0]$ and  $\mu$ almost all $x,y\in\X$.
Assume further that for all $t_{0} \in \mathbb{R}_{>0}$ there exist constants $C_1$, $C_2$ depending only upon $t_0$, $f$ and integer $k\geq 0$ such that
$$
||f(x,\cdot;t)||_{2,\mu}\leq C_1h(x)t^{k}
\,\,\,\,
\text{\rm and}
\,\,\,\,
 ||f(\cdot,y;t)||_{2,\mu}\leq C_2
\,\,\,\,\,
\text{\rm for  $\mu$ almost all $x,y\in \X$ and $0<t<t_{0}$,}
$$
for some function $h\in L^2(\mu)$ which is assumed to be bounded on compact subsets .
Then the series
\begin{equation}\label{eq: series over ell2}
F(x,y;t):=\sum_{\ell=1}^{\infty}(-1)^{\ell}(f)^{\ast\ell}(x,y;t)
\end{equation}
converges absolutely and uniformly on every compact subset of $\X\times \X\times\mathbb{R}_{\geq0}$.
In addition, we have the following results:
\begin{enumerate}
\item
\begin{equation}
\left(f\ast\left(\sum_{\ell=1}^{\infty}(-1)^{\ell}(f)^{\ast\ell}\right)\right)(x,y;t)=
\sum_{\ell=1}^{\infty}(-1)^{\ell}(f)^{\ast(\ell+1)}(x,y;t);\label{eq: convolution with inf sum2}
\end{equation}
\item
\begin{equation}\label{eq:series_bound2}
\sum_{\ell=1}^{\infty}\left\|(f)^{\ast\ell}(\cdot,y;t)\right\|_{2,\mu}
=O(t^{k})
\,\,\,\,\,
\text{\rm as $t\to 0$;}
\end{equation}
where the implied constant is independent of $x,y\in \X$;
\item $F(\cdot, y;t) \in L^2(\mu)$ for  $\mu$ almost all $y\in\X$ and $t\in[0,t_0]$.
\end{enumerate}
\end{lemma}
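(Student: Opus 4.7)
The plan is to mirror the proof of Lemma \ref{lem:convergence of the series}, now using the H\"older conjugate pair $(p,q)=(2,2)$ in place of $(1,\infty)$.

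First, I apply Lemma \ref{lem:convolution bounds} with $p=q=2$ and $F_1=F_2=f$, using the two hypothesized $L^2$-bounds. This yields the pointwise estimate
$$
|(f\ast f)(x,y;t)|\leq \frac{C_1 C_2\, h(x)}{k+1}\, t^{k+1}.
$$
To iterate inductively, I track the quantity $A_\ell(y;t):=\|(f)^{\ast\ell}(\cdot,y;t)\|_{2,\mu}$. Writing $(f)^{\ast\ell}=f\ast(f)^{\ast(\ell-1)}$ and applying Minkowski's integral inequality in the outer $L^2$-norm in $x$ together with Cauchy-Schwarz in the integration variable $z$, I obtain the recursion
$$
A_\ell(y;t)\leq M\int_0^t (t-r)^k A_{\ell-1}(y;r)\,dr,\qquad M:=C_1\|h\|_{2,\mu}.
$$
Starting from $A_1(y;t)\leq C_2$, a routine induction using the Beta identity $\int_0^t r^a(t-r)^b\,dr = t^{a+b+1}B(a+1,b+1)$ yields
$$
A_\ell(y;t)\leq \frac{C_2\, M^{\ell-1}(k!)^{\ell-1}\, t^{(\ell-1)(k+1)}}{((\ell-1)(k+1))!},
$$
uniformly in $y\in\X$.

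These bounds show that the series in (\ref{eq: series over ell2}) is dominated by a convergent exponential-type series, establishing absolute and uniform convergence on every compact subset of $\X\times\X\times\mathbb{R}_{\geq 0}$, together with the estimate (\ref{eq:series_bound2}). Statement (3), namely $F(\cdot,y;t)\in L^2(\mu)$, follows at once because the partial sums of (\ref{eq: series over ell2}) form a Cauchy sequence in $L^2(\mu)$ whose norm is uniformly controlled by $\sum_\ell A_\ell(y;t)$. For the identity (\ref{eq: convolution with inf sum2}) I argue as in the proof of Lemma \ref{lem:convergence of the series}: Cauchy-Schwarz applied to $|f(x,z;t-r)(f)^{\ast\ell}(z,y;r)|$ in the $z$-integral produces the bound $\|f(x,\cdot;t-r)\|_{2,\mu}\, A_\ell(y;r)$; multiplying by the $A_\ell$-bound and summing in $\ell$ yields a dominating integrand, after which Fubini-Tonelli and dominated convergence together justify swapping the infinite sum with the $z$- and $r$-integrals.

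The main technical obstacle is the asymmetry of the hypotheses. The bound in $x$ carries both the time factor $t^k$ and an $L^2$-majorant $h(x)$, while the bound in $y$ is only a uniform constant $C_2$. One is therefore forced to keep the $x$-variable on the side of Cauchy-Schwarz where $h\in L^2(\mu)$ can be integrated, and correspondingly to measure the $L^2$-norm of $(f)^{\ast\ell}$ in its first argument. Reversing these roles would introduce the integral $\int_\X C_2^2\,\mu(dx)$, which is infinite whenever $\mu(\X)=\infty$; the induction set up as above sidesteps this difficulty and is the crucial departure from the $(1,\infty)$ argument of the previous lemma.
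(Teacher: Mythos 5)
Your proposal is correct and follows essentially the same route as the paper: iterate Lemma \ref{lem:convolution bounds} with the conjugate pair $(2,2)$, obtain factorial-decay bounds on the $\ell$-fold convolutions (pointwise with the $h(x)$ factor, and in the $L^2_{x}$-norm after integrating $h$), and then use these as dominating bounds to justify the interchange of sum and integrals via Fubini--Tonelli. Your repackaging of the induction as a recursion on $A_\ell(y;t)=\|(f)^{\ast\ell}(\cdot,y;t)\|_{2,\mu}$, and your observation that the asymmetry of the hypotheses forces the $L^2$-norm to be taken in the first argument, are both consistent with (and slightly cleaner than) the paper's step-by-step reapplication of the convolution bound with updated constants.
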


\begin{proof} Throughout the proof our discussion applies to $\mu$ almost all $x$ and $y$.
Let $A$ be an arbitrary compact subset of $\X\times \X\times\mathbb{R}_{\geq0}$.
Let $t_0>0$ be such that $A\subseteq \X\times \X\times [0,t_0]$.
We apply Lemma \ref{lem:convolution bounds}, with $p,\, q=2$ and $\ell=0$ to get
\begin{equation*}
%  \label{eq:4}
\left|(f\ast f)(x,y;t)\right| \leq C_1C_2 h(x) \frac{t^{k+1}}{(k+1)!} \,\,\,\,\,
\text{\rm for all $x,y\in \X$ and $0<t<t_{0}$.}
\end{equation*}
In view of the fact that $h\in L^2(\mu)$, we see that $(f\ast f)(.,y;t)\in L^2(\mu)$ for all $y\in\X$. Moreover,
$$
\|(f\ast f)(\cdot,y;t)\|_{2,\mu}\leq C_1C_2 \|h\|_{2,\mu}  \frac{t^{k+1}}{(k+1)!}.
$$
Applying Lemma \ref{lem:convolution bounds}, with $p,\, q=2$, $\ell=k+1$ and constant $C_2$ equal to $   \frac{C_1C_2 \|h\|_{2,\mu}}{(k+1)!}$ we get
$$
\left|(f^{\ast 3})(x,y;t)\right|\leq C_1^2C_2\|h\|_{2,\mu} h(x)\frac{k!}{(2k+2)!}t^{2k+2}.
$$
Similarly, by induction for $\ell\geq 2$ we have the bound that
\begin{equation}
  \label{eq:mult_convolution_bound2}
\left|(f^{\ast\ell})(x,y;t)\right|
\leq
C_1^{\ell-1}C_2\|h\|_{2,\mu}^{\ell-2} h(x)\frac{(k!)^{\ell-2}}{((\ell-1)(k+1))!}t^{(\ell-1)(k+1)},
\end{equation}
for all $x,y\in \X$ and $0<t<t_{0}$.
The assertion regarding the convergence of (\ref{eq: series over ell2}) now follows
from the Weierstrass criterion and the fact that $A\subseteq \X\times \X\times (0,t_0]$.

Moreover, for $\ell\geq 2$ and all $y\in\X$ we have
\begin{equation}\label{eq. ell conv norm 2 bound}
\left\|(f^{\ast\ell})(\cdot,y;t)\right\|_{2,\mu}
\leq
C_1^{\ell-1}C_2\|h\|_{2,\mu}^{\ell-1} \frac{(k!)^{\ell-2}}{((\ell-1)(k+1))!}t^{(\ell-1)(k+1)}.
\end{equation}
This proves part (3) and also yields that
$$
\sum_{\ell=1}^{\infty}\left\|(f)^{\ast\ell}(\cdot,y;t)\right\|_{2,\mu}=O(t^k),
$$
as $t\downarrow 0$.

Fix $t>0$. The series (\ref{eq: series over ell2}) converges absolutely.  When viewed as
a function of $x$, for any arbitrary but fixed $y$ and for any $0<t<t_0$, in view of the bound \eqref{eq. ell conv norm 2 bound}, for any $\ell\geq 2$, the series belongs to $L^{2}(\mu)$. Therefore,
\begin{equation}\label{eq. conv intergcange int sum2}
\left(f\ast\left(\sum_{\ell=1}^{\infty}(-1)^{\ell}(f)^{\ast\ell}\right)\right)(x,y;t)= \int\limits_0^t \int\limits_{\X} f(x,z;t-r) \left( \sum_{\ell=1}^{\infty} (-1)^\ell (f)^{\ast \ell}(z,y;r)\right) \mu(dz)  dr.
\end{equation}
From the bound \eqref{eq. ell conv norm 2 bound} and the H\" older inequality with $p=q=2$, we have, for an arbitrary $t\in(0,t_0)$ and $0<r<t$ that
\begin{align*}
\sum_{\ell=1}^{\infty} \int\limits_{\X} \left| f(x,z;t-r) (f)^{\ast \ell}(z,y;r)\right|\mu(dz)& \leq C_2\sum_{\ell=1}^{\infty}C_1^{\ell}\|h\|_{2,\mu}^{\ell} \frac{(k!)^{\ell-1}}{(\ell(k+1))!}t^{\ell(k+1)}\\ &\leq \tilde C t^k \exp(tC_1\|h\|_{2,\mu} ).
\end{align*}
Therefore, we may interchange the sum over $\ell$ with the integral over $\X$ in \eqref{eq. conv intergcange int sum2}.
By reasoning analogously, one easily shows that we may interchange the infinite sum over $\ell$ with the integral from $0$ to $t$ to deduce that
$$
\left(f\ast\left(\sum_{\ell=1}^{\infty}(-1)^{\ell}(f)^{\ast\ell}\right)\right)(x,y;t)= \sum_{\ell=1}^{\infty} (-1)^\ell \int\limits_0^t \int\limits_{\X} f(x,z;t-r)(f)^{\ast \ell}(z,y;r)\mu(z)  dr,
$$
which proves \eqref{eq: convolution with inf sum2}.

With all this, the proof is complete.
\end{proof}

\section{Properties of the generalized time convolution on a Hilbert space}

\subsection{Generalized time convolution on a Hilbert space}

The generalized time convolution on any Hilbert space $\mathcal{H}$ of real valued functions on $\X$ with the inner product
$\langle \cdot, \cdot \rangle_{\mathcal{H}}$ is given by the following definition.

\begin{definition} \label{def. convHilb}\textbf{Time convolution on a Hilbert space} $\mathcal{H}$. Let $F_1,F_2: \X\times\X\times[0,\infty]$ be such that $F_1(x,\cdot;t), F_2(\cdot,y;t)\in \mathcal{H}$ for all $x,y\in\X$, $t\geq 0$ and such that the inner product
$$
\langle F_1(x,\cdot;\tau), F_2(\cdot,y;t-\tau) \rangle_{\mathcal{H}}
$$
is  integrable in $\tau$ on sets $[0, t]$, for all $t>0$, $x,y\in\X$. Then, time convolution on a Hilbert space $\mathcal{H}$ is defined as
\begin{equation}\label{eq. defn conv F1,2 Hilbert}
(F_1\ast F_2)(x,y;t):=\int\limits_{0}^t\langle F_1(x,\cdot;t-\tau),F_2(\cdot,y;\tau)\rangle_{\mathcal{H}} d\tau.
\end{equation}
\end{definition}

%\begin{remark}
 % The space $L^2(\mu)$ above can be replaced by any Hilbert space $\mathcal{H}$ and the proof remains exactly the same.
%\end{remark}

In this section we state lemmas which are Hilbert space $\mathcal{H}$ analogues of Lemmas \ref{lem:convolution bounds} and \ref{lemma 2,2, conv series}. Throughout this section we fix an orthonormal basis $\{\varphi_j\}_{j\geq 1}$, $\varphi_j:\X\to\mathbb{R}$
for the Hilbert space $\mathcal{H}$ of real-valued functions on $\X$.

\begin{lemma}
\label{lem:convolution bounds Hilb} Let $F_{1},F_{2}:\X\times \X\times\mathbb{R}_{>0}\to\mathbb{R}$
be as in Definition \ref{def. convHilb}.  Assume that $a_j(x,t):= \langle F_1(x,\cdot; t), \varphi_j(\cdot) \rangle_{\mathcal{H}}$ is
continuous in $t$ and that
$$
\sum_{j\geq 1}a_j(x,t)^2 =||F_1(x,\cdot,t)||_{\mathcal{H}}^2
$$
is also continuous in $t$ for all $x\in\X$. For some $t_{0} > 0$, assume there exist constants $C_1, C_2$ and integers $k,\ell\geq 0$
such that for all $0<t<t_{0}$ and all $x,y \in \X$, we have that $a_j(\cdot ;t)\in\mathcal{H}$, for all $j\geq 1$,  and
$$
\sum_{j\geq 1} ||a_j(\cdot,t)||^2_{\mathcal{H}}\leq C_{1}t^{k}
  \,\,\,\,\,
  \text{\rm and}
  \,\,\,\,\,
  \left\|F_{2}(\cdot, y;t)\right\|_{\mathcal{H}}^2\leq C_{2}t^{\ell} \,\, \text{\rm for all $y\in\X$,}
$$
where $||\cdot||_{\mathcal{H}}$ denotes the norm induced by the inner product on $\mathcal{H}$. Then, for all $y\in \X$ and $0<t<t_{0}$
we have that $F_1\ast F_2(\cdot,y; t)\in\mathcal{H}$ and
\[
||(F_{1}\ast F_{2})(\cdot,y;t)||_{\mathcal{H}}^2 \leq C_{1}C_{2}\frac{1}{(k+1)(\ell+1)}
  t^{k+\ell+2}
  \,\,\,\,\,
  \text{\rm for $0<t<t_{0}$.}
\]
\end{lemma}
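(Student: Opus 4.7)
The plan is to decompose both functions in the orthonormal basis $\{\varphi_j\}$ and reduce the claim to two successive Cauchy--Schwarz estimates, one over the basis index and one over the time interval. Writing
$$
F_1(x,\cdot;s) = \sum_{j\geq 1} a_j(x,s)\varphi_j(\cdot), \qquad b_j(y,\tau) := \langle F_2(\cdot, y;\tau), \varphi_j\rangle_{\mathcal{H}},
$$
Parseval's identity yields $\langle F_1(x,\cdot;t-\tau), F_2(\cdot,y;\tau)\rangle_{\mathcal{H}} = \sum_j a_j(x,t-\tau)b_j(y,\tau)$. Viewing the right-hand side as a function of $x$, I would define the candidate element
$$
g_\tau := \sum_{j\geq 1} b_j(y,\tau)\, a_j(\cdot, t-\tau) \in \mathcal{H},
$$
since each $a_j(\cdot, t-\tau) \in \mathcal{H}$ by hypothesis. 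The pointwise identification $g_\tau(x) = \langle F_1(x,\cdot;t-\tau), F_2(\cdot,y;\tau)\rangle_{\mathcal{H}}$ is then immediate from the expansion.

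The first Cauchy--Schwarz, applied to the series index $j$, gives
$$
\|g_\tau\|_{\mathcal{H}} \leq \sum_{j\geq 1} |b_j(y,\tau)|\,\|a_j(\cdot,t-\tau)\|_{\mathcal{H}} \leq \Big(\sum_j b_j(y,\tau)^2\Big)^{1/2} \Big(\sum_j \|a_j(\cdot,t-\tau)\|_{\mathcal{H}}^2\Big)^{1/2}.
$$
By Parseval the first factor equals $\|F_2(\cdot,y;\tau)\|_{\mathcal{H}} \leq \sqrt{C_2}\,\tau^{\ell/2}$, and the second factor is bounded by $\sqrt{C_1}\,(t-\tau)^{k/2}$ by hypothesis. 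This simultaneously proves that the series defining $g_\tau$ converges absolutely in $\mathcal{H}$ and gives the estimate $\|g_\tau\|_{\mathcal{H}} \leq \sqrt{C_1C_2}\,(t-\tau)^{k/2}\tau^{\ell/2}$. Applying Minkowski's integral inequality in $\mathcal{H}$ to the Bochner integral $\int_0^t g_\tau\,d\tau$ and then a second Cauchy--Schwarz on $[0,t]$ yields
$$
\|(F_1\ast F_2)(\cdot,y;t)\|_{\mathcal{H}} \leq \sqrt{C_1C_2}\int_0^t (t-\tau)^{k/2}\tau^{\ell/2}\,d\tau \leq \sqrt{C_1C_2}\,\Big(\frac{t^{k+1}}{k+1}\Big)^{1/2}\Big(\frac{t^{\ell+1}}{\ell+1}\Big)^{1/2},
$$
and squaring produces exactly the claimed bound.

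The main technical obstacle is justifying that $\tau\mapsto g_\tau$ is a Bochner integrable $\mathcal{H}$-valued map and that evaluation of the resulting Bochner integral at a point $x$ coincides with the pointwise integral in Definition \ref{def. convHilb}. The hypotheses are tailored to this: continuity in $t$ of each coefficient $a_j(x,t)$ together with continuity of $\sum_j a_j(x,t)^2 = \|F_1(x,\cdot;t)\|_{\mathcal{H}}^2$ force $t\mapsto F_1(x,\cdot;t)$ to be strongly continuous in $\mathcal{H}$ (weak convergence plus convergence of norms gives strong convergence), which together with the integrability assumption on $\langle F_1(x,\cdot;t-\tau),F_2(\cdot,y;\tau)\rangle_{\mathcal{H}}$ from Definition \ref{def. convHilb} delivers the required strong measurability of $\tau\mapsto g_\tau$. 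The finite bound just derived then certifies Bochner integrability and, a fortiori, membership $(F_1\ast F_2)(\cdot,y;t)\in\mathcal{H}$, completing the argument.
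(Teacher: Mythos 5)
Your proof is correct and follows essentially the same route as the paper's: expand both kernels in the orthonormal basis $\{\varphi_j\}$, apply Cauchy--Schwarz over the basis index to obtain $\|g_\tau\|_{\mathcal{H}}\le\sqrt{C_1C_2}\,(t-\tau)^{k/2}\tau^{\ell/2}$, and then apply Cauchy--Schwarz over $[0,t]$ to land on the identical constant $C_1C_2 t^{k+\ell+2}/((k+1)(\ell+1))$. The only difference is organizational --- you pass through the triangle inequality for the $\mathcal{H}$-valued integral, whereas the paper reaches the same bound by computing the Fourier coefficients $d_\ell(y)$ of the convolution via the doubly indexed $c_{j\ell}$ and summing $\sum_\ell d_\ell^2(y)$; both arguments rest on the same identification of the pointwise pairing with its $\mathcal{H}$-valued series and the same interchanges of sum and integral.
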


\begin{remark}
Note that the conditions posed on $F_1$ are equivalent to saying that it defines a Hilbert-Schmidt
type operator mapping from $\mathcal{H}$ to $\mathcal{H}$.
\end{remark}

\begin{proof} Let
$$
F_2(\cdot, y;\tau)=\sum_{j\geq 1} b_j(y;\tau)\varphi_j(\cdot)
$$
be an expansion of $F_2$ in terms of the basis of $\mathcal{H}$ where $0<\tau<t_0$ and $y\in\X$. Then, for $0<\tau<t<t_0$ and $x,y\in\X$ we have that
  $$
  \langle F_1(x,\cdot;t-\tau),F_2(\cdot,y;\tau)  \rangle_{\mathcal{H}}=\sum_{j\geq 1}a_j(x;t-\tau)b_j(y;\tau).
  $$
  We claim that, when viewed as a function of $x\in\X$ the pairing $\langle F_1(x,\cdot;t-\tau),F_2(\cdot,y;\tau)  \rangle_{\mathcal{H}}$
  is in $\mathcal{H}$ for all $0<\tau<t<t_0$ and $y\in\X$.
To see this, choose any $j, \ell\geq 1$ and let $c_{j\ell}(t-\tau):=\langle a_j(x;t-\tau),\varphi_\ell(x)  \rangle_{\mathcal{H}}$. Then
$$
 \langle \sum_{j\geq 1}a_j(\cdot;t-\tau)b_j(y;\tau) , \varphi_\ell(\cdot)  \rangle_{\mathcal{H}} = \sum_{j\geq 1} c_{j\ell}(t-\tau)b_j(y;\tau),
$$
so then
\begin{align*}
|| \langle F_1(x,\cdot;t-\tau),F_2(\cdot,y;\tau)  \rangle_{\mathcal{H}}||_{\mathcal{H}}^2 &=\sum_{\ell\geq 1}\left( \sum_{j\geq 1} c_{j\ell}(t-\tau)b_j(y;\tau) \right)^2 \\&\leq \sum_{\ell\geq 1} \left( \sum_{j\geq 1} c_{j\ell}(t-\tau)^2 \sum_{j\geq 1}b_j(y;\tau)^2 \right)\\&\leq \left\|F_{2}(\cdot, y;\tau)\right\|_{\mathcal{H}}^2 \sum_{j\geq 1}  \sum_{\ell\geq 1} c_{j\ell}(t-\tau)^2\\& = \left\|F_{2}(\cdot, y;\tau)\right\|_{\mathcal{H}}^2  \sum_{j\geq 1} ||a_j(\cdot,t-\tau)||^2_{\mathcal{H}}\\&\leq C_1C_2 \tau^\ell (t-\tau)^k.
\end{align*}
This proves that $\langle F_1(x,\cdot;t-\tau),F_2(\cdot,y;\tau)  \rangle_{\mathcal{H}}\in \mathcal{H}$.
Continuing, we claim that
\begin{equation}\label{eq. int of inner prod}
\int_0^t
\langle F_1(x,\cdot;t-\tau),F_2(\cdot,y;\tau)  \rangle_{\mathcal{H}} d\tau = \sum_{j\geq 1} \int_0^t a_j(x;t-\tau)b_j(y;\tau)d\tau.
\end{equation}
Indeed, it is immediate that
\begin{align*}
\sum_{j\geq 1} \int_0^t | a_j(x;t-\tau)b_j(y;\tau)| d\tau &=\int_0^t \left(\sum_{j\geq 1}| a_j(x;t-\tau)b_j(y;\tau)|\right) d\tau \\&\leq \int_0^t  ||F_1(x, \cdot;t)||_{\mathcal{H}} ||F_2(\cdot,y;t)||_{\mathcal{H}} <\infty,
\end{align*}
which justifies the interchange of the sum and integral in \eqref{eq. int of inner prod}.

Let
$$
d_{\ell}(y):=\langle \int_0^t
\langle F_1(x,\cdot;t-\tau),F_2(\cdot,y;\tau)  \rangle_{\mathcal{H}} d\tau, \varphi_\ell (x)\rangle_{\mathcal{H}}= \sum_{j\geq 1} \langle  \int_0^t  a_j(\cdot;t-\tau)b_j(y;\tau) d\tau ,  \varphi_\ell(\cdot) \rangle_{\mathcal{H}},
$$
where the last equality follows from \eqref{eq. int of inner prod}.
But, using the same argument as above to justify the interchange of the sum over $m$ and the integral, we have that
$$
\int_0^t  a_j(\cdot;t-\tau)b_j(y;\tau) d\tau = \sum_{m\geq 1} \varphi_m(\cdot) \int_0^t  c_{jm}(t-\tau)b_j(y;\tau) d\tau,
$$
hence
$$
d_{\ell}(y)=\sum_{j\geq 1}  \int_0^t  c_{j\ell}(t-\tau)b_j(y;\tau) d\tau.
$$
Using the Cauchy-Schwarz inequality, followed by the H\"older inequality we get
\begin{align*}
|d_\ell(y)|&\leq \int_0^t \sum_{j\geq 1}  |c_{j\ell}(t-\tau)b_j(y;\tau)| d\tau\\
&\leq \int_0^t \left(\sum_{j\geq 1}c_{j\ell}(t-\tau)^2 \right)^{1/2} \left(\sum_{j\geq 1} b_j(y;\tau)^2 \right)^{1/2} d\tau\\
&\leq \left( \int_0^t \left(\sum_{j\geq 1}c_{j\ell}(t-\tau)^2 \right)d\tau\right)^{1/2}
 \left( \int_0^t \left(\sum_{j\geq 1}b_j(y;\tau)^2 \right)d\tau\right)^{1/2}.
\end{align*}
Therefore
\begin{align*}
||(F_{1}\ast F_{2})(\cdot,y;t)||_{\mathcal{H}}^2 &=  \left\| \int_0^t
\langle F_1(x,\cdot;t-\tau),F_2(\cdot,y;\tau)  \rangle_{\mathcal{H}} d\tau\right\|_{\mathcal{H}}^2 \\&=\sum_{\ell\geq 1} d_\ell^2(y)
\\ &\leq \sum_{\ell\geq 1} \int_0^t \left(\sum_{j\geq 1}c_{j\ell}(t-\tau)^2 \right)d\tau \cdot C_2  \int_0^t \tau^\ell d\tau \\&= C_2\frac{t^{\ell+1}}{\ell+1}  \int_0^t \left( \sum_{j\geq 1} ||a_j(\cdot,t-\tau)||^2_{\mathcal{H}}\right) d\tau \\&\leq C_{1}C_{2}\frac{1}{(k+1)(\ell+1)}   t^{k+\ell+2},
\end{align*}
where we used that
$$
\sum_{j\geq 1}b_j(y;\tau)^2 =||F_2(\cdot,y;\tau)||_\mathcal{H}^2\leq C_2\tau^\ell.
$$
With all this, the proof of Lemma \ref{lem:convolution bounds Hilb} is complete.
\end{proof}

Next, we would like to define an $\ell$-fold generalized time convolution of a function $f$ on $\mathcal{H}$.
To do so, we assume that $f=f(x,y;t):\X\times \X\times\mathbb{R}_{>0}\to\mathbb{R}$ possesses the following property.  For any  $T>0$, all $t\in (0,T]$,
and all arbitrary but fixed $x,y\in \X$, the functions $f(\cdot, y;t): \X \to \mathbb{R} $ and $f(x,\cdot;t): \X \to \mathbb{R} $
belong to $\mathcal{H}$ and the function $f(x,y;\cdot) :(0,T] \to \mathbb{R}$ is integrable.
For any positive integer $\ell$ and any such function $f$, we can inductively define the $\ell$-fold
convolution $(f)^{\ast\ell}(x,y;t)$ for $t\in(0,T]$ by setting $(f)^{\ast1}(x,y;t)=f(x,y;t)$
and, for $\ell\geq2$ we denote that
$$
(f)^{\ast\ell}(x,y;t):=\left(f\ast(f)^{\ast(\ell-1)}\right)(x,y;t),
$$
under additional assumption that $(f)^{\ast(\ell-1)}(\cdot,y;t)\in \mathcal{H}$ for all $t\in(0,T]$ and all $\ell \geq 2$.

With this definition, we have the following Hilbert space analogue of Lemma \ref{lemma 2,2, conv series}.

\begin{lemma}\label{lemma 2,2, conv series Hilb}
  Let $f=f(x,y;t):\X\times \X\times\mathbb{R}_{>0}\to\mathbb{R}$.
Assume that for all $x,y\in \X$ and all $t_{0} \in \mathbb{R}_{\geq0}$, we have $f(x,\cdot;t), f(\cdot, y;t) \in \mathcal{H}$ for all $t\in [0,t_0]$ and all $x,y\in\X$. Let  $a_j(x,t):= \langle f(x,\cdot; t), \varphi_j(\cdot) \rangle_{\mathcal{H}}$ be continuous in $t$ variable and such that $\sum_{j\geq 1}a_j(x,t)^2 =||f(x,\cdot,t)||_{\mathcal{H}}^2$ is also continuous in $t$ for all $x\in\X$. Assume further that  there exist constants $C_1, C_2$ and integer $k\geq 0$
such that for all $0<t<t_{0}$ and all $x,y \in \X$, we have that $a_j(\cdot ;t)\in\mathcal{H}$, for all $j\geq 1$ and
$$
\sum_{j\geq 1} ||a_j(\cdot,t)||^2_{\mathcal{H}}\leq C_{1}
  \,\,\,\,\,
  \text{\rm and}
  \,\,\,\,\,
  \left\|f(\cdot, y;t)\right\|_{\mathcal{H}}^2\leq C_{2}t^{k+2}.
$$
Then the $\ell$-fold
convolution $(f)^{\ast\ell}(x,y;t)$ for $t\in(0,t_0]$ is well defined for all $\ell\geq 2$ and
\begin{equation}\label{eq:series_bound2 Hilb}
\sum_{\ell=1}^{\infty}\left\|(f)^{\ast\ell}(\cdot,y;t)\right\|_{\mathcal{H}}^2
=O(t^{k})
\,\,\,\,\,
\text{\rm as $t\to 0$,}
\end{equation}
where the implied constant is independent of $x,y\in \X$.
Therefore, the series  \eqref{eq: series over ell2}
%\begin{equation}\label{eq: series over ell2}
%F(x,y;t):=\sum_{\ell=1}^{\infty}(-1)^{\ell}(f)^{\ast\ell}(x,y;t)
%\end{equation}
converges in the Hilbert space norm for $(x,y;t)$ in a compact subset of $\X\times \X\times\mathbb{R}_{\geq0}$. Moreover,
\eqref{eq: convolution with inf sum2} holds true and $F(\cdot, y;t) \in \mathcal{H}$ for all $y\in\X$ and $t\in[0,t_0]$.
\end{lemma}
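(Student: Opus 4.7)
The plan is to use Lemma \ref{lem:convolution bounds Hilb} inductively as a machine which accepts a norm bound on $(f)^{\ast(\ell-1)}$ and returns one on $(f)^{\ast\ell}=f\ast(f)^{\ast(\ell-1)}$. The hypotheses imposed on $f$ in the present statement are precisely those required of the first factor $F_{1}$ in Lemma \ref{lem:convolution bounds Hilb}, with the parameter ``$k$'' there specialized to $0$ (since here $\sum_{j}\|a_j(\cdot,t)\|_\mathcal{H}^{2}\leq C_{1}$), while the only condition required of $F_{2}$ is a norm bound, which the induction will propagate. Concretely, by induction on $\ell\geq 1$ I would show that $(f)^{\ast\ell}(\cdot,y;t)\in\mathcal{H}$ for every $y\in\X$ and $t\in(0,t_{0}]$, with
\[
\|(f)^{\ast\ell}(\cdot,y;t)\|_\mathcal{H}^{2}\leq B_{\ell}\,t^{k+2\ell},
\qquad
B_{\ell}:=\frac{C_{1}^{\ell-1}C_{2}}{(k+3)(k+5)\cdots(k+2\ell-1)},
\]
using the empty-product convention $B_{1}=C_{2}$. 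The base case $\ell=1$ is the hypothesis. For the induction step, applying Lemma \ref{lem:convolution bounds Hilb} with $F_{1}=f$ and $F_{2}=(f)^{\ast(\ell-1)}$ (so that ``$\ell$'' in that lemma is $k+2(\ell-1)$ and ``$C_{2}$'' is $B_{\ell-1}$) produces exactly the recursion $B_{\ell}=C_{1}B_{\ell-1}/(k+2\ell-1)$.

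Summing the resulting bounds gives
\[
\sum_{\ell=1}^{\infty}\|(f)^{\ast\ell}(\cdot,y;t)\|_\mathcal{H}^{2}
\leq C_{2}\,t^{k+2}\sum_{m=0}^{\infty}\frac{(C_{1}t^{2})^{m}}{(k+3)(k+5)\cdots(k+2m+1)},
\]
and the right-hand series converges for every $t$ by the ratio test and is bounded uniformly for $t\in[0,t_{0}]$, independently of $y$; this establishes \eqref{eq:series_bound2 Hilb}. Taking square roots and applying the triangle inequality yields $\sum_{\ell}\|(f)^{\ast\ell}(\cdot,y;t)\|_\mathcal{H}<\infty$ uniformly on compact subsets of $\X\times\X\times\mathbb{R}_{\geq 0}$, so the series \eqref{eq: series over ell2} converges absolutely in $\mathcal{H}$-norm, and the limit $F(\cdot,y;t)$ lies in $\mathcal{H}$ as a norm-convergent sum of elements of $\mathcal{H}$.

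To establish \eqref{eq: convolution with inf sum2}, I would mimic the interchange argument in the proof of Lemma \ref{lemma 2,2, conv series}. Since $\sum_{\ell}(-1)^{\ell}(f)^{\ast\ell}(\cdot,y;\tau)$ converges in $\mathcal{H}$ uniformly for $\tau\in[0,t]$, continuity of the inner product allows one to pull the infinite sum past the pairing with $f(x,\cdot;t-\tau)$. To exchange the resulting sum with $\int_{0}^{t}d\tau$, Cauchy--Schwarz furnishes the bound
\[
\bigl|\langle f(x,\cdot;t-\tau),(f)^{\ast\ell}(\cdot,y;\tau)\rangle_\mathcal{H}\bigr|
\leq \|f(x,\cdot;t-\tau)\|_\mathcal{H}\,\sqrt{B_{\ell}}\,\tau^{(k+2\ell)/2},
\]
whose right-hand side is summable in $\ell$ and integrable in $\tau$, so Fubini--Tonelli legitimizes the interchange. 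The main obstacle will be the bookkeeping through the induction, since the constants in Lemma \ref{lem:convolution bounds Hilb} shift with each application; once the recursion for $B_{\ell}$ is correctly identified, the convergence and interchange steps proceed by routine applications of dominated convergence and Fubini's theorem.
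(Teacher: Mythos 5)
Your proposal is correct and follows essentially the same route as the paper: an induction that feeds the norm bound on $(f)^{\ast(\ell-1)}$ back into Lemma \ref{lem:convolution bounds Hilb} with $F_{1}=f$ (whose ``$k$'' is $0$), sums the resulting estimates to get \eqref{eq:series_bound2 Hilb}, and then uses continuity of the inner product plus Fubini--Tonelli for \eqref{eq: convolution with inf sum2}. The only difference is bookkeeping: you retain the full exponent $t^{k+2}$ from the hypothesis at every step and obtain $\|(f)^{\ast\ell}\|_{\mathcal{H}}^{2}\leq B_{\ell}t^{k+2\ell}$, which is marginally sharper than the paper's $t^{k+2(\ell-1)}$ bound but yields the same $O(t^{k})$ conclusion.
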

\begin{proof}
We apply Lemma \ref{lem:convolution bounds Hilb}
with $F_1=F_2=f$ and $\ell =k $ to deduce that $f\ast f(\cdot, y;t) \in \mathcal{H}$ for all $t\in[0,t_0]$ and all $y\in\X$.
Furthermore,
$$
||(f\ast f)(\cdot,y;t)||_{\mathcal{H}}^2 \leq C_{1}C_{2}\frac{1}{(k+1)}
  t^{k+2}
  \,\,\,\,\,
  \text{\rm for $0<t<t_{0}$.}
$$
Next, we apply Lemma \ref{lem:convolution bounds Hilb}
with $F_1=f$, $F_2=f\ast f$ and $\ell =k+2 $ to deduce that $ f^{\ast (3)}(\cdot, y;t) \in \mathcal{H}$ for all $t\in[0,t_0]$ and all $y\in\X$ and that
$$
|| f^{\ast 3}(\cdot,y;t)||_{\mathcal{H}}^2 \leq C_{1}^2 C_{2}\frac{1}{(k+1)(k+3)}
  t^{k+4}
  \,\,\,\,\,
  \text{\rm for $0<t<t_{0}$.}
$$
Proceeding inductively, for $\ell\geq 3$ we get
$$
|| f^{\ast \ell}(\cdot,y;t)||_{\mathcal{H}}^2 \leq C_{1}^{\ell-1} C_{2}\frac{1}{(k+1)\ldots (k+2\ell-3)}
  t^{k+2(\ell -1)}
  \,\,\,\,\,
  \text{\rm for $0<t<t_{0}$.}
$$
Therefore,
$$
\sum_{\ell=1}^{\infty}\left\|(f)^{\ast\ell}(\cdot,y;t)\right\|_{\mathcal{H}}^2\leq t^k C_2 \sum_{\ell=0}^{\infty}\frac{(C_1t^2)^{\ell}}{\ell!}=O(t^{k})
\,\,\,\,\,
\text{\rm as $t\to 0$,}
$$
which proves the first claim. The Cauchy-Schwartz inequality implies that
$$
F(\cdot, y;t):=\sum_{\ell=1}^{\infty}(-1)^{\ell}(f)^{\ast\ell}(\cdot, y;t)\in\mathcal{H}
$$
for all $0<t<t_{0}$ and
$$
|| F(\cdot, y;t)||_{\mathcal{H}}^2 \leq \widetilde{C} t^k,
$$
for $0<t<t_0$ and some constant $\widetilde C$.
Finally, continuity of the inner product and the bounds for the $\ell$- fold convolution yield that
   \begin{align*}\label{eq. conv intergcange int sum2}
\left(f\ast\left(\sum_{\ell=1}^{\infty}(-1)^{\ell}(f)^{\ast\ell}\right)\right)(x,y;t)&= \int\limits_0^t \langle f(x,\cdot;t-r), \left( \sum_{\ell=1}^{\infty} (-1)^\ell (f)^{\ast \ell}(\cdot,y;r)\right) \rangle_{\mathcal{H}}   dr\\&= \sum_{\ell=1}^{\infty} (-1)^\ell \int\limits_0^t \langle f(x,\cdot;t-r),(f)^{\ast \ell}(\cdot,y;r)\rangle_{\mathcal{H}}dr.
\end{align*}
With all this, the proof of Lemma \ref{lemma 2,2, conv series Hilb} is complete.
\end{proof}

\section{Parametrix construction of the heat kernel in three different settings}\label{sec. param. constr}

In this section we describe the construction of the heat kernel using the parametrix approach in three settings. We will first describe the construction
when the underlying space is a measurable space $(\X,\mathcal{B},\mu)$ and the conjugate pairs $p,q$ in the parametrix construction are
$(1,\infty)$ and then $(2,2)$, respectively. Then, we will describe the construction when the underlying space is a separable
Hilbert space $\mathcal{H}$.

A parametrix is a function that serves as an initial approximation of the heat kernel. It is chosen based on certain desirable properties
which are reasonably general. Often,
a parametrix is simpler to compute than the exact heat kernel. In essence, the parametrix construction provides a powerful and flexible framework
for studying heat kernels in various mathematical settings, particularly when dealing with complex geometric structures.
One then uses iterative techniques or formulas to improve the accuracy of the parametrix, gradually approaching the true heat kernel.
As such, the heat kernel can be viewed as a type of ``fixed-point'' through a functional-analytic process.

\medskip
\subsection{A $(1,\infty)$ parametrix construction of the heat kernel}

The heat operator $L$ is defined as
$$
L:= \Delta+\partial_t
$$
where $\Delta$ is the Laplacian defined in Section \ref{sec. setup} and $\partial_t$ is the first partial derivative in time-variable $t$.
Of course, the normalized/probabilistic heat operator $\tilde L$ is defined similarly using
 the normalized/probabilistic Laplacian.  As usual, the quantification \it all $x \in \X$ \rm means \it $\mu$-almost all $x \in \X$. \rm

\begin{definition}\label{def. parametrix1} (\textbf{A $(1,\infty)$ parametrix.})
Let $k\geq 0$ be an integer. A \emph{$(1,\infty)$ parametrix} $H$ of order $k$  for the heat operator $L$ on $(\X,\mu)$ is any continuous function $H=H(x,y;t):\X\times \X\times (0,\infty)$
which is smooth in the time variable $t\in(0,\infty)$, $\mu$-integrable in each space variable, and satisfies the following properties.
\begin{enumerate}
\item The function $L_{x}H(x,y;t)$ extends to a continuous function on  $\X\times \X\times [0,\infty)$.

 \item  For all  $x\in \X$ and $t>0$ we have that $\Delta_{x}H(x,\cdot;t)$ and $\partial_t  H(x,\cdot;t)$ are in $L^1(\mu)$.
\item For any $t_0>0$ there exists a constant $C=C(t_0)$, depending only on
$t_0$, such that
$$
|L_{x}H(x,y;t)| \leq C(t_0)t^k
\,\,\,\,\,
\text{\rm for $t \in (0, t_0]$}
\,\,\,
\text{\rm and all}
\,\,\,
\text{\rm $x,y\in \X$.}
$$
\item For all $x\in \X$ and all $f\in L^\infty(\mu)$
\begin{equation}\label{eq:dirac property of heat kernel}
\lim\limits_{t \rightarrow 0^+}
\int\limits_{\X}H(x,y;t)f(y)\mu(dy)=f(x)
\,\,\,\,\,\text{\rm for all $x\in\X$.}
\end{equation}
\end{enumerate}
\end{definition}
Note that the third assumption on the parametrix $H$ implies that $L_{x}H(x,\cdot;t)\in L^1(\mu)$. Moreover, we use the subscript $x$ in the heat $L_x$ to denote that the Laplacian is acting in the first spatial variable.

\begin{remark}
  A $(1,\infty)$ parametrix for the heat operator $\tilde L$ is defined in the analogous way by
  replacing operators $\Delta$ and $L$ by $\tilde \Delta$ and $\tilde L$, respectively.
\end{remark}

\begin{lemma}
\label{lem: L of convol 1, infty}
Let $H$ be a $(1,\infty)$ parametrix for the heat operator on $(\X,\mu)$ of any order $k$.
Let $f=f(x,y;t):\X\times \X\times\mathbb{R}_{> 0}\to\mathbb{R}$
be a continuous function in $t$ for all $x,y\in \X$. Assume further that for all $t>0$ and all $y\in\X$, the function
$f(\cdot,y;t)$, when viewed as a function on $\X$, belongs to $L^\infty(\mu)$.  Then
$$
L_{x}( H\ast f)(x,y;t)=f(x,y;t)+(L_{x}H \ast f)(x,y;t)
$$
for all $x,y\in \X$ and $t\in\mathbb{R}_{>0}$.
\end{lemma}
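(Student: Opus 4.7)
The plan is to split the heat operator $L_x=\Delta_x+\partial_t$ and treat the two pieces separately, then recombine. Writing
$$
(H\ast f)(x,y;t)=\int_0^t\Phi(t,\tau)\,d\tau,\qquad \Phi(t,\tau):=\int_{\X} H(x,z;t-\tau)\,f(z,y;\tau)\,\mu(dz),
$$
I would differentiate in $t$ via the Leibniz rule for an integral with variable upper limit, obtaining
$$
\partial_t(H\ast f)(x,y;t)=\lim_{\tau\to t^-}\Phi(t,\tau)+\int_0^t\!\int_{\X}\bigl(\partial_tH\bigr)(x,z;t-\tau)\,f(z,y;\tau)\,\mu(dz)\,d\tau.
$$
The passage of $\partial_t$ inside the two integrations is licensed by Lemma~\ref{rem:Lp_differentiation}, the $L^1$-hypothesis on $\partial_t H(x,\cdot;t)$ from item (2) of Definition~\ref{def. parametrix1}, and the uniform $L^\infty$-bound on $f(\cdot,y;\tau)$. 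The second term on the right is exactly $((\partial_tH)\ast f)(x,y;t)$.

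The key step is to identify the boundary term $\lim_{\tau\to t^-}\Phi(t,\tau)=\lim_{s\to 0^+}\int_{\X}H(x,z;s)f(z,y;t-s)\mu(dz)$ with $f(x,y;t)$. I would use the decomposition
$$
\int_{\X}H(x,z;s)f(z,y;t-s)\mu(dz)=\int_{\X}H(x,z;s)f(z,y;t)\mu(dz)+\int_{\X}H(x,z;s)\bigl[f(z,y;t-s)-f(z,y;t)\bigr]\mu(dz).
$$
The first integral converges to $f(x,y;t)$ as $s\to 0^+$ by the Dirac-delta property in item (4) of Definition~\ref{def. parametrix1}, applied to the $L^\infty(\mu)$ function $g(z)=f(z,y;t)$. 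The error term is then controlled by combining the uniform-in-$s$ boundedness of $\int_{\X}|H(x,z;s)|\mu(dz)$ (obtained from the same Dirac property applied to the constant function $1$) with the continuity of $f$ in the time variable.

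For the spatial part, I would bring $\Delta_x$ inside the time and space integrals by Fubini and the hypothesis $\Delta_xH(x,\cdot;t)\in L^1(\mu)$ from item (2), yielding $\Delta_x(H\ast f)(x,y;t)=((\Delta_xH)\ast f)(x,y;t)$. Adding this to the time-derivative identity gives $L_x(H\ast f)=f+L_xH\ast f$, as claimed. The main obstacle is the careful treatment of the boundary term in the Leibniz step: since the continuity of $f$ in $t$ is pointwise and not uniform in the $z$-variable, one cannot apply dominated convergence directly to $\Phi(t,\tau)$ as $\tau\to t^-$, and it is precisely the split above combined with the Dirac-delta approximation property that allows the boundary contribution to be unambiguously identified as $f(x,y;t)$.
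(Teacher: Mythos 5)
Your proposal follows essentially the same route as the paper's proof: the spatial part is handled by pushing $\Delta_x$ through the convolution via Fubini--Tonelli and H\"older, and the time derivative is computed by the Leibniz rule with the boundary term identified as $f(x,y;t)$ through the Dirac-delta property (4) of the parametrix. Your treatment of the boundary term is in fact slightly more careful than the paper's, which writes $f(z,y;t)$ in the boundary integrand directly without spelling out the splitting into the $f(z,y;t)$ piece plus an error controlled by the continuity of $f$ in $t$.
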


\begin{proof} As stated we are assuming that $f(\cdot, y;t)\in L^{\infty}(\mu)$ and $H(x,\cdot;t), \, \Delta_{x}H(x,\cdot;t)\in L^1(\mu)$,
for all $(x,y;t) \in \X\times \X \times \mathbb{R}_{>0}$.  Therefore, the convolutions $H\ast f$ and $(\Delta_{x}H)\ast f$ are well defined. Moreover,
\begin{align}\notag
\Delta_{x}(H\ast f)(x,y;t)&= \int\limits_{\X}\left(\int\limits _{0}^{t}\int\limits_{\X} \left(H(x,w;t-r) - H(z,w;t-r)\right)f(w,y;r)\mu(d w)dr\right)\rho_x(dz)
    \\ &= \int\limits _{0}^{t} \int\limits_{\X} \left(\int\limits_{\X} \left(H(x,w;t-r) - H(z,w;t-r)\right)\rho_x(dz)\right) f(w,y;r)\mu( dw)  dr \notag\\
    &= (\Delta_{x}H)\ast f (x,y;t).\label{eq. change of delta and ast}
\end{align}
In the above integral the application of Fubini-Tonelli theorem is justified by the fact that the H\"older inequality implies that
\begin{align*}
\int\limits_{\X}\int\limits _{0}^{t}\int\limits_{\X} &\left|H(x,w;t-r) - H(z,w;t-r)\right|\cdot |f(w,y;r)|\mu( dw)dr\rho_x(dz) \\ & \leq \int\limits_{\X}\int\limits _{0}^{t} \left(\| H(x,\cdot;t-r)\|_{L^1(\mu)}+ \| H(z,\cdot;t-r)\|_{L^1(\mu)}\right)  \| f(\cdot,y;r)\|_{L^\infty(\mu)}dr\rho_x(dz).
\end{align*}
The above integral is finite, because of assumption (2) posed on the parametrix $H$ and the fact that $\rho_x(\X)<\infty$.
Therefore, we have that
\begin{align}\label{eq:heat_of_convolution1}
L_{x}(H\ast f)(x,y;t)&=
    \frac{\partial}{\partial t}(H\ast f)(x,y;t)
    + \Delta_{x}(H\ast f)(x,y;t)\\ &=
  \frac{\partial}{\partial t}(H\ast f)(x,y;t)+  \left( (\Delta_{x}H)\ast f\right)(x,y;t)\nonumber.%\label{eq:heat_of_convolution}
\end{align}
The function
$$
\int\limits_{\X}H(x,z;t-r)f(z,y;r) \mu(dz)
$$
is continuous in the time variable.  To see this, note that the integrand is continuous in $t$. Then, one can apply
the H\"older inequality and the assumptions on the parametrix $H$ and $f$.
As a result, we can apply the
Leibniz integration formula.  Upon doing so, we get that
the first term on the right hand side of \eqref{eq:heat_of_convolution1} is equal to
\begin{align} \label{eq.deriv in t1}
\lim_{\epsilon\to0^+}\frac{\partial}{\partial t}&\int\limits _{0}^{t-\epsilon}\int\limits_{\X} H(x,z;t-r)f(z,y;r) \mu(dz)dr\\&
=\lim_{\epsilon\to0^+}\int\limits_{\X}H(x,z;\epsilon)f(z,y;t)\mu(dz)+\int\limits _{0}^{t} \frac{\partial}{\partial t} \int\limits_{\X}
H(x,z;t-r)f(z,y;r) \mu(dz)dr.\notag
\end{align}
The two assumptions that $f(\cdot,y;t)\in L^\infty(\mu)$ and that $\frac{\partial}{\partial t}  H(x,\cdot;t)\in L^1(\mu)$ combine,
together with the H\"older inequality and the theorem on differentiation of the Lebesgue integral with respect to parameter, to yield that
$$
\frac{\partial}{\partial t} \int\limits_{\X}
H(x,z;t-r)f(z,y;r) \mu(dz)= \int\limits_{\X}\frac{\partial}{\partial t} H(x,z;t-r)f(z,y;r) \mu(dz).
$$
In view of assumption (4) on the parametrix $H$, we get from \eqref{eq.deriv in t1} that
$$
\frac{\partial}{\partial t}\int\limits _{0}^{t}\int\limits_{\X}H(x,z;t-r)f(z,y;r) \mu(dz)dr= f(x,y;t) +
\left(\frac{\partial}{\partial t}H\ast f\right)(x,y;t).
$$
Therefore,
\begin{align*}
L_{x}(H\ast f)(x,y;t) &=
  \frac{\partial}{\partial t}(H\ast f)(x,y;t)+   (\Delta_{x}H\ast f)(x,y;t)
\\&=f(x,y;t)+\left(\frac{\partial}{\partial t}H\ast f\right)(x,y;t)+(\Delta_{x}H\ast f)(x,y;t)
\\&=f(x,y;t)+(L_{x}H\ast f)(x,y;t),
\end{align*}
as claimed.
\end{proof}

With all this, we now can state and prove the main theorem in this subsection.

\begin{theorem}
\label{thm: formula for heat kernel} Let $H$ be a $(1,\infty)$ parametrix of
order $k\geq0$ for the heat operator on $\X$.  For $x,y\in \X$ and $t\in \mathbb{R}_{\geq 0}$, let
\begin{equation}\label{eq:Neuman_series}
F(x,y;t):=\sum_{\ell=1}^{\infty}(-1)^{\ell}
(L_{x}H)^{\ast\ell}(x,y;t).
\end{equation}
Then the Neumann series \eqref{eq:Neuman_series}
converges absolutely and uniformly on every compact subset of $\X\times \X\times\mathbb{R}_{\geq 0}$.
Furthermore, the $L^1(\mu)$ heat kernel $K_{\X}$ on $\X$
associated to graph Laplacian $\Delta_{x}$ is given by
\begin{equation}\label{eq:heat_kernel_parametrix}
K_{\X}(x,y;t)=H(x,y;t)+(H\ast F)(x,y;t)
\end{equation}
and
\begin{equation}\label{eq. small t asympt}
(H\ast F)(x,y;t)
= O(t^{k+1})
\,\,\,\,\,
\text{\rm as $t \rightarrow 0^+$,}
\end{equation}
for all $x,y\in\X$.
\end{theorem}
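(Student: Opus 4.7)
The plan is to verify, one step at a time, the three defining properties of the $L^1$-heat kernel in Definition~\ref{def: HK lp space} for the candidate $K_{\X} := H + H\ast F$, after first establishing convergence of the Neumann series~\eqref{eq:Neuman_series}. Each ingredient is driven directly by a lemma from Section~4 or Section~6, and nothing beyond the parametrix axioms of Definition~\ref{def. parametrix1} is needed.

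First I apply Lemma~\ref{lem:convergence of the series} with $f = L_x H$: properties~(2) and~(3) of a $(1,\infty)$ parametrix supply exactly its hypotheses, namely $L_x H(x,\cdot;t)\in L^1(\mu)$ and $|L_x H(x,y;t)|\le C t^k$ on $(0,t_0]$. Its conclusions give absolute and uniform convergence of $F$ on compact subsets of $\X\times\X\times\mathbb{R}_{\ge 0}$, the pointwise bound $\sum_\ell |(L_x H)^{\ast\ell}(x,y;t)| = O(t^k)$, and $F(\cdot,y;t)\in L^\infty(\mu)$. With this $L^\infty$-control in hand, I invoke Lemma~\ref{lem:convolution bounds} with $(p,q)=(1,\infty)$, $F_1=H$ and $F_2=F$, using a locally uniform bound on $\|H(x,\cdot;s)\|_{1,\mu}$ for small $s$ that follows from property~(4) of the parametrix tested against $\chi_\X$. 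The outcome is $(H\ast F)(x,y;t)=O(t^{k+1})$, which is precisely~\eqref{eq. small t asympt}.

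For the heat equation, I apply Lemma~\ref{lem: L of convol 1, infty} with $f=F$; its continuity-in-$t$ and $L^\infty$-in-the-first-variable hypotheses both follow from the previous step. This yields
$$
L_x(H\ast F)(x,y;t) = F(x,y;t) + (L_x H \ast F)(x,y;t).
$$
Part~(1) of Lemma~\ref{lem:convergence of the series} re-indexes the last term as $\sum_{\ell\ge 1}(-1)^\ell (L_x H)^{\ast(\ell+1)}$, and this sum, added to the defining series of $F$, telescopes to $-L_x H$. Hence $L_x(H\ast F) = -L_x H$, so $L_x K_\X = L_x H + L_x(H\ast F) = 0$. For the Dirac initial condition, splitting the pairing against any $f\in L^\infty(\mu)$ into the $H$ and $H\ast F$ contributions, the first converges to $f(x)$ by property~(4) of $H$, while the second is controlled by the uniform estimate $|(H\ast F)(x,y;t)| = O(t^{k+1})$ combined with Fubini, and so vanishes as $t\to 0^+$; the membership $K_\X(\cdot,y;t)\in L^1(\mu)$ follows similarly from the $L^1$-integrability of $H$ together with a Fubini representation of $H\ast F$.

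The main obstacle is the heat-equation step: one must move $\partial_t$ and $\Delta_x$ inside both the time-convolution integral and the infinite Neumann sum simultaneously. Lemma~\ref{lem: L of convol 1, infty} and part~(1) of Lemma~\ref{lem:convergence of the series} were engineered to perform precisely these interchanges, so the substantive work reduces to checking that $F$ meets their hypotheses, which the convergence step supplies. The Dirac step also needs Fubini in an $L^1$--$L^\infty$ pairing, but the explicit $O(t^{k+1})$ bound on $H\ast F$ makes this routine.
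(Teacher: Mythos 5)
Your proposal is correct and follows essentially the same route as the paper: Lemma~\ref{lem:convergence of the series} applied to $f=L_xH$ for the Neumann series and the $O(t^k)$ and $L^\infty$ control, Lemma~\ref{lem:convolution bounds} with $(p,q)=(1,\infty)$ for the $O(t^{k+1})$ bound on $H\ast F$, Lemma~\ref{lem: L of convol 1, infty} plus part~(1) of Lemma~\ref{lem:convergence of the series} for the telescoping that kills $L_x\tilde H$, and the small-time bound for the Dirac property. The only cosmetic difference is your justification of the small-$t$ control on $\|H(x,\cdot;t)\|_{1,\mu}$ via property~(4) tested against $\chi_\X$ (which bounds $\int H\,d\mu$ rather than $\int|H|\,d\mu$); the paper simply invokes the assumed $\mu$-integrability of $H$, so this does not constitute a divergence in method.
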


\begin{proof}
Set
$$
\tilde{H}(x,y;t) := H(x,y;t)+(H\ast F)(x,y;t).
$$
From the assumptions on the $(1,\infty)$ parametrix $H$ it follows that the function $L_xH$ satisfies assumptions of Lemma \ref{lem:convergence of the series}. Therefore, the Neumann series \eqref{eq:Neuman_series} converges absolutely and uniformly on every compact subset of $\X\times \X\times\mathbb{R}_{\geq 0}$; it is continuous function of the time variable and belongs to $L^{\infty}(\mu)$, when viewed as a function of the first spatial variable $x$, for all $y\in\X$. Therefore,  $\tilde{H}(x,y;\cdot):[0,\infty)\to\mathbb{R}$ is smooth on $(0,\infty)$, continuous at $t=0$ and $\tilde{H}(\cdot,y;t)\in L^1(\mu)$ for all $y\in\X$, $t>0$.

 We want to show that
\begin{equation}\label{eq:heat_kernel_criteria 0}
L_{x}\tilde{H}(x,y;t)=0
\,\,\,\,\,
\text{\rm and}
\,\,\,\,\,
\lim_{t\to0^+}\int\limits_{\X}\tilde H(x,y;t)f(y)\mu(dy)=f(x)
\,\,\,\,
\text{\rm for all $x\in\X, f\in L^\infty(\mu)$.}
\end{equation}
By Lemma \ref{lem:convergence of the series}, the series $F(x,y;t)$
has order $O(t^{k})$ as $t \rightarrow 0^+$.  Since $H$ is in $L^1(\mu)$,
we can apply Lemma \ref{lem:convolution bounds} with $F_1=H$ and $F_2=F$ to obtain the asymptotic bound that
$$
(H\ast F)(x,y;t)= O(t^{k+1})
\,\,\,\,\,
\text{\rm as $t \rightarrow 0^+$.}
$$
Therefore, for any $f\in L^\infty(\mu)$
$$
\lim_{t\to 0^+} \int_{\X}\tilde{H}(x,y;t)f(y)\mu(dy) = \lim_{t\to 0^+}\int_{\X}H (x,y;0)f(y)\mu(dy)=f(x),
$$
by the assumption (4) for the $(1,\infty)$ parametrix $H$.

It remains to prove the vanishing of $L_{x}\tilde{H}$ in \eqref{eq:heat_kernel_criteria 0}. For this, we can apply Lemma \ref{lem: L of convol 1, infty} combined with Lemma \ref{lem:convergence of the series} to get that

\begin{align*}
L_{x}\tilde{H}(x,y;t)&=
L_{x}H(x,y;t)+L_{x}(H\ast F)(x,y;t)
\\&=
L_{x}H(x,y;t)+\sum_{\ell=1}^{\infty}(-1)^{\ell}(L_{x}
{H})^{\ast\ell}(x,y;t)+(L_{x}{H})*\left(\sum_{\ell=1}^{\infty}(-1)^{\ell}(L_{x}{H})^{\ast\ell}\right)(x,y;t)\\&=
L_{x}H(x,y;t)+\sum_{\ell=1}^{\infty}(-1)^{\ell}(L_{x}
{H})^{\ast\ell}(x,y;t)+\sum_{\ell=1}^{\infty}(-1)^{\ell}(L_{x}{H})^{\ast(\ell+1)}(x,y;t)\\&
=0.
\end{align*}
To be precise, in the above calculations we used
that absolute convergence of the series defining $F(x,y;t)$
in order to change the order of summation.  With all this, the proof is complete.
\end{proof}

\subsection{A $(2,2)$ parametrix construction of the heat kernel}

\begin{definition}\label{def. parametrix2} (\textbf{A $(2,2)$ parametrix.})
Let $k\geq 0$ be an integer. A \emph{($2,2$) parametrix} $H$ order $k$  for the heat operator $L$ on $(\X,\mu)$
is any continuous function $H=H(x,y;t):\X\times \X\times \mathbb{R}_{>0}$
which is smooth on $\mathbb{R}_{>0}$ in time variable $t$, belongs to $L^2(\mu)$ in each space variable, and satisfies the following properties.
\begin{enumerate}
\item The function $L_{x}H(x,y;t)$ extends to a continuous function of $t \in [0,\infty)$ and for all $x,y\in\X$.

 \item  For all  $x\in \X$ and $t\in \mathbb{R}_{>0}$, we have that $\Delta_{x}H(x,\cdot;t)$ and $\partial_t  H(x,\cdot;t)$
 are in $L^2(\mu)$. Moreover, $||H(x,\cdot;t)||_{2,\mu} $ is bounded by a $\rho_x$-integrable function for all $t\in \mathbb{R}_{>0}$.
 \footnote{Note that every bounded function is $\rho_x$-integrable.}
\item For any $t_0\in \mathbb{R}_{>0}$ there exists a constant $C=C(t_0)$, depending only on
$t_0$, and a function $h\in L^2(\mu)$ such that
$$
||L_{x}H(x,\cdot;t)||_{2,\mu} \leq C(t_0)h(x)t^k
\,\,\,\,\,
\text{and}\quad ||L_{x}H(\cdot,y;t)||_{2,\mu} \leq C(t_0)
$$
for $t \in (0, t_0]$ and all $x,y\in \X$.
\item For all $x\in \X$, and all $f\in L^2(\mu)$
\begin{equation}\label{eq:dirac property of heat kernel2}
\lim\limits_{t \rightarrow 0^+}\int\limits_{\X}H(x,y;t)f(y)\mu(dy)=f(x)
\,\,\,\,\,
\text{\rm for all $x\in\X$.}
\end{equation}
\end{enumerate}
\end{definition}

\begin{lemma}
\label{lem: L of convol}
Let $H$ be a $(2,2)$ parametrix for the heat operator on $(\X,\mu)$ of any order.
Let $f=f(x,y;t):\X\times \X\times\mathbb{R}_{> 0}\to\mathbb{R}$
be a continuous function in $t$ for all $x,y\in \X$. Assume further that for all $t\in \mathbb{R}_{> 0}$ the function
$f(\cdot,y;t)$ when viewed as a function on $\X$, for all $y\in\X$ belongs to $L^2(\mu)$.  Then
$$
L_{x}( H\ast f)(x,y;t)=f(x,y;t)+(L_{x}H \ast f)(x,y;t)
$$
for all $x,y\in \X$ and $t\in\mathbb{R}_{>0}$.
\end{lemma}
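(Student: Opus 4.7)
The plan is to follow the architecture of the proof of Lemma \ref{lem: L of convol 1, infty} verbatim, with the H\"older conjugate pair $(1,\infty)$ replaced throughout by $(2,2)$. First I would check that both convolutions $H\ast f$ and $(\Delta_{x}H)\ast f$ are well defined: since $H(x,\cdot;t)$ and $\Delta_{x}H(x,\cdot;t)$ lie in $L^{2}(\mu)$ by assumption (2) of Definition \ref{def. parametrix2}, while $f(\cdot,y;t)\in L^{2}(\mu)$ by hypothesis, the Cauchy--Schwarz inequality bounds the inner $\mu$-integrals, and continuity in $t$ handles integrability over $[0,t]$.

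The first substantive step is to commute $\Delta_{x}$ with the $(r,z)$-integration in $H\ast f$, i.e.\ to establish
$$
\Delta_{x}(H\ast f)(x,y;t)=(\Delta_{x}H\ast f)(x,y;t).
$$
Writing $\Delta_{x}$ as integration against $\rho_{x}$ of the difference $H(x,\cdot;t-r)-H(z,\cdot;t-r)$ and invoking Fubini--Tonelli, the required absolute integrability reduces, via Cauchy--Schwarz on the innermost $\mu$-integral, to finiteness of
$$
\int_{\X}\int_{0}^{t}\bigl(\|H(x,\cdot;t-r)\|_{2,\mu}+\|H(z,\cdot;t-r)\|_{2,\mu}\bigr)\|f(\cdot,y;r)\|_{2,\mu}\,dr\,\rho_{x}(dz).
$$
This is finite because $\rho_{x}(\X)<\infty$ and $\|H(x,\cdot;t)\|_{2,\mu}$ is $\rho_{x}$-integrable by the second clause of assumption (2) in Definition \ref{def. parametrix2}.

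Next I would differentiate
$$
(H\ast f)(x,y;t)=\int_{0}^{t}\int_{\X}H(x,z;t-r)f(z,y;r)\,\mu(dz)\,dr
$$
in $t$. Continuity in $t$ of the inner integral follows from continuity of $H$ together with Cauchy--Schwarz and dominated convergence, so the Leibniz rule, applied with a one-sided limit at the singular endpoint $r=t$, produces a boundary contribution $\lim_{\epsilon\to 0^{+}}\int_{\X}H(x,z;\epsilon)f(z,y;t)\,\mu(dz)$ that equals $f(x,y;t)$ by the Dirac-delta property (4) of the $(2,2)$ parametrix applied to $f(\cdot,y;t)\in L^{2}(\mu)$, plus an interior derivative. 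For the latter, the $L^{p}$ differentiation lemma (Lemma \ref{rem:Lp_differentiation}) with $p=q=2$, using $\partial_{t}H(x,\cdot;t)\in L^{2}(\mu)$ from assumption (2), permits differentiation under the integral sign and yields $(\partial_{t}H\ast f)(x,y;t)$.

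Putting the pieces together gives $L_{x}(H\ast f)=\partial_{t}(H\ast f)+\Delta_{x}(H\ast f)=f+(\partial_{t}H\ast f)+(\Delta_{x}H\ast f)=f+(L_{x}H\ast f)$, as claimed. The main technical obstacle relative to the $(1,\infty)$ case is that one no longer has an $L^{\infty}$ bound on $f$ to dominate the differences $H(x,\cdot;t)-H(z,\cdot;t)$ uniformly in $z$; instead one must lean on the $\rho_{x}$-integrability of $\|H(x,\cdot;t)\|_{2,\mu}$ to close the Fubini--Tonelli step that commutes $\Delta_{x}$ with the convolution integral. This is precisely the reason the $\rho_x$-integrability condition is built into Definition \ref{def. parametrix2}.
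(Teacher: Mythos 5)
Your proposal is correct and follows essentially the same route as the paper, which itself simply defers to the proof of Lemma \ref{lem: L of convol 1, infty} with the H\"older pair $(1,\infty)$ replaced by $(2,2)$. You have in fact supplied more detail than the paper does, correctly identifying that the $\rho_x$-integrability clause in Definition \ref{def. parametrix2}(2) is exactly what replaces the $L^\infty$ bound on $f$ in closing the Fubini--Tonelli step for $\Delta_x(H\ast f)=(\Delta_x H)\ast f$.
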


\begin{proof} As stated $f(\cdot, y;t)\in L^{2}(\mu)$ and $H(x,\cdot;t), \, \Delta_{x}H(x,\cdot;t)\in L^2(\mu)$
for all $(x,y;t) \in \X\times \X \times \mathbb{R}_{> 0}$.  Therefore, the convolutions $H\ast f$ and $(\Delta_{x}H)\ast f$ are well defined,
and we have that
\begin{align}\notag
L_{x}(H\ast f)(x,y;t)&=
    \frac{\partial}{\partial t}(H\ast f)(x,y;t)
    + \Delta_{x}(H\ast f)(x,y;t)\\ &=
  \frac{\partial}{\partial t}(H\ast f)(x,y;t)+  \left( (\Delta_{x}H)\ast f\right)(x,y;t),\label{eq:heat_of_convolution}
\end{align}
where the second equation is proved analogously as \eqref{eq. change of delta and ast}, namely by
applying the Fubini-Tonelli theorem, which is justified by the H\"older inequality.

The rest of the proof is analogous to the proof of Lemma \ref{lem: L of convol 1, infty}, so we refer the reader to that
proof for further details.
\end{proof}

With all this, we now can state the main theorem in this subsection.

\begin{theorem}
\label{thm: formula for heat kernel 2} Let $H$ be a $(2,2)$ parametrix of
order $k\geq0$ for the heat operator on $\X$.
Then the Neumann series \eqref{eq:Neuman_series}
converges absolutely and uniformly on every compact subset of $\X\times \X\times\mathbb{R}_{\geq 0}$.
Furthermore, the $L^2$-heat kernel $K_{\X}$ on $\X$
associated to graph Laplacian $\Delta_{x}$ is given by \eqref{eq:heat_kernel_parametrix} and the asymptotic \eqref{eq. small t asympt} holds true.
%\begin{equation}\
%H_{\X}(x,y;t)=H(x,y;t)+(H\ast F)(x,y;t)
%\end{equation}
%and
%$$
%(H\ast F)(x,y;t)
%= O(t^{k+1})
%\,\,\,\,\,
%\text{\rm as $t \rightarrow 0$.}
%$$
\end{theorem}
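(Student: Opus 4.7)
The strategy is to parallel the proof of Theorem \ref{thm: formula for heat kernel}, with Lemma \ref{lemma 2,2, conv series} and Lemma \ref{lem: L of convol} playing the roles of Lemma \ref{lem:convergence of the series} and Lemma \ref{lem: L of convol 1, infty}, respectively. Setting
$$
\tilde{H}(x,y;t) := H(x,y;t) + (H \ast F)(x,y;t),
$$
the task is to verify that $\tilde{H}$ meets the three requirements of Definition \ref{def: HK lp space} with $p=2$.

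The first step is to apply Lemma \ref{lemma 2,2, conv series} to the function $f := L_x H$. The bounds $\|L_x H(x,\cdot;t)\|_{2,\mu} \leq C(t_0) h(x) t^k$ and $\|L_x H(\cdot, y;t)\|_{2,\mu} \leq C(t_0)$ from property (3) of Definition \ref{def. parametrix2}, together with the $L^2$-integrability supplied by property (2), fit the hypotheses of that lemma exactly. The lemma then yields the absolute uniform convergence of the Neumann series \eqref{eq:Neuman_series} on compact subsets of $\X\times\X\times\mathbb{R}_{\geq 0}$, the identity \eqref{eq: convolution with inf sum2}, the inclusion $F(\cdot,y;t)\in L^2(\mu)$, and the uniform estimate $\|F(\cdot,y;t)\|_{2,\mu} = O(t^k)$ as $t \to 0^+$.

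Next, the small-time asymptotic \eqref{eq. small t asympt} follows from Lemma \ref{lem:convolution bounds} with $p=q=2$, $F_1 = H$ and $F_2 = F$, where the bounding function is the $\rho_x$-integrable majorant of $\|H(x,\cdot;t)\|_{2,\mu}$ supplied by property (2) of the parametrix. A parallel Cauchy--Schwarz argument shows that $(H\ast F)(\cdot,y;t)\in L^2(\mu)$, and hence $\tilde H(\cdot,y;t)\in L^2(\mu)$. Continuity of $\tilde H(x,y;\cdot)$ at $t=0$ and smoothness on $(0,\infty)$ follow from property (1) of the parametrix together with the convergence estimates above. The initial-data identity
$$
\lim_{t \to 0^+}\int_{\X} \tilde{H}(x,y;t) f(y) \mu(dy) = f(x), \qquad f \in L^2(\mu),
$$
reduces to property (4) of the parametrix, since the $O(t^{k+1})$ contribution from $H\ast F$ vanishes in the limit by yet another application of Cauchy--Schwarz.

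The final and most delicate ingredient is the verification that $L_x \tilde H = 0$. Applying Lemma \ref{lem: L of convol} with $f = F$---whose $L^2$-integrability in the first variable was established above---yields
$$
L_x (H\ast F)(x,y;t) = F(x,y;t) + (L_x H \ast F)(x,y;t),
$$
and the convolution identity \eqref{eq: convolution with inf sum2} rewrites the second term as $\sum_{\ell\geq 1}(-1)^\ell (L_xH)^{\ast(\ell+1)}(x,y;t)$. Inserting the definition of $F$ as a Neumann series and re-indexing produces the telescoping cancellation
$$
L_x \tilde H = L_x H + \sum_{\ell\geq 1}(-1)^\ell(L_xH)^{\ast\ell} + \sum_{\ell\geq 1}(-1)^\ell(L_xH)^{\ast(\ell+1)} = 0.
$$
The main obstacle is justifying the interchanges of summation, integration and differentiation hidden in the application of Lemma \ref{lem: L of convol} to the infinite series $F$, where the $L^1$--$L^\infty$ duality exploited in the $(1,\infty)$ proof is unavailable; this is precisely what the uniform $L^2$-bounds of Lemma \ref{lemma 2,2, conv series} are designed to supply.
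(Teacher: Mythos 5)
Your proposal is correct and follows essentially the same route as the paper's own proof: the same decomposition $\tilde H = H + H\ast F$, with Lemma \ref{lemma 2,2, conv series} applied to $L_xH$ for the convergence and $O(t^k)$ bound on $F$, Lemma \ref{lem:convolution bounds} with $p=q=2$ for the $O(t^{k+1})$ asymptotic and condition (1), and Lemma \ref{lem: L of convol} for the telescoping cancellation $L_x\tilde H=0$. No substantive differences.
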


\begin{proof}
Set
$$
\tilde{H}(x,y;t) := H(x,y;t)+(H\ast F)(x,y;t).
$$
By Lemma \ref{lemma 2,2, conv series}, the series $F(x,y;t)$ defined in \eqref{eq:Neuman_series}
converges uniformly and absolutely. Moreover, according to \eqref{eq:series_bound2}, as a function of the first variable, it belongs to $L^2(\mu)$ and has order $O(t^{k})$ as $t \rightarrow 0$.  Since $H$ is in $L^2(\mu)$ as a function of the first variable, this, combined with Lemma \ref{lem:convolution bounds} with $p=q=2$ implies that $\tilde{H}(x,y;t)$ satisfies condition (1) in the definition of the $L^2$ heat kernel.

Next, we want to show that
\begin{equation}\label{eq:heat_kernel_criteria}
L_{x}\tilde{H}(x,y;t)=0
\,\,\,\,\,
\text{\rm and}
\,\,\,\,\,
\lim_{t\rightarrow 0^+}\int\limits_{\X}\tilde H(x,y;0)f(y)\mu(dy)=f(x)
\,\,\,\,\,
\text{\rm for all $x\in\X$ and $f\in L^2(\mu)$.}
\end{equation}
Combining Lemma \ref{lemma 2,2, conv series} with Lemma \ref{lem:convolution bounds} we get the asymptotic bound that
$$
(H\ast F)(x,y;t)= O(t^{k+1})
\,\,\,\,\,
\text{\rm as $t \rightarrow 0^+$.}
$$
Therefore, for any $f\in L^2(\mu)$
$$
\lim_{t\to 0^+} \int_{\X}\tilde{H}(x,y;t)f(y)\mu(dy) =\lim_{t\to 0^+} \int_{\X}H(x,y;t)f(y)\mu(dy)=f(x).
$$
It remains to prove the vanishing of $L_{x}\tilde{H}$ in \eqref{eq:heat_kernel_criteria}.
For this, we can apply Lemma \ref{lem: L of convol}  and proceed analogously as in the proof of  Theorem \ref{thm: formula for heat kernel} to get that

\begin{align*}
L_{x}\tilde{H}(x,y;t)&=
L_{x}H(x,y;t)+L_{x}(H\ast F)(x,y;t)
\\&=
L_{x}H(x,y;t)+\sum_{\ell=1}^{\infty}(-1)^{\ell}(L_{x}
{H})^{\ast\ell}(x,y;t)+(L_{x}{H})*\left(\sum_{\ell=1}^{\infty}(-1)^{\ell}(L_{x}{H})^{\ast\ell}\right)(x,y;t)\\
&
=0,
\end{align*}
which completes the proof.
%To be precise, in the above calculations we used
%that absolute convergence of the series defining $F(x,y;t)$
%in order to change the order of summation.  This completes the proof.
\end{proof}

%\begin{remark}
 % According to our assumptions on the parametrix $H$, from Lemma \ref{lemma 2,2, conv series} it follows that for an arbitrary $t_0>0$, the
 % function $F$ as defined by \eqref{eq:Neuman_series} is in $L^2(\mu)$ for all $y\in\X$ and  $t\in[0,t_0]$.
 % Therefore, the assumptions on the parametrix $H$ when combined with Lemma \ref{lem:convolution bounds} with $p=q=2$ implies
 % that $H_\X(\cdot, y,t)\in L^2(\mu)$ for all $y\in\X$ and  $t\in[0,t_0]$.
%\end{remark}

\subsection{Parametrix construction of the heat kernel in a Hilbert space}

In this section we describe the parametrix construction of the heat kernel on a Hilbert space $\mathcal{H}$ of functions $f:\X\to\R$. Throughout this section the space $\mathcal{H}$ is fixed. We denote by $\{\varphi_j\}_{j\geq 1}$ its orthonormal basis.

\begin{definition}\label{def. parametrixH} (\textbf{An $\mathcal{H}$ parametrix.})
Let $k\geq 0$ be an integer. An \emph{$\mathcal{H}$ parametrix} $H$ order $k$  for the heat operator $L$ on $(\X,\mu)$ is any continuous function $H=H(x,y;t):\X\times \X\times \mathbb{R}_{>0}$
which is smooth on $\mathbb{R}_{>0}$ in time variable $t$, belongs to $\mathcal{H}$ in each space variable, and satisfies the following properties.
\begin{enumerate}
\item The function $L_{x}H(x,y;t)$ extends to a continuous function of $t$ on  $[0,\infty)$, for all $x,y\in\X$.

 \item  For all  $x\in \X$, $t\geq0$ we have that $\Delta_{x}H(x,\cdot;t)\in\mathcal{H}$ and for all $t\in \mathbb{R}_{>0}$, we have that $\partial_t  H(x,\cdot;t)\in\mathcal{H}$.
\item Let $$a_j(x;t):= \langle L_xH(x,\cdot;t),\varphi_j(\cdot) \rangle_{\mathcal{H}},\,\, f(x,y;t):= \sum_{j\geq 1}a_j(x;t)\varphi_j(y),$$
    $x,y\in\X$, $t>0$. Then, $a_j(\cdot ;t)\in\mathcal{H}$, and $a_j(x,t)$ is continuous in $t$ variable for all $j\geq 1$, $x\in\X$ and such that $\sum_{j\geq 1}a_j(x,t)^2 =||f(x,\cdot,t)||_{\mathcal{H}}^2$ is also continuous in $t$ for all $x\in\X$. Moreover,  for all $j\geq 1$ and for any $t_0>0$ there exists a constant $C=C(t_0)$, depending only on
$t_0$, such that
$$
\sum_{j\geq 1} ||a_j(\cdot,t)||^2_{\mathcal{H}}\leq C
  \,\,\,\,\,
  \text{\rm and}
  \,\,\,\,\,
  \left\|f(\cdot, y;t)\right\|_{\mathcal{H}}^2\leq Ct^{k}.
$$
for $t \in (0, t_0]$ and all $y\in \X$.
\item For all $x\in \X$, and all $g\in \mathcal{H}$
\begin{equation}\label{eq:dirac property of heat kernel hilb}
\lim\limits_{t\rightarrow 0^+}\langle H(x,\cdot;t),g(\cdot)\rangle_{\mathcal{H}}=g(x).
\end{equation}
\end{enumerate}
\end{definition}

The construction of the heat kernel on $\Hil$ begins with the following lemma.

\begin{lemma}
\label{lem: L of convolHilb S}
Let $H$ be an $\mathcal{H}$ parametrix for the heat operator on $(\X,\mu)$ of any order.
Let $f=f(x,y;t):\X\times \X\times\mathbb{R}_{> 0}\to\mathbb{R}$
be a continuous function in $t$ for all $x,y\in \X$. Assume further that for all $t\in \mathbb{R}_{>0}$ the function
$f(\cdot,y;t)$ when viewed as a function on $\X$, for all $y\in\X$ belongs to $\mathcal{H}$.  Then
$$
L_{x}( H\ast f)(x,y;t)=f(x,y;t)+(L_{x}H \ast f)(x,y;t)
$$
for all $x,y\in \X$ and $t\in\mathbb{R}_{>0}$.
\end{lemma}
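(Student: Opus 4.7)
The proof mirrors those of Lemmas \ref{lem: L of convol 1, infty} and \ref{lem: L of convol}, with the H\"older pairing replaced by Cauchy--Schwarz in $\mathcal{H}$. Since by hypothesis all of $H(x,\cdot;t)$, $\Delta_x H(x,\cdot;t)$, $\partial_t H(x,\cdot;t)$, and $f(\cdot,y;t)$ lie in $\mathcal{H}$, the three Hilbert-space convolutions $H\ast f$, $(\Delta_x H)\ast f$, and $(\partial_t H)\ast f$ defined by \eqref{eq. defn conv F1,2 Hilbert} are all well-defined. Writing $L_x = \partial_t + \Delta_x$, the lemma reduces to establishing the two identities
$$
\Delta_x(H\ast f)(x,y;t) = \bigl((\Delta_x H)\ast f\bigr)(x,y;t)
\quad\text{and}\quad
\partial_t(H\ast f)(x,y;t) = f(x,y;t) + \bigl((\partial_t H)\ast f\bigr)(x,y;t).
$$

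For the first identity, the plan is to expand $\Delta_x$ via Definition \ref{def:Laplacian} as $\Delta_x g(x) = c(x)g(x) - \int_\X g(z)\rho_x(dz)$, applied to the function $x \mapsto (H\ast f)(x,y;t)$. Using the fixed orthonormal basis to write $\langle H(z,\cdot;s), f(\cdot,y;r)\rangle_\mathcal{H} = \sum_{j\geq 1} a_j(z;s)b_j(y;r)$, where $a_j(z;s) := \langle H(z,\cdot;s),\varphi_j\rangle_\mathcal{H}$ and $b_j(y;r) := \langle f(\cdot,y;r),\varphi_j\rangle_\mathcal{H}$, I would interchange the $\rho_x$-integration with both the $j$-series and the $\tau$-integration. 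The justification is Fubini--Tonelli together with the Cauchy--Schwarz bound $|\sum_j a_j(z;s)b_j(y;r)| \leq \|H(z,\cdot;s)\|_\mathcal{H}\|f(\cdot,y;r)\|_\mathcal{H}$, the parametrix norm bounds of condition (3) of Definition \ref{def. parametrixH}, and the finiteness $\rho_x(\X) < \infty$ from Assumption~C. Reassembling the pieces and recognizing the inner Laplacian action produces $\Delta_x(H\ast f) = (\Delta_x H)\ast f$.

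For the second identity, I would truncate the $\tau$-integral at $t-\epsilon$ and apply Leibniz's rule to obtain
$$
\partial_t \int_0^{t-\epsilon}\!\langle H(x,\cdot;t-\tau),f(\cdot,y;\tau)\rangle_\mathcal{H}\,d\tau = \langle H(x,\cdot;\epsilon),f(\cdot,y;t)\rangle_\mathcal{H} + \int_0^{t-\epsilon}\!\partial_t \langle H(x,\cdot;t-\tau),f(\cdot,y;\tau)\rangle_\mathcal{H}\,d\tau.
$$
The differentiation of the inner product under the integral is handled by Lemma \ref{lem: dif of inner product wrt t}, using that $\partial_t H(x,\cdot;t) \in \mathcal{H}$. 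Sending $\epsilon \to 0^+$, condition (4) of Definition \ref{def. parametrixH} identifies the boundary term with $f(x,y;t)$, while the remaining integral is $((\partial_t H)\ast f)(x,y;t)$. Combining the two identities yields $L_x(H\ast f) = f + (\Delta_x H)\ast f + (\partial_t H)\ast f = f + (L_x H)\ast f$, as required.

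The main obstacle is the Fubini exchange underlying the first identity. Unlike in the $L^p$ setting of the earlier lemmas, the inner product on a general Hilbert space $\mathcal{H}$ is not literally an integral against a pointwise measure, so its interchange with $\rho_x$-integration is not automatic and must be mediated by the orthonormal basis expansion, controlled uniformly in $z\in\X$ by the norm bounds of condition (3) and the finiteness of $\rho_x(\X)$. Once this exchange is justified, the rest follows by the same Leibniz-plus-Dirac scheme used in the proof of Lemma \ref{lem: L of convol 1, infty}.
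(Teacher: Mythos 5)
Your proposal is correct and follows essentially the same route as the paper's proof: the decomposition $L_x=\partial_t+\Delta_x$, the orthonormal-basis expansion of the inner product combined with Cauchy--Schwarz and the finiteness of $\rho_x$ to justify pulling $\Delta_x$ through the convolution, and the Leibniz-rule truncation at $t-\epsilon$ with condition (4) supplying the boundary term $f(x,y;t)$. The only small slip is your appeal to condition (3) of Definition \ref{def. parametrixH} for the Fubini exchange --- that condition bounds $L_xH$ rather than $H$ itself; the bound actually used (as in the paper) is just the membership of $H(z,\cdot;s)$ and $f(\cdot,y;r)$ in $\mathcal{H}$ together with $\rho_x(\X)<\infty$.
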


\begin{proof} From the assumptions, we have that $f(\cdot, y;t)\in \mathcal{H}$ and $H(x,\cdot;t), \, \Delta_{x}H(x,\cdot;t)\in \mathcal{H}$
for all $(x,y;t) \in \X\times \X \times \mathbb{R}_{>0}$ are continuous in variable $t$.  Therefore,
their inner product is integrable on $[0,t]$, which proves that convolutions $H\ast f$ and $(\Delta_{x}H)\ast f$ are well defined.
Next, we claim that
\begin{equation}\label{eq. delta od conv}
  \Delta_{x}(H\ast f)(x,y;t) =  \left( (\Delta_{x}H)\ast f\right)(x,y;t).
\end{equation}
In order to prove \eqref{eq. delta od conv}, let us write
$$
H(x,\cdot;t)- H(z,\cdot;t)=\sum_{j\geq 1}h_{x,j}(z;t)\varphi_j(\cdot)
\,\,\,\,
\text{\rm and}
\,\,\,\,
f(\cdot,y;t)=  \sum_{j\geq 1}g_j(y;t)\varphi_j(\cdot)
$$
Then
\begin{align} \notag
 \Delta_{x}(H\ast f)(x,y;t) &= \int_\X\left(\int_0^t \left[ \langle H(x,\cdot;t)- H(z,\cdot;t-\tau),  f(\cdot,y;\tau) \rangle_{\mathcal{H}} \right] d\tau\right)\rho_x(dz)\\ \notag
 &=\int_\X\left(\int_0^t \left[ \sum_{j\geq 1} h_{x,j}(z;t-\tau)g_j(y;\tau) \right] d\tau\right)\rho_x(dz)\\ &= \int_0^t \left( \sum_{j\geq 1} \left(\int_\X  h_{x,j}(z;t-\tau) \rho_x(dz)\right) g_j(y;\tau)  \right)d\tau\label{eq. interchange od deltax}.
\end{align}
To be precise, the above computations involved an
interchange of the sum and the integral which is justified because
\begin{align*}
\sum_{j\geq 1} \int_\X \int_0^t &| h_{x,j}(z;t-\tau)g_j(y;\tau)  | \rho_x(dz) d\tau = \int_\X \int_0^t \left(\sum_{j\geq 1}| h_{x,j}(z;t-\tau)g_j(y;\tau)  |\right)  \rho_x(dz) d\tau\\ &\leq \int_\X \int_0^t \left(\sum_{j\geq 1} h_{x,j}(z;t-\tau)^2 \right)^{1/2} \left(\sum_{j\geq 1} g_{,j}(y;\tau)^2 \right)^{1/2 }\rho_x(dz) d\tau \\&=\int_\X \int_0^t ||H(x,\cdot;t-\tau)- H(z,\cdot;t-\tau) ||_\mathcal{H} ||f(\cdot,y;\tau) ||_\mathcal{H}\rho_x(dz) d\tau <\infty,
\end{align*}
where we used that the measure $\rho_x$ is finite and all norms are bounded.

Continuing, we claim that
\begin{equation}\label{eq. intermediate}
\int_\X  h_{x,j}(z;t-\tau) \rho_x(dz) = \langle \Delta_xH(x,\cdot;t-\tau),\varphi_j(\cdot) \rangle_\mathcal{H}.
\end{equation}
Since
\begin{equation}\label{eq.DeltaH_formula}
\Delta_xH(x,\cdot;t-\tau) = \int_\X \left(\sum_{\ell\geq 1} h_{x,\ell}(z;t-\tau) \varphi_\ell (\cdot)\right) \rho_x(dz),
\end{equation}
in order to prove \eqref{eq. intermediate} it suffices to show that one can interchange the sum and the integral in \eqref{eq.DeltaH_formula}.
This easily follows from the Cauchy-Schwarz inequality, namely
$$
\sum_{\ell\geq 1} \int_\X \left| h_{x,\ell}(z;t-\tau) \varphi_\ell (\cdot) \right| \rho_x(dz)\leq \int_\X \left( \sum_{\ell\geq 1} h_{x,\ell}(z;t-\tau)^2\right)^{1/2}  \left( \sum_{\ell\geq 1} \varphi_\ell(\cdot)^2\right)^{1/2} <\infty.
$$

Now, when combining \eqref{eq. interchange od deltax} and \eqref{eq. intermediate} we arrive at
\begin{align*}
\Delta_{x}(H\ast f)(x,y;t) &= \int_0^t \left( \sum_{j\geq 1} \langle \Delta_xH(x,\cdot;t-\tau),\varphi_j(\cdot) \rangle_\mathcal{H}  g_j(y;\tau)  \right)d\tau \\&=  \int_0^t \left( \langle \Delta_x H(x,\cdot;t-\tau),f(\cdot,y;\tau) \rangle_\mathcal{H}  \right)d\tau \\&=  \left( (\Delta_{x}H)\ast f\right)(x,y;t).
\end{align*}
Therefore,
\begin{align}\notag
L_{x}(H\ast f)(x,y;t)&=
    \frac{\partial}{\partial t}(H\ast f)(x,y;t)
    + \Delta_{x}(H\ast f)(x,y;t)\\ &=
  \frac{\partial}{\partial t}(H\ast f)(x,y;t)+  \left( (\Delta_{x}H)\ast f\right)(x,y;t).\label{eq:heat_of_convolution2}
\end{align}
The function $\langle H(x,z;t-r),f(z,y;r) \rangle_{\mathcal{H}}$ is continuous in the time variable.
This follows because the integrand is continue due to the H\"older inequality and the assumptions on the parametrix, namely boundedness
uniform boundedness in $r\in[0,t]$ by an integrable function.  So we can apply the
Leibniz integration formula.  Upon doing so, we obtain that
the first term on the right hand side of \eqref{eq:heat_of_convolution2} is equal to
\begin{align} \label{eq.deriv in t}
\lim_{\varepsilon\to 0^+}\frac{\partial}{\partial t}&\int\limits _{0}^{t-\varepsilon}\left( \langle H(x,\cdot;t-\tau),f(\cdot,y;\tau) \rangle_\mathcal{H}  \right)d\tau \\&
=\lim_{\varepsilon\to 0^+}\left( \langle H(x,\cdot;\varepsilon),f(\cdot,y;\tau) \rangle_\mathcal{H}  \right) +\int\limits _{0}^{t} \frac{\partial}{\partial t}\left( \langle H(x,\cdot;t-\tau),f(\cdot,y;\tau) \rangle_\mathcal{H}  \right)d\tau .\notag
\end{align}
In view of assumption (4) on the parametrix $H$, we get from \eqref{eq.deriv in t}, combined with Lemma \ref{lem: dif of inner product wrt t} that
$$
\frac{\partial}{\partial t}\int\limits _{0}^{t}\int\limits_{\X}H(x,z;t-r)f(z,y;r) \mu(dz)dr= f(x,y;t) + \left(\frac{\partial}{\partial t}H\ast f\right)(x,y;t).
$$
Therefore,
\begin{align*}
L_{x}(H\ast f)(x,y;t) &=
  \frac{\partial}{\partial t}(H\ast f)(x,y;t)+   (\Delta_{x}H\ast f)(x,y;t)
\\&=f(x,y;t)+\left(\frac{\partial}{\partial t}H\ast f\right)(x,y;t)+(\Delta_{x}H\ast f)(x,y;t)
\\&=f(x,y;t)+(L_{x}H\ast f)(x,y;t),
\end{align*}
as claimed.
\end{proof}

\begin{theorem}
\label{thm: formula for H- heat kernel} Let $H$ be an $\mathcal{H}$ parametrix of
order $k\geq0$ for the heat operator on $\X$.  For $x,y\in \X$ and $t\in \mathbb{R}_{\geq 0}$, let
\begin{equation}\label{eq:Neuman_series H}
F(x,y;t):=\sum_{\ell=1}^{\infty}(-1)^{\ell}
(L_{x}H)^{\ast\ell}(x,y;t).
\end{equation}
Then the Neumann series \eqref{eq:Neuman_series H} converges uniformly in the Hilbert space norm $||\cdot||_{\mathcal{H}}$ on every compact subset of $\X\times \X\times\mathbb{R}_{\geq 0}$.
Furthermore, the heat kernel $K_{\X}$ on $\Hil$
associated to graph Laplacian $\Delta_{x}$ is given by \eqref{eq:heat_kernel_parametrix}
%\begin{equation}
%H_{\X}(x,y;t)=H(x,y;t)+(H\ast F)(x,y;t)
%\end{equation}
and
$$
||(H\ast F)(x,y;t)||_\mathcal{H}^2
= O(t^{k+2})
\,\,\,\,\,
\text{\rm as $t \rightarrow 0^+$.}
$$
\end{theorem}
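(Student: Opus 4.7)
The proof plan is to mirror the structure of Theorems~\ref{thm: formula for heat kernel} and~\ref{thm: formula for heat kernel 2}, using the Hilbert-space inputs (Lemmas~\ref{lem:convolution bounds Hilb}, \ref{lemma 2,2, conv series Hilb} and~\ref{lem: L of convolHilb S}) in place of their $L^p$ analogues. Set $\tilde H(x,y;t):=H(x,y;t)+(H\ast F)(x,y;t)$ and verify that $\tilde H$ meets the three conditions of Definition~\ref{def: HK Hilb space}.

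First, I would check that $f:=L_{x}H$ satisfies the hypotheses of Lemma~\ref{lemma 2,2, conv series Hilb}. This is essentially restating condition~(3) of Definition~\ref{def. parametrixH}: the expansion coefficients $a_j(x;t)=\langle L_xH(x,\cdot;t),\varphi_j(\cdot)\rangle_{\mathcal{H}}$ are continuous in $t$ and lie in $\mathcal{H}$ as functions of $x$; the quantities $\sum_{j\ge1}\|a_j(\cdot,t)\|_{\mathcal{H}}^{2}$ and $\|f(\cdot,y;t)\|_{\mathcal{H}}^{2}$ are controlled by the required powers of $t$. The lemma then yields that the Neumann series \eqref{eq:Neuman_series H} converges absolutely and uniformly in the $\mathcal{H}$-norm on every compact subset of $\X\times\X\times\mathbb{R}_{\geq0}$, that $F(\cdot,y;t)\in\mathcal{H}$, and (via \eqref{eq: convolution with inf sum2}) the identity $L_xH \ast F = \sum_{\ell=1}^{\infty}(-1)^{\ell}(L_xH)^{\ast(\ell+1)}$.

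Next, I would apply Lemma~\ref{lem:convolution bounds Hilb} with $F_1=H$ and $F_2=F$. Since $H(\cdot,\cdot;t)\in\mathcal{H}$ in each slot (so $H$ is Hilbert-Schmidt-type), and the estimate on $\|F(\cdot,y;t)\|_{\mathcal{H}}^{2}$ from Step 1 provides the required time-decay, the lemma yields both $(H\ast F)(\cdot,y;t)\in\mathcal{H}$ and the asymptotic bound $\|(H\ast F)(\cdot,y;t)\|_{\mathcal{H}}^{2}=O(t^{k+2})$ as $t\to0^{+}$. Combined with the regularity of $H$, this gives that $\tilde H$ satisfies condition~(1) of Definition~\ref{def: HK Hilb space}, and together with condition~(4) of the parametrix it gives the initial condition~(3): $(H\ast F)$ vanishes as $t\to0^{+}$ and $\langle H(x,\cdot;t),g\rangle_{\mathcal{H}}\to g(x)$.

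For the PDE condition~(2), I would invoke Lemma~\ref{lem: L of convolHilb S} (whose hypotheses on $F$ were secured in Step~1) to compute
\[
L_x\tilde H(x,y;t) = L_xH(x,y;t) + F(x,y;t) + (L_xH\ast F)(x,y;t),
\]
and then telescope using the identity from Step 1 together with the defining series for $F$. All three terms collapse exactly as in the proofs of Theorems~\ref{thm: formula for heat kernel} and~\ref{thm: formula for heat kernel 2}, giving $L_x\tilde H\equiv 0$. The main technical obstacle is the bookkeeping in Step~1, namely matching the powers of $t$ in the $\mathcal{H}$-parametrix conditions against the exponent convention of Lemma~\ref{lemma 2,2, conv series Hilb}, so that the final estimate comes out with exponent $k+2$ as claimed; once the indexing is aligned the remaining manipulations are dictated by the pattern already established in the $L^{1}$ and $L^{2}$ proofs.
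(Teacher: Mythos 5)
Your proposal is correct and follows essentially the same route as the paper's own proof: set $\tilde H = H + (H\ast F)$, invoke Lemma~\ref{lemma 2,2, conv series Hilb} with $f=L_xH$ for convergence of the Neumann series and the $O(t^k)$ bound on $\|F(\cdot,y;t)\|_{\mathcal{H}}^2$, apply Lemma~\ref{lem:convolution bounds Hilb} with $F_1=H$, $F_2=F$ for the $O(t^{k+2})$ estimate and the initial condition, and use Lemma~\ref{lem: L of convolHilb S} to telescope $L_x\tilde H$ to zero. The indexing issue you flag (the exponent $t^k$ in Definition~\ref{def. parametrixH}(3) versus $t^{k+2}$ in the hypothesis of Lemma~\ref{lemma 2,2, conv series Hilb}) is a real wrinkle that the paper itself passes over silently, so noting it is appropriate rather than a defect of your argument.
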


\begin{proof}
Set
$$
\tilde{H}(x,y;t) := H(x,y;t)+(H\ast F)(x,y;t).
$$
By Lemma \ref{lemma 2,2, conv series Hilb} (with $f=L_xH$), the series $F(x,y;t)$ defined in \eqref{eq:Neuman_series H}
converges in the Hilbert space $\mathcal{H}$. Moreover, according to \eqref{eq:series_bound2 Hilb}, as a function of the first variable, it belongs to $\mathcal{H}$ and
\begin{equation}\label{eq. F in H bound}
||F(\cdot,y;t)||_\mathcal{H}^2\leq \sum_{\ell=1}^\infty \left\|(L_xH)^{\ast \ell}(\cdot,y;t)\right\|_\mathcal{H}^2=  O(t^{k}),
\end{equation}
as $t \rightarrow 0^+$. Therefore, $\tilde{H}$ satisfies assumption (1) in Definition \ref{def: HK Hilb space}. Next, we want to show that
\begin{equation}\label{eq:heat_kernel_criteria 2}
L_{x}\tilde{H}(x,y;t)=0
\,\,\,\,\,
\text{\rm and}
\,\,\,\,\,
 \lim_{t\to 0^+}\langle H (x,\cdot;t),f(\cdot)\rangle_\Hil =f(x),
\,\,\,\,
\text{\rm $x\in\X$ and $f\in \mathcal{H}$.}
\end{equation}
Assumptions on the parametrix $H$ together with \eqref{eq. F in H bound} ensure that  Lemma \ref{lem:convolution bounds Hilb} can be applied with $F_1=H$ and $F_2=F$ to deduce that
$$
\left\|H\ast F(x,y;t)\right\|_\mathcal{H}^2= O(t^{k+2})
\,\,\,\,\,
\text{\rm as $t \rightarrow 0^+$.}
$$
Therefore, for any $f\in\mathcal{H}$ and $x\in\X$
$$
 \lim_{t\to 0^+}\langle \tilde H (x,\cdot;t),f(\cdot)\rangle_\Hil =\lim_{t\to 0^+}\langle  H (x,\cdot;t)f(\cdot)\rangle =f(x).
$$
It remains to prove the vanishing of $L_{x}\tilde{H}$ in \eqref{eq:heat_kernel_criteria 2}.
For this, we can apply Lemma \ref{lem: L of convolHilb S} to get that

\begin{align*}
L_{x}\tilde{H}(x,y;t)&=
L_{x}H(x,y;t)+L_{x}(H\ast F)(x,y;t)
\\&=
L_{x}H(x,y;t)+\sum_{\ell=1}^{\infty}(-1)^{\ell}(L_{x}
{H})^{\ast\ell}(x,y;t)+(L_{x}{H})*\left(\sum_{\ell=1}^{\infty}(-1)^{\ell}(L_{x}{H})^{\ast\ell}\right)(x,y;t)\\&=
L_{x}H(x,y;t)+\sum_{\ell=1}^{\infty}(-1)^{\ell}(L_{x}
{H})^{\ast\ell}(x,y;t)+\sum_{\ell=1}^{\infty}(-1)^{\ell}(L_{x}{H})^{\ast(\ell+1)}(x,y;t)\\&
=0.
\end{align*}
This completes the proof.
\end{proof}

\section{Construction of a parametrix}

In Section \ref{sec. param. constr} assuming that the corresponding parametrix exists we constructed an $L^1$ heat kernel, an $L^2$ heat kernel and
a Hilbert space heat kernel associated to the Laplacian $\Delta$, as defined in Section \ref{sec. setup}.
Therefore, in order to construct the heat kernel in a given setting, it suffices to prove the existence of the corresponding parametrix.

As it turns out, the conditions in Defintions \ref{def. parametrix1}(4), \ref{def. parametrix2}(4) and \ref{def. parametrixH}(4) are
relatively straightforward, and the remaining
conditions for each of these defintions are not.  In the setting of Riemannian geometry, the construction of a parametrix is developed
using the Minakshishundaram-Pleijel recursive formulas.  In this section we will outline these ideas and show how it may be possible to
extend the notions in other settings.

We will leave the thorough investigation of these points for the follow-up paper \cite{JJS26}.  Nonetheless, we are presenting the discussion in this section
in order to complete the theoretical framework in this article.

\subsection{Compact Riemannian manifolds}\label{subsec:Riemannian_HK}
Let $M$ be a finite dimensional, smooth, compact Riemannian manifold of real dimension $n$.
Since $M$ is locally modeled by Euclidean space, one would expect that the
heat kernel on $M$ can be approximated  for small $t$ by the heat kernel on $n$-dimensional Euclidean space.  Specifically, and
by following \cite{MP49}, one takes a parametrix
for $y$ sufficiently close to $x$ to be given by
\begin{equation}\label{eq:Riemannian_parametrix}
H_{k}(x,y;t) = \frac{1}{(4 \pi t)^{n/2}}e^{-d^{2}_{M}(x,y)/4t}\left(\sum\limits_{j=0}^{k}u_{j}(x,y)t^{j}\right)
\end{equation}
where $k > n/2$, $d_{M}(x,y)$ is the Riemannian distance between $x$ and $y$,
and differential functions $\{u_{j}\}$.  The main objective is to have that $L_{x}H(x,y;t)$ extends to a continuous
function on $M\times M \times \mathbb{R}_{>0}$.  In order to fulfill this condition, and as discussed on page 149 of \cite{Ch84}, one
sets the \it Ansatz \rm that
\begin{equation}\label{eq:MP_ansatz}
L_{x} H_{k}(x,y,t) = t^{k}\Delta_{x}(u_{k}(x,y))\cdot \frac{1}{(4 \pi t)^{n/2}}e^{-d^{2}_{M}(x,y)/4t}.
\end{equation}
Indeed, if one can satisfy \eqref{eq:MP_ansatz}, then the first three conditions of Definitions \ref{def. parametrix1} and \ref{def. parametrix2}
will follow.  In general, the Laplacian considered in this setting satisfies the product formula that
\begin{equation}\label{eq:Laplacian_of_a_product}
\Delta (fg) = (\Delta f)g + 2\langle \text{\rm grad}f, \text{\rm grad}g \rangle + f (\Delta g),
\end{equation}
for $C^{2}$ functions on $M$ and with first derivative operator $\text{\rm grad}$; see page 3 of \cite{Ch84}.
Furthermore, as stated in \cite{Ch84}, it is advantageous to write all expressions in \eqref{eq:MP_ansatz} in geodesic spherical
coordinates near $x$ on $M$.  In doing so, one obtains from \eqref{eq:MP_ansatz}
a system of \it first-order linear differential equations \rm by equating the coefficients in $t$.  In doing so, one
gets an equation for $u_{0}$ in terms the metric on $M$, and an equation for $u_{j}$ in terms of
the metric on $M$ and $u_{j-1}$ whenever $j>0$; see pages 149-150 of \cite{Ch84}.  These equations are readily solvable,
thus leading to a somewhat explicit solution to \eqref{eq:Riemannian_parametrix}, as given on page 150 of \cite{Ch84}.

With all this, one constructs a parametrix of the form \eqref{eq:Riemannian_parametrix} from which the heat kernel is
obtained.

\begin{remark}\label{rem:non_gauss_asymptotics}
When considering the space of functions on a fractal space, it is not clear if a parametrix
similar to \eqref{eq:Riemannian_parametrix} exists. Indeed, if $K_{SG}(x,y;t)$ denotes the heat kernel on the Sierpinski gasket,
it is stated on page 70 of \cite{Ka12} that one of the main results of \cite{Ku97} is that
$$
\lim\limits_{t \rightarrow 0^{+}} t^{c}\log K_{SG}(x,y;t)
$$
does not exist for the appropriate power of $c$ and various points $x$ and $y$ on the Sierpinski gasket.  Refined results
were obtained in \cite{Ka12} using the measurable Riemannian structure; see Corollary 1.2(b).  While the author proves upper
and lower bounds for $K_{SG}(x,y;t)$ (see Corollary 1.2(a) loc. cit.), the bounds are such that one does not see a specific two-sides
Gaussian-type asymptotic which is needed in order to have a parametrix in analogy with \eqref{eq:Riemannian_parametrix}.
\end{remark}

\subsection{An $L^1$ parametrix}
Consider an $L^{1}$ space with distance measure $d(x,y)$.  Let $F : \mathbf{R}_{\geq 0} \rightarrow \mathbf{R}_{\geq 0}$ be
a smooth, non-negative  function whose Lebesgue integral is equal to one.  Without loss of generality, assume that
$F(0) \neq 0$.  It is elementary to show that the function $c(x,t)F(d(x,y)/t)$ satisfies Definition \ref{def. parametrix1}(4)
where $c(x,t)$ is determined by taking $H(x,y;t)=c(x,t)F(d(x,y)/t)$ and $f(y) \equiv 1$ in \eqref{eq:dirac property of heat kernel}; see for example page 51 of \cite{La99}.  With this, one can
set as an \it Ansatz \rm that a parametrix for the heat kernel is of the form
\begin{equation}\label{eq:L1_parametrix}
H_{k}(x,y;t) = c(x,t)F(d(x,y)/t)\left(\sum\limits_{j=0}^{k}u_{j}(x,y)t^{j}\right)
\end{equation}
together with the condition that
\begin{equation}\label{eq:MP_L1_ansatz}
L_{x} H_{k}(x,y,t) = t^{k}\Delta_{x}(u_{k}(x,y))\cdot c(x,t)F(d(x,y)/t).
\end{equation}
Implicit in the setting from Section \ref{subsec:Riemannian_HK} is that derivatives of the Euclidean heat kernel are
multiplies of the Euclidean heat kernel, thus one can factor the Euclidean heat kernel from all terms in \eqref{eq:MP_ansatz}.
Since the Euclidean heat kernel is non-vanishing,
\eqref{eq:MP_ansatz} then becomes an identity which is a polynomial in $t$ from which one obtains the Minakshishundaram-Pleijal
recursive formulas, which are solvable.

As described in Remark \ref{rem:non_gauss_asymptotics}, one cannot always assume that $F$ is Gaussian.
However, other specific examples for $F$ may be useful, at least to construct the parametrix, and thus prove the existence of the heat kernel.
For instances, one could consider $F(u) = (3/4)(1 - u^{2})$ when $\vert u \vert < 1$ and $0$ elsewhere.  In doing so, the
\it Ansatz \rm \ref{eq:MP_L1_ansatz} yields a polynomial in $t$ whose coefficients involve the $u_{j}$'s. Now, if the Laplacian
admits a structure such as \eqref{eq:Laplacian_of_a_product} and the operator $\text{\rm grad} u_{k}$ is easily inverted, as in the
geometric case when $\text{\rm grad} $ is essentially a first derivative, then one again arrives at a system of first-order linear
differential equations.  Another possibility one could consider is $F(u) = e^{-u}$.

As stated, we will leave the full development of generalized recursive formulas stemming from \eqref{eq:MP_L1_ansatz} for \cite{JJS26}.

\subsection{An $L^2$ parametrix} The discussion in the previous section applies in the $L^2$ setting as well.  Additionally,
we will point out the following perhaps trivial consideration.

Assume $L^2$ has a countable basis of eigenfunctions $\{\phi_{n}\}$ with corresponding eigenvalues $\{\lambda_{n}\}$.  As such,
one can write the parametrix $H(x,y,t)$ as
\begin{equation}\label{eq. defn H l2}
H(x,y,t) = \sum\limits_{n=0}^{\infty}a_{n,m}(t)\phi_{m}(x)\phi_{n}(y).
\end{equation}
Then, under appropriate convergence assumptions to be addressed, we have that
\begin{align*}
L_{x}H(x,y;t)= \sum\limits_{n=0}^{\infty}(L_{x}(a_{n,m}(t)\phi_{m}(x))\phi_{n}(y)
&= \sum\limits_{n=0}^{\infty}(\partial_{t}a_{n,m}(t)+\lambda_{m}a_{n,m}(t))\phi_{m}(x))\phi_{n}(y)
\\&= \sum\limits_{n=0}^{\infty}\left((\partial_{t}(e^{\lambda_{m}t}a_{n,m}(t))\right)e^{-\lambda_{m}t}\phi_{m}(x)\phi_{n}(y).
\end{align*}
If $H(x,y,t)$ is in fact the heat kernel, then $a_{n,m}(t) = 0$ if $n\neq m$ and $a_{n,n}(t) = \exp(-\lambda_{n}t)$ for all $n$.
More generally, the bounds required by  Definition \ref{def. parametrix2} are fulfilled if
$a_{n,m}(t)$  are \it close \rm to these formulas.  For example, it suffices to have that
\begin{equation}\label{eq:coeff_bounds}
\left((\partial_{t}(e^{\lambda_{m}t}a_{n,m}(t))\right) = O\left(t^{k}\lambda_{n}^{-h}\lambda_{m}^{-h}\right)
\end{equation}
for sufficiently large $k$ and $h$.
The condition \ref{def. parametrix2}(4) with $H$ defined by \eqref{eq. defn H l2}
follows from the assumption that $\lim\limits_{t \rightarrow 0}a_{n,m}(t) = \delta_{n=m}$ and \eqref{eq:coeff_bounds}.  Hence,
upon integration, we conclude that
$$
a_{n,m}(t) = \delta_{n=m}e^{-\lambda_{m}t}+O\left(t^{k}\lambda_{n}^{-h}\lambda_{m}^{-h}-\lambda_{m}t\right)
$$
for sufficiently large $h$ and $k$.  For example, one can choose $h$ large enough so that the series
$$
\sum\limits_{n=0}^{\infty}\frac{\Vert \phi_{n}\Vert_{\infty}}{\lambda_{n}^{h}} < \infty,
$$
and $k$ is picked to satisfy the conditions of Definition \ref{def. parametrix2}.

\section{Further properties of heat kernels}
In this section, under some additional assumptions we prove further properties of heat kernels, such as uniqueness, semigroup property, conservativeness and completeness.

We assume that measures $\mu$ and $\lambda$ introduced above are equal and assume that the Hilbert space $\Hil$ equals the energy space $\Hil_{\mathcal{E}}$, introduced in Section \ref{sec. setup}. Note that, when dealing with the normalized Laplacian $\tilde \Delta$, one should take $\mu=\nu$.

Then, self-adjointness property on $L^2(\lambda)$ is simply the statement that
\begin{equation}\label{eq:Green1}
\int\limits_{\X}u(x)(\Delta v)(x) \lambda(dx) = \int\limits_{\X}(\Delta u)(x)v(x) \lambda(dx),
\end{equation}
which amounts to Green's formula in many geometric settings (see e.g. Theorem 2.14. of \cite{JP19}).  In particular, we have that
\begin{equation}\label{eq:Green2}
\int\limits_{\X}(\Delta u)(x) \lambda(dx) = 0.
\end{equation}
Also, the semi-positivity of the Laplacian means that
\begin{equation}\label{eq:Green3}
\int\limits_{\X}u(x)(\Delta u)(x) \lambda(dx) \geq 0,
\end{equation}
see Lemma 2.12. of  \cite{JP19}. As always, it suffices that the above statements hold for a dense subspace of functions, such as $\mathcal{D}_{\rm fin}^\dagger$.

The self-adjointness on $\Hil$ means that
\begin{equation}\label{eq:Green1}
\langle u(x),(\Delta v)(x) \rangle_\Hil = \langle(\Delta u)(x),v(x) \rangle_\Hil,
\end{equation}
for $u,v \in \mathcal{D}_{\rm fin}^\dagger$, see e.g. \cite{JP19}.

As we will see, \eqref{eq:Green1}, \eqref{eq:Green2} and \eqref{eq:Green3}
imply that the heat kernels, which we study in the next section,
are conservative.
The discussion below follows closely
the material from \cite{Ch84}, beginning on page 136.  For the convenience of the reader,
we will repeat the most important points in the development from \cite{Ch84} for the $L^2$ setting. Proofs in the Hilbert space setting are completely analogous, so we will skip those.
% Recall that the Laplacian on $(\X,\mathcal{B})$ is introduced in Section \ref{sec. setup} and that $\mu$ is a measure on  $(\X,\mathcal{B})$ not necessarily equal to the measure $\lambda$ from Section \ref{sec. setup}.

\subsection{Uniqueness of the heat kernel}

We begin with the following lemma which is often referred to as \it Duhamel's principle \rm in existing literature.

\begin{lemma} \label{lem:Duhamel}
Let $u(x,t)$ and $v(x,t)$ be integrable functions on the product space $(\X \times \mathbb{R}_{>0}, \lambda\times m)$ where $m$ denotes the Lebesgue measure on $\mathbb{R}_{>0}$, and such
that the functions $u\Delta v$, $v\Delta u$, $u\partial_{t}v$ and $v\partial_{t}u$ are also integrable.
Then for any positive real numbers $\alpha$ and $\beta$ such that $0 < \alpha < \beta <\infty$, we have that
\begin{align*}
\int\limits_{\X}&\left(u(x,t-\beta)v(x,\beta) - u(x,t-\alpha)v(x,\alpha)\right)\lambda(dx)
\\&=\int\limits_{\alpha}^{\beta}\int\limits_{\X}
\left((\Delta + \partial_{t})u(x,t-\tau)v(x,\tau) - u(x,t-\tau)(\Delta + \partial_{t})v(x,\tau)\right)\lambda(dx)d\tau.
\end{align*}
\end{lemma}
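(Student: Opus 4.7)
The plan is to recognize the integrand on the right-hand side as (essentially) a total $\tau$-derivative of the product $u(x,t-\tau)v(x,\tau)$, once the self-adjointness of $\Delta$ on $L^{2}(\lambda)$ is used to cancel the spatial contribution upon integration in $x$. With this observation in hand, Duhamel's identity reduces to the fundamental theorem of calculus in the variable $\tau$, combined with Fubini--Tonelli to interchange the $\tau$- and $x$-integrations.

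Concretely, I would first fix $x\in\X$ and $t>0$ and compute, via the product and chain rules,
$$\frac{d}{d\tau}\bigl[u(x,t-\tau)v(x,\tau)\bigr] = -(\partial_{t}u)(x,t-\tau)\,v(x,\tau) + u(x,t-\tau)\,(\partial_{t}v)(x,\tau).$$
Integrating this identity in $x$ against $\lambda$ and then adding to the result the quantity
$$\int\limits_{\X} \bigl[(\Delta u)(x,t-\tau)\,v(x,\tau) - u(x,t-\tau)\,(\Delta v)(x,\tau)\bigr]\lambda(dx),$$
which vanishes by the self-adjointness \eqref{eq:Green1} applied to $u(\cdot,t-\tau)$ and $v(\cdot,\tau)$, converts the integrand (up to sign) into the difference $(\Delta+\partial_{t})u(x,t-\tau)\,v(x,\tau) - u(x,t-\tau)\,(\Delta+\partial_{t})v(x,\tau)$ appearing on the right-hand side of the claim. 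Integrating now in $\tau$ over $[\alpha,\beta]$, swapping the $\tau$- and $x$-integrations, and invoking the fundamental theorem of calculus then produces the boundary terms stated as the left-hand side.

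The main obstacle, therefore, is not the algebraic identity but rather the rigorous justification of these interchanges. Fubini--Tonelli, applied to each of the four component integrals on the right-hand side, is available precisely because of the joint integrability hypotheses on $u\Delta v$, $v\Delta u$, $u\partial_{t}v$, and $v\partial_{t}u$ over $\X\times(0,\infty)$ with respect to $\lambda\times m$; the same hypotheses permit differentiating $\int_{\X} u(x,t-\tau)v(x,\tau)\,\lambda(dx)$ in $\tau$ by standard dominated-convergence arguments, so that the spatial integral of the pointwise $\tau$-derivative agrees with the $\tau$-derivative of the spatial integral. Once these analytic points are settled, the algebraic reduction sketched above completes the proof.
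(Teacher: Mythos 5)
Your proposal is correct and takes essentially the same route as the paper's own proof: both expand the heat-operator expression into the Laplacian difference, which vanishes under the spatial integral by the self-adjointness identity \eqref{eq:Green1}, plus a total $\tau$-derivative of $u(x,t-\tau)v(x,\tau)$, and then integrate over $[\alpha,\beta]$ via the fundamental theorem of calculus. Your added attention to the Fubini--Tonelli and dominated-convergence justifications is a point the paper passes over silently, but it does not change the argument.
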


\begin{proof}
Immediately, one has that
\begin{align}\label{eq:heat_operator_expansion}\notag
(\Delta + \partial_{t})&u(x,t-\tau)v(x,\tau) - u(x,t-\tau)(\Delta + \partial_{t})v(x,\tau)
\\&= (\Delta u)(x,t-\tau)v(x,\tau)  - u(x,t-\tau)(\Delta v)(x,\tau)
+ \partial_{t} (u(x,t-\tau)v(x,t)).
\end{align}
From \eqref{eq:Green1}, in view of the fact that $\mathcal{D}_{\rm fin}^\dagger$ is dense,
we have that
$$
\int\limits_{\X}\left((\Delta u)(x,t-\tau)v(x,\tau) - u(x,t-\tau)(\Delta v)(x,\tau)\right)\lambda(dx) = 0.
$$
With this, the lemma follows by integrating \eqref{eq:heat_operator_expansion} with respect to
$\tau \in \R$ over $[\alpha, \beta]$.
\end{proof}

\begin{proposition} \label{thm:uniqueness}
Assume that an $L^p(\lambda)$ heat kernel $K(x,y;t)$ (for some $p\geq 1$) is such that for arbitrary, fixed $x,y\in\X$, functions $u(\cdot,t):=K(x,\cdot, t)$ and $v(\cdot,t):=K(\cdot, y, t)$ satisfy
the conditions of Lemma \ref{lem:Duhamel}.  Then, such a heat kernel is unique and, furthermore,
$$
K(x,y;t) = K(y,x,t)
\,\,\,\,\,
\text{\rm for all $t \in \mathbb{R}_{>0}$.}
$$
\end{proposition}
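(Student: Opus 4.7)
The plan is to apply Lemma \ref{lem:Duhamel} twice, in two different configurations: once to obtain the symmetry $K(x,y;t)=K(y,x;t)$, and a second time, now using that symmetry, to obtain uniqueness. Both applications follow a common template: choose $u$ and $v$ so that, after invoking the self-adjointness of $\Delta$ on $L^{2}(\lambda)$ recorded in Section \ref{sec. setup}, the right-hand side of Duhamel's identity collapses to $-\int_{\alpha}^{\beta}\partial_{\tau}A(\tau)\,d\tau$ for a bilinear functional $A$; matching this against the left-hand side $A(\beta)-A(\alpha)$ forces $A$ to be constant on $(0,t)$. Symmetry and uniqueness are then read off by identifying the boundary values $\lim_{\tau\to 0^{+}}A(\tau)$ and $\lim_{\tau\to t^{-}}A(\tau)$ via the Dirac-delta property \eqref{eq. dirac delta cond HK}.

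For the symmetry, I will fix $x_{0},y_{0}\in\X$ and $t>0$ and choose
\[
u(z,s):=K(x_{0},z,s),\qquad v(z,s):=K(y_{0},z,s),
\]
both second-variable slices of $K$. Neither function solves $(\partial_{s}+\Delta_{z})\cdot=0$ in $z$, since the heat equation is imposed on the first slot; however, self-adjointness of $\Delta$ gives $\int_{\X}[(\Delta_{z}u)v-u(\Delta_{z}v)]\,\lambda(dz)=0$, so in the right-hand side of Duhamel only the time-derivative balance survives and integrates to $A(\alpha)-A(\beta)$, where $A(\tau):=\int_{\X}K(x_{0},z,t-\tau)K(y_{0},z,\tau)\,\lambda(dz)$. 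Comparison with the Duhamel left-hand side $A(\beta)-A(\alpha)$ gives $2(A(\beta)-A(\alpha))=0$, so $A$ is constant on $(0,t)$. As $\tau\to t^{-}$ the Dirac property of $K(x_{0},\cdot,\cdot)$ applied to the test function $z\mapsto K(y_{0},z,t)\in L^{q}(\lambda)$ yields $A(\tau)\to K(y_{0},x_{0},t)$; as $\tau\to 0^{+}$ the Dirac property of $K(y_{0},\cdot,\cdot)$ applied to $z\mapsto K(x_{0},z,t)$ yields $A(\tau)\to K(x_{0},y_{0},t)$. Equating the two limits gives the symmetry.

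For uniqueness, suppose $K_{1},K_{2}$ are two heat kernels, and apply Duhamel with the pair that is literally covered by the hypothesis,
\[
u(z,s):=K_{1}(x_{0},z,s),\qquad v(z,s):=K_{2}(z,y_{0},s).
\]
Now $v$ itself solves $(\partial_{s}+\Delta_{z})v=0$, so I can combine self-adjointness with the substitution $\Delta_{z}v=-\partial_{s}v$ to again reduce Duhamel's right-hand side to $-\int_{\alpha}^{\beta}\partial_{\tau}B(\tau)\,d\tau$ for $B(\tau):=\int_{\X}K_{1}(x_{0},z,t-\tau)K_{2}(z,y_{0},\tau)\,\lambda(dz)$, and to conclude that $B$ is constant on $(0,t)$. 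The Dirac property of $K_{1}$ gives $\lim_{\tau\to t^{-}}B(\tau)=K_{2}(x_{0},y_{0},t)$. For the limit $\tau\to 0^{+}$ I invoke the symmetry of $K_{2}$ just established to rewrite $K_{2}(z,y_{0},\tau)=K_{2}(y_{0},z,\tau)$; then the Dirac property of $K_{2}$ produces $\lim_{\tau\to 0^{+}}B(\tau)=K_{1}(x_{0},y_{0},t)$. Hence $K_{1}(x_{0},y_{0},t)=K_{2}(x_{0},y_{0},t)$, which is uniqueness.

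The main obstacle I anticipate is the rigorous justification of the two $\tau$-boundary limits, since the ``test function'' against which the Dirac property is being applied itself depends slightly on $\tau$ through the second factor. My plan is to dispatch this by the standard splitting
\[
A(\tau)-K(y_{0},x_{0},t)=\int_{\X}K(x_{0},z,t-\tau)\big[K(y_{0},z,\tau)-K(y_{0},z,t)\big]\,\lambda(dz)+\bigg(\int_{\X}K(x_{0},z,t-\tau)K(y_{0},z,t)\,\lambda(dz)-K(y_{0},x_{0},t)\bigg),
\]
controlling the first bracket by continuity of $K$ in the time variable together with the uniform integrability supplied by the Duhamel hypothesis, and the second by direct appeal to \eqref{eq. dirac delta cond HK}. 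A secondary point is confirming that the self-adjointness identity applies to the relevant slices — i.e.\ that $u(\cdot,s)$ and $v(\cdot,s)$ pair in $L^{2}(\lambda)$ in the way required — which in the $L^{2}$ heat-kernel case is automatic and in general is built into the integrability conditions that the proposition inherits from Lemma \ref{lem:Duhamel}; for the symmetry step one also needs the same Duhamel hypothesis to hold for the pair of second-variable slices $(K(x_{0},\cdot,t),K(y_{0},\cdot,t))$, which is the same type of condition and follows from the naturally symmetric reading of the hypothesis.
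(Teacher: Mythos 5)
Your strategy is the paper's: apply Duhamel's principle to $A(\tau)=\int_{\X}K(x_{0},z;t-\tau)\,K(y_{0},z;\tau)\,\lambda(dz)$, show that $A$ is constant on $(0,t)$, and identify the two boundary limits via the Dirac property \eqref{eq. dirac delta cond HK}; the boundary analysis, including your splitting to handle the $\tau$-dependence of the test function, is sound. The gap is in how you establish constancy of $A$. You rightly note that the second-variable slices $K(x_{0},\cdot\,;\cdot)$ are not known to satisfy $(\partial_{s}+\Delta_{z})\,\cdot=0$ (Definition \ref{def: HK lp space} imposes the heat equation in the first spatial slot), and you try to sidestep this by computing the right-hand side of Lemma \ref{lem:Duhamel} as $-\int_{\alpha}^{\beta}\partial_{\tau}A(\tau)\,d\tau=A(\alpha)-A(\beta)$ and equating it with the left-hand side $A(\beta)-A(\alpha)$ to conclude $2\bigl(A(\beta)-A(\alpha)\bigr)=0$. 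This cannot be a proof: your evaluation of the right-hand side uses only integrability and the self-adjointness of $\Delta$, and no property of the heat kernel at all, so the same argument would show that $\int_{\X}u(z,t-\tau)v(z,\tau)\,\lambda(dz)$ is constant in $\tau$ for arbitrary admissible $u,v$ --- false already for $u(z,s)=s$ and $v\equiv 1$ on a finite measure space. What you have actually detected is a sign slip in the displayed identity of Lemma \ref{lem:Duhamel}: differentiating $A$ under the integral and using self-adjointness gives $A(\beta)-A(\alpha)=\int_{\alpha}^{\beta}\int_{\X}\bigl(u\,(\Delta+\partial_{t})v-((\Delta+\partial_{t})u)\,v\bigr)\,\lambda(dz)\,d\tau$, the negative of what is printed. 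With the correct sign your comparison is the tautology $A(\beta)-A(\alpha)=A(\beta)-A(\alpha)$ and yields nothing; the factor of $2$ is an artifact of the typo.

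The paper's own proof discards the right-hand side of Duhamel because \emph{both} entries are taken to be annihilated by the heat operator, so that only the boundary terms survive; constancy of $A$ genuinely requires this input (or at least that $\int_{\X}((\Delta+\partial_{t})u)\,v\,\lambda(dz)$ vanishes for each $\tau$). Your uniqueness step inherits the same defect: even though $v=K_{2}(\cdot,y_{0};\cdot)$ solves the heat equation, $u=K_{1}(x_{0},\cdot\,;\cdot)$ is not known to, so the corrected identity still leaves the uncontrolled term $\int_{\alpha}^{\beta}\int_{\X}\bigl((\Delta+\partial_{t})u\bigr)(z,t-\tau)\,v(z,\tau)\,\lambda(dz)\,d\tau$. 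To repair the argument you must supply the missing hypothesis that the kernels are annihilated by the heat operator in the variable being integrated --- which is what the paper implicitly assumes, and which the tension you correctly identified between ``heat equation in the first variable'' and ``Dirac property in the second variable'' shows is not automatic --- or rearrange the slices so that the heat equation and the Dirac property are simultaneously available. The cancellation trick does not resolve that tension.
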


\begin{proof}
The proof is the same as that of Theorem 1, page 138 of \cite{Ch84}, which, for the convenience
of the reader, we now repeat with slightly different notation.

Assume $K_{1}(x,y;t)$ and $K_{2}(x,y;t)$ be two heat kernels.  Then apply Duhamel's principle,
Lemma \ref{lem:Duhamel}, with
$$
u(z,\tau) = K_{1}(x,z;\tau)
\,\,\,\,\,
\text{\rm and}
\,\,\,\,\,
v(z,\tau) = K_{2}(y,z;\tau),
$$
which gives that
$$
\int\limits_{\X}\left(K_{1}(x,z;t-\beta)K_{2}(y,z;\beta) - K_{1}(x,z;t-\alpha)K_{2}(y,z;\alpha)\right)\lambda(dz) = 0.
$$
With these choices, let $\alpha \rightarrow 0^{+}$ and $\beta \rightarrow t^{-}$ to conclude from the Dirac property of the heat kernel that
\begin{equation}\label{eq. symmetry}
K_{2}(y,x;t) - K_{1}(x,y;t)=0.
\end{equation}
Repeating the same procedure with
$$
u(z,\tau) = K_{1}(x,z;\tau)
\,\,\,\,\,
\text{\rm and}
\,\,\,\,\,
v(z,\tau) = K_{1}(y,z;\tau),
$$
we deduce that
$$
K_{1}(y,x;t) - K_{1}(x,y;t)=0
$$
Hence, we conclude that $K_{1}(x,y;t)=K_{1}(y,x;t)$, or that $K_{1}$ is symmetric in the space variables.  Second, having showed symmetry, from \eqref{eq. symmetry} we now conclude that $K_{1}(x,y;t) = K_{2}(x,y;t)$, which is uniqueness.
\end{proof}

\subsection{Dependence on the initial condition} Let us now discuss the assumptions under which solutions to the heat equation are uniquely determined by their limiting values as $t\rightarrow 0^{+}$.

\begin{lemma} \label{lem:time_dependence}
Let $u(x,t)$ be a solution of the heat equation $(\Delta+\partial_t)u=0$ and such that $u$, $\Delta u$ and $u\Delta u$ are in $L^1(\lambda)$.  Then
the integral
$$
\int\limits_{\X}u(x,t)\lambda(dx)
$$
is constant in $t$, and the integral
$$
\int\limits_{\X}u^{2}(x,t)\lambda(dx)
$$
is decreasing in $t$.
\end{lemma}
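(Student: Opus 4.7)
The plan is to differentiate each of the two integrals in $t$, use the heat equation $\partial_{t}u=-\Delta u$ to replace the time derivative by the Laplacian, and then appeal to the Green-type identities \eqref{eq:Green2} and \eqref{eq:Green3}. The integrability hypotheses on $u$, $\Delta u$, and $u\Delta u$ are exactly what is needed to justify the interchange of $\partial_t$ with the $\lambda$-integral in each case, via Lemma \ref{rem:Lp_differentiation} with $p=1$ and $g\equiv 1$ (which lies in $L^{\infty}(\lambda)$, i.e.\ the dual of $L^{1}(\lambda)$).

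First I would handle the constancy of the mass integral. Since $u(\cdot,t)\in L^{1}(\lambda)$ and $\partial_{t}u(\cdot,t)=-\Delta u(\cdot,t)\in L^{1}(\lambda)$ by hypothesis, Lemma \ref{rem:Lp_differentiation} gives
$$
\frac{d}{dt}\int\limits_{\X}u(x,t)\,\lambda(dx)=\int\limits_{\X}\partial_{t}u(x,t)\,\lambda(dx)=-\int\limits_{\X}(\Delta u)(x,t)\,\lambda(dx).
$$
The right-hand side vanishes by \eqref{eq:Green2}, so the integral is independent of $t$.

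For the second claim I would again apply Lemma \ref{rem:Lp_differentiation} with $p=1$. Here the relevant function is $u^{2}(x,t)$, whose $t$-derivative is $2u(x,t)\partial_{t}u(x,t)=-2u(x,t)(\Delta u)(x,t)$, and the hypothesis that $u\Delta u\in L^{1}(\lambda)$ provides the required integrable majorant. Hence
$$
\frac{d}{dt}\int\limits_{\X}u^{2}(x,t)\,\lambda(dx)=-2\int\limits_{\X}u(x,t)(\Delta u)(x,t)\,\lambda(dx),
$$
which is non-positive by the semi-positivity \eqref{eq:Green3}; therefore $t\mapsto\int_{\X}u^{2}(x,t)\,\lambda(dx)$ is decreasing.

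The only subtle point is the justification of differentiating under the integral sign; the hypotheses are arranged so that $u(\cdot,t)$ and $\partial_{t}u(\cdot,t)$ (respectively $u^{2}(\cdot,t)$ and $u(\cdot,t)\partial_{t}u(\cdot,t)$) lie in $L^{1}(\lambda)$ for each $t$, which together with pointwise differentiability in $t$ is exactly the input to Lemma \ref{rem:Lp_differentiation}. Since $\mathcal{D}_{\mathrm{fin}}^{\dagger}$ is dense, the identities \eqref{eq:Green2} and \eqref{eq:Green3} extend to the functions $u(\cdot,t)$ under consideration, which closes the argument.
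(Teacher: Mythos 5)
Your proposal is correct and follows essentially the same route as the paper's own proof: differentiate under the integral, substitute $\partial_t u = -\Delta u$ from the heat equation, and invoke \eqref{eq:Green2} for constancy and \eqref{eq:Green3} for monotonicity. Your extra care in citing Lemma \ref{rem:Lp_differentiation} to justify the interchange of differentiation and integration is a welcome addition but does not change the substance of the argument.
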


\begin{proof}
For the first assertion, simply note that
$$
\partial_{t}\int\limits_{\X}u(x,t)\lambda(dx) = \int\limits_{\X}\partial_{t}u(x,t)\lambda(dx)
= \int\limits_{\X}(-\Delta) u(x,t)\lambda(dx) =0.
$$
The proof of the second assertion is similar since
\begin{align*}
\partial_{t}\int\limits_{\X}u^2(x,t)\lambda(dx) &= \int\limits_{\X}\partial_{t}u^2(x,t)\lambda(dx)
= 2\int\limits_{\X}u(x,t)\partial_{t}u(x,t)\lambda(dx)
\\&= -2\int\limits_{\X}u(x,t)\Delta u(x,t)\lambda(dx) \leq 0,
\end{align*}
where the last inequality follows from the semi-positivity of the Laplacian.
\end{proof}

\begin{remark} The above integrability conditions imply that solutions to the heat
equation are uniquely determined by their limiting values as $t\rightarrow 0^{+}$ under
these conditions:  Assume that $u(x,t)$ and $v(x,t)$ are solutions to the heat equation satisfying assumptions of Lemma \ref{lem:time_dependence} and such that
$$
\lim\limits_{t\rightarrow 0^{+}} u(x,t) = \lim\limits_{t\rightarrow 0^{+}} v(x,t).
$$
Assume further that
$u\Delta v$, $v\Delta u$ and $uv$ are integrable.  Then, we conclude
that $u(x,t) = v(x,t)$ for all $x\in \X$ and $t\in \mathbb{R}_{>0}$.  Indeed, the difference
$D(x,t):= u(x,t) - v(x,t)$ is such that
$$
\int\limits_{\X} D(x,t)\lambda(dx) = 0
\,\,\,\,\,
\text{\rm for all $t \in \mathbb{R}_{>0}$,}
$$
because
$$
\lim\limits_{t \rightarrow 0^+}\int\limits_{\X} D(x,t)\lambda(dx) = 0.
$$
Furthermore,
$$
\int\limits_{\X} D^{2}(x,t)\lambda(dx)= 0
\,\,\,\,\,
\text{\rm for all $t \in \mathbb{R}_{>0}$,}
$$
because the integral is non-negative, zero when $t\rightarrow 0^{+}$, and decreasing.  Therefore,
$D(x,t)$ is zero for all $x$ and $t$.
\end{remark}

\begin{remark}
It is at this point we can highlight the famous example due to Tychnoff which shows that the assumptions of integrability given above are necessary condition for the statement of Lemma \ref{lem:time_dependence} to hold true.  Define the function
$\phi(t) = e^{-1/t^{2}}$ for $t > 0$, and $\phi(t) = 0$ for $t < 0$.  Then the function
\begin{equation}\label{eq:Tychnoff}
u(x;t):= \sum\limits_{k=0}^{\infty} \frac{\phi^{(2k)}(t)}{(2k)!}x^{2k}
\end{equation}
satisfies the heat equation $\left(\frac{\partial^2}{\partial x^2} + \frac{\partial}{\partial t}\right)u(x;t)=0$ with initial condition
$$
\lim\limits_{t \rightarrow 0^{+}} u(x;t) = 0
\,\,\,\,\,
\text{\rm for all $x \in \mathbb{R}$.}
$$
The heat kernel on $\mathbb{R}$ is well-known, and it is
$$
K_{\mathbb{R}}(x,y;t) = \frac{1}{\sqrt{4\pi t}}e^{-(x-y)^{2}/4t}.
$$
The function \eqref{eq:Tychnoff} falls outside the range of the restrictions stated above because
$u(x,t)$ because it can be shown that $K_{\mathbb{R}}(x,y,t_{1})u(x,t_{2})$ is not integrable in $x$
in $y$ for any positive $t_{1}$ and $t_{2}$.
\end{remark}

\subsection{Semi-group property}

\begin{proposition} \label{prop:hk_properties}
Assume that the heat kernel $K(x,y,t)$ satisfies the properties of Definition \ref{def: HK lp space}.  Also,
assume that $u(x,t):=K(x,\cdot, t)$ and $v(y,t):=K(\cdot,y,t)$ satisfy the conditions of Lemma \ref{lem:Duhamel} and
Lemma \ref{lem:time_dependence}.  Then the heat kernel satisfies the semi-group property that
\begin{equation}\label{eq:semigroup}
K(x,y,t+s) = \int\limits_{\X}K(x,z,t)K(z,y,s)\lambda(dz)
\end{equation}
for all $x,y \in \X$ and $t, s >0$.  For any $x\in\X$ and $t > 0$,
\begin{equation}\label{eq:stochastically_bounded}
\int\limits_{\X}K(x,y;t)\lambda(dy) \leq 1.
\end{equation}
For all $x,y \in \X$ and $t \in \mathbb{R}_{>0}$,
\begin{equation}\label{eq:positive}
K(x,y;t) \geq 0
\end{equation}
\end{proposition}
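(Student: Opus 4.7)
The plan is to prove the three properties in the order semigroup, positivity, stochastic boundedness, using Duhamel's principle (Lemma \ref{lem:Duhamel}) and Lemma \ref{lem:time_dependence} as the main analytic tools, together with the symmetry of $K$ from Proposition \ref{thm:uniqueness} and the Dirac property from Definition \ref{def: HK lp space}(3).

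For the semigroup identity \eqref{eq:semigroup}, I apply Lemma \ref{lem:Duhamel} to the pair $u(z,\tau) := K(x,z;\tau)$ and $v(z,\tau) := K(z,y;\tau)$, both of which solve the heat equation, so the right-hand side of Duhamel vanishes. Writing $T = t+s$, setting $\beta = t$, and letting $\alpha \to 0^+$ yields
\[
\int_\X K(x,z;s)K(z,y;t)\lambda(dz) = \lim_{\alpha \to 0^+} \int_\X K(x,z;T-\alpha)K(z,y;\alpha)\lambda(dz).
\]
The right-hand limit is computed by splitting $K(x,z;T-\alpha) = K(x,z;T) + [K(x,z;T-\alpha) - K(x,z;T)]$: the first piece gives $K(x,y;T)$ via the Dirac property (after invoking the symmetry $K(z,y;\alpha) = K(y,z;\alpha)$ to recast the integral as $\int K(y,z;\alpha) K(x,z;T)\lambda(dz)$), and the error term vanishes by continuity of $K$ in the time variable.

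For positivity \eqref{eq:positive}, the plan is to first show that the operator $P_t f(x) := \int_\X K(x,y;t) f(y) \lambda(dy)$ preserves positivity on $L^q(\lambda)$ and then deduce pointwise positivity of $K$ by testing against indicators. For the first step I invoke the Beurling--Deny criterion: the Dirichlet form $\E(u,u) = \tfrac{1}{2}\iint_{\X^2}(u(x)-u(y))^2 d\rho(x,y)$ associated with $\Delta$ satisfies $\E(|u|,|u|) \leq \E(u,u)$ by the pointwise inequality $\bigl| |a| - |b| \bigr| \leq |a-b|$, which is equivalent to positivity-preservation of $e^{-t\Delta}$. Pointwise positivity of $K$ then follows by contradiction: if $K(x_0,y_0;t_0) < 0$ somewhere, continuity of $K$ produces $A \in \mathcal{B}_{\rm fin}$ near $y_0$ on which $K(x_0,\cdot;t_0) < 0$, so $P_{t_0}\chi_A(x_0) < 0$, contradicting positivity of $P_{t_0}$ on the non-negative $\chi_A$.

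For stochastic boundedness \eqref{eq:stochastically_bounded}, the first assertion of Lemma \ref{lem:time_dependence} applied to $u(z,t) = K(x,z;t)$ shows that $I(x) := \int_\X K(x,z;t)\lambda(dz)$ is constant in $t$. The bound $I(x) \leq 1$ follows by applying the Dirac property to $f \equiv 1$ when $1 \in L^q(\lambda)$, and in general by approximating $1$ from below by $f_n \in L^q$ with $0 \leq f_n \uparrow 1$, combined with positivity of $K$ and the $L^\infty$-contractivity of $P_t$ (obtained by interpolation from the $L^2$-contractivity of the semigroup generated by the non-negative self-adjoint operator $\Delta$). The main obstacle is the positivity step: verifying the Beurling--Deny criterion in this abstract measure-space setting requires identifying the Dirichlet form $\E$ with $\Delta$ on a suitable dense domain, and alternatives via a maximum principle need additional topological assumptions on $\X$; once positivity is in hand, the other assertions reduce to routine applications of Duhamel and Lemma \ref{lem:time_dependence}.
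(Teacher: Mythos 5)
Your overall skeleton matches the paper's for two of the three claims, but diverges substantially on positivity. For the semigroup identity \eqref{eq:semigroup} you run Duhamel (Lemma \ref{lem:Duhamel}) directly on the pair $K(x,\cdot\,;\cdot)$, $K(\cdot,y;\cdot)$ and evaluate the boundary term as $\alpha\to0^{+}$ via the Dirac property and the symmetry from Proposition \ref{thm:uniqueness}; the paper instead tests both sides of \eqref{eq:semigroup} against $f\in\D_{\rm fin}$, observes that both resulting functions solve the heat equation with the same initial data, and invokes the uniqueness-from-initial-values consequence of Lemma \ref{lem:time_dependence}. Both routes are rooted in Duhamel and both are acceptable at the level of rigor of the paper; yours needs one extra uniformity step, since the error term $\int_{\X}K(y,z;\alpha)\left[K(x,z;T-\alpha)-K(x,z;T)\right]\lambda(dz)$ is not controlled by the Dirac property \eqref{eq. dirac delta cond HK} alone (that property is stated for a fixed test function in $L^{q}$, not for a family depending on $\alpha$), so ``continuity of $K$ in time'' must be upgraded to something uniform against the concentrating mass of $K(y,\cdot\,;\alpha)$. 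For \eqref{eq:stochastically_bounded} your scheme (conservation of mass from Lemma \ref{lem:time_dependence}, positivity, exhaustion of $\X$) is essentially the paper's.

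There are two genuine problems. First, the claim that $L^{\infty}$-contractivity of $P_{t}$ is ``obtained by interpolation from the $L^{2}$-contractivity'' is false: Riesz--Thorin interpolates between two exponents at which bounds are already known, and a bound at the single exponent $p=2$ yields nothing at the endpoint $p=\infty$. The workable elementary route is: conservation of mass plus positivity give $\|P_{t}f\|_{1,\lambda}\leq\|f\|_{1,\lambda}$, and then symmetry of $K$ and duality give the $L^{\infty}$ bound, from which $P_{t}\chi_{A}\leq 1$ and \eqref{eq:stochastically_bounded} follow by exhaustion. Second, your positivity argument via the Beurling--Deny criterion is a genuinely different and much heavier path than the paper's, and you leave its key verification open: one must identify the operator $P_{t}$ defined by the kernel with the semigroup $e^{-t\Delta}$ of the self-adjoint operator associated with the closed form $\langle\cdot,\cdot\rangle_{\E}$, which itself requires the uniqueness and self-adjointness discussion, before the contraction property $\E(|u|,|u|)\leq\E(u,u)$ buys anything. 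The paper's route is far more elementary: by \eqref{eq:semigroup} and symmetry,
\begin{equation*}
\iint\limits_{\X^{2}}f(x)K(x,y;t)f(y)\lambda(dx)\lambda(dy)
=\int\limits_{\X}\Bigl(\int\limits_{\X}K(z,y;t/2)f(y)\lambda(dy)\Bigr)^{2}\lambda(dz)\geq0 ,
\end{equation*}
i.e.\ $\langle f,P_{t}f\rangle=\|P_{t/2}f\|^{2}\geq0$. I would note, to your credit, that this only establishes positive semi-definiteness of the kernel, which does not by itself imply pointwise non-negativity (consider $\phi(x)\phi(y)$ with $\phi$ changing sign); your step of testing against indicators of small sets around $(x_{0},y_{0})$ and using continuity is exactly what is needed to close that gap, and it is a step the paper's own outline passes over. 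So the cleanest repair of your argument is to replace Beurling--Deny by the paper's square identity and keep your localization step.
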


\begin{proof}
For the sake of space, we will outline the main steps of proof and refer to
page 139 of \cite{Ch84} for further details.

For any function $f(x)\in \D_{\rm fin}$ consider the integrals
\begin{equation}\label{eq:semigroup_proof}
\int\limits_{\X}K(x,y,t+s)f(y)\lambda(dy)
\,\,\,\,\,
\text{\rm and}
\,\,\,\,\,
\int\limits_{\X}\left(\int\limits_{\X}K(x,z,t)K(z,y,s)\lambda(dz)\right)f(y)\lambda(dy).
\end{equation}
Both integrals are solutions to the heat equation with initial condition, in the variable $t$,
given by
$$
\int\limits_{\X}K(x,y;t)f(y)\lambda(dy).
$$
Therefore, the two integrals in \eqref{eq:semigroup_proof} are equal.  Since $f$ is arbitrary, this
implies that the kernel functions are equal, which is precisely the assertion \eqref{eq:semigroup}.

Let $f(x) = \chi_{A}$ for any $A\in \mathcal{B}_{\rm fin}$, we get from Lemma \ref{lem:time_dependence}
that
$$
\int\limits_{A}K(x,y;t)\lambda(dy) \leq 1.
$$
By considering a sequence of sets $A$ which exhaust $\X$, the assertion \eqref{eq:stochastically_bounded} follows.

For any function $f(x)$, we have from \eqref{eq:semigroup} that
\begin{align*}
\int\limits_{\X}\int\limits_{\X}f(x)K(x,y;t)f(y)\lambda(dx)\lambda(dy)
=
 \left(\int\limits_{\X}\int\limits_{\X}f(y)K(z,y,t/2)\lambda(dy)\lambda(dz)\right)^{2} \geq 0.
\end{align*}
Since $f$ is arbitrary, we arrive at the assertion \eqref{eq:positive}.
\end{proof}

\begin{remark} The assertions in Proposition \ref{prop:hk_properties} combine to
give that the operator defined by
\begin{equation}\label{eq:positive_operator}
u \mapsto \int\limits_{\X}K(\cdot,y,t)u(y)\lambda(dy)
\end{equation}
is positive and has norm bounded by one.
\end{remark}

\begin{remark}
If
$$
\int\limits_{\X}K(x,y;t)\lambda(dy) = 1.
$$
the one refers to the heat kernel as \it stochastically complete. \rm
It is an entirely different undertaking to determine the settings, even in the presence of
geometry, when a heat kernel is stochastically complete.
\end{remark}

\begin{remark}
As stated in the introduction, Grigor'yan in \cite{Gr03} gives an extremely well-written
definition and subsequent discussion of heat kernels on measure spaces which defines a number
of properties for an object to be called a heat kernel (see Definition 2.1, page 146 of \cite{Gr03})
after which an associated operator of Laplacian type has been determined.  For us, we begin
with the operator and the main property that the heat kernel is an approximation of the identity,
from which we prove the existence of the heat kernel through a parametrix construction.  The above
properties establish additional properties of the heat kernel, the collection of which addresses
all of the other properties assumed by \cite{Gr03}.  As such, we view our analysis as complementary
to \cite{Gr03}.
\end{remark}

\section{Green's function, resistance measures, and other derived functions}

In this section we discuss how one can derive other well-known quantities from the heat kernel.  Most of
the material in the section points toward future areas of investigation.

\subsection{Green's function}

The Green's function, also known as the resolvent kernel is the kernel of the inverse of the Laplacian, if such an inverse exists. The inverse exists if and only if zero is not an eigenvalue of the Laplacian. In the setting when the Laplacian is probabilistic and the time is discrete, the heat kernel can be viewed as a Markov process and in that case existence of the inverse can be viewed as transiency property of the random walk. Its analogue in the continuous time case is the assumption that the time-integral of the heat kernel is finite.
Actually, the existence of the inverse follows from the following two assumptions we pose on the heat kernel.

Assume that the space $(\X,\mu)$ is such that there exists a heat kernel $K_\X$ on this space and that it belongs to $L^p(\mu)$ ($p\geq1$).
In addition,  we assume the following two properties of $K_{\X}$.

\vskip .10in
\begin{enumerate}
  \item [HK1] For all $x,y\in\X$, the functions $t\mapsto K_\X(x,y;t)$ and $t\mapsto\partial_t K_\X(x,y;t)$ are  integrable on $\mathbb{R}_{\geq 0}$.
  \item [HK2] For all $x,y\in\X$
  $$
  \int\limits_\X\int\limits_0^\infty|K_\X(x,z;t) - H_\X(y,z;t)|dt\rho_x(dy)<\infty.
  $$
\end{enumerate}

\vskip .10in
In view of the fact that
$$
\int\limits_\X\int\limits_0^\infty|H_\X(x,z;t)|dt\rho_x(dy)= c(x)\int\limits_0^\infty|H_\X(x,z;t)|dt
$$
the assumption HK2 can be replaced with
$$
 \int\limits_\X\int\limits_0^\infty| H_\X(y,z;t)|dt\rho_x(dy)<\infty.
$$
With those two assumptions we have the following proposition.

\begin{proposition}
  With the notation as above, assume that the heat kernel $K_{\X}\in L^p(\mu)$ for $p=1$ or $p=2$ satisfies assumptions HK1 and HK2. Then,
  $$
  G(x,y):=\int\limits_0^\infty K_\X(x,y;t)dt
  $$
  is the kernel of the inverse of the Laplacian, meaning that for every $f\in L^q(\mu)$ with $1/p+1/q=1$ we have that
  $$
  \Delta\left(\int\limits_\X G(x,y) f(y) \mu(dy)\right)=f(x).
  $$
\end{proposition}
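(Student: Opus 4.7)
The plan is to exploit the heat semigroup representation of $G$. For $f\in L^q(\mu)$, introduce the time-evolved function
\[
u(x,t) := \int\limits_\X K_\X(x,y;t)\, f(y)\, \mu(dy),
\]
which, by Definition \ref{def: HK lp space}, satisfies the pointwise heat equation $(\partial_t + \Delta_x) u(x,t) = 0$ on $\X\times \mathbb{R}_{>0}$ together with the Dirac initial condition $\lim_{t\to 0^+} u(x,t) = f(x)$. An application of Fubini, justified by the H\"older inequality together with HK1, yields the semigroup representation
\[
(Gf)(x) := \int\limits_\X G(x,y)\, f(y)\, \mu(dy) = \int\limits_0^\infty u(x,t)\, dt.
\]

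Next I would apply the Laplacian via its integral definition from Definition \ref{def:Laplacian}:
\[
\Delta_x (Gf)(x) = \int\limits_\X \bigl[(Gf)(x) - (Gf)(z)\bigr]\, \rho_x(dz) = \int\limits_\X \int\limits_0^\infty \bigl[u(x,t) - u(z,t)\bigr]\, dt\, \rho_x(dz).
\]
The critical step is to swap the $\rho_x$ and $t$ integrations. Writing $u(x,t) - u(z,t) = \int_\X (K_\X(x,y;t) - K_\X(z,y;t))\, f(y)\, \mu(dy)$ and applying H\"older in the $y$-variable against $f\in L^q(\mu)$ reduces the absolute-integrability requirement for Fubini to exactly the content of HK2. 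Once the interchange is performed, the heat equation gives
\[
\Delta_x (Gf)(x) = \int\limits_0^\infty \Delta_x u(x,t)\, dt = -\int\limits_0^\infty \partial_t u(x,t)\, dt.
\]

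The final step is to evaluate the last integral by the fundamental theorem of calculus. Integrability of $\partial_t K_\X(x,\cdot;t)$ from HK1, paired with $f\in L^q(\mu)$ via H\"older, yields $\partial_t u(x,\cdot)\in L^1(\mathbb{R}_{\geq 0})$, so $u(x,t)$ admits well-defined limits at both endpoints. The limit as $t\to 0^+$ equals $f(x)$ by the Dirac property of $K_\X$, and $u(x,\cdot)\in L^1(\mathbb{R}_{\geq 0})$ (again from HK1 combined with H\"older) forces $\lim_{t\to \infty} u(x,t) = 0$, since an integrable function whose derivative is also integrable must vanish at infinity. Consequently
\[
\Delta_x (Gf)(x) = \lim_{t \to 0^+} u(x,t) - \lim_{t \to \infty} u(x,t) = f(x),
\]
as required. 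I expect the main obstacle to be the HK2-based interchange of the $\rho_x$ and $t$ integrals, as HK2 is stated directly for $K_\X$ and must be lifted to the $f$-weighted pairing; the remaining manipulations—Fubini for $Gf$, use of the heat equation, and the boundary evaluation—are then routine under HK1.
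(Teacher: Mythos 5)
Your proposal is correct and follows essentially the same route as the paper: apply the integral definition of $\Delta$, interchange the $\rho_x$- and $t$-integrations via HK2 and Fubini--Tonelli, convert $\Delta$ to $-\partial_t$ through the heat equation, and evaluate the resulting $t$-integral using the Dirac property at $t=0^+$. The only (cosmetic) difference is that you carry the $f$-integrated function $u(x,t)$ throughout rather than the kernel itself, and you make explicit the vanishing of $u(x,t)$ as $t\to\infty$, which the paper's proof uses implicitly.
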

\begin{proof}
  From the definition of the Laplacian we have
  \begin{align*}
  \Delta\left(\int\limits_\X G(x,y) f(y) \mu(d y)\right) &= \int\limits_\X\left(\int\limits_\X G(x,z) f(z) \mu(dz) - \int\limits_\X G(y,z) f(z) \mu(dz)\right)\rho_x(dy)\\
  &= \int\limits_\X  f(z) \int\limits_\X (G(x,z) - G(y,z))\rho_x(dy) \mu(dz)\\
  &=  \int\limits_\X  f(z) \int\limits_\X  \int\limits_0^\infty (K_\X(x,z;t) - K_\X(y,z;t))dt\rho_x(dy) \mu(dz).
  \end{align*}
  Using HK2 and by applying Fubini-Tonelli theorem we get that
  \begin{align*}
  \Delta\left(\int\limits_\X G(x,y) f(y) \mu(dy)\right) &=  \int\limits_\X  f(z) \int\limits_0^\infty \left( \int\limits_\X  (K_\X(x,z;t) - K_\X(y,z;t)) \rho_x(dy)\right) dt \mu(dz)\\
  &=\int\limits_\X  f(z) \int\limits_0^\infty \Delta K_\X(x,y;t) dt \mu(dz)
 \\ &= \int\limits_\X  f(z) \int\limits_0^\infty \Delta K_\X(x,y;t) dt \mu(dz)\\
 &=  - \lim\limits_{\varepsilon \rightarrow 0}\int\limits_\X  f(z) \int\limits_{\varepsilon}^\infty \partial_t K_\X(x,y;t) dt \mu(dz) \\
 &=  \lim\limits_{\varepsilon \rightarrow 0}\int\limits_\X K_\X(x,y;\varepsilon) f(z)\mu(dz) =f(x),
 \end{align*}
which completes the proof.
\end{proof}

\begin{remark}\label{rem:inverse_green}
In many instances where $(\X,\mu)$ is a Hilbert space the Laplacian $\Delta$ does, in fact, have zero eigenvalues.  In that
case, one simply considers the subspace which is the orthogonal complement of the space of functions spanned by solutions
to $\Delta f = 0$ and proceeds as above.  Equivalently, one can consider the integral
$$
G(x,y;s):=\int\limits_0^\infty e^{-st}K_\X(x,y;t)dt,
$$
which yields an inverse to the operator $\Delta - s$, using the proof given above.  Then the Green's function is
gotten by proving that $G(x,y;s)$ admits a meromorphic continuation to $s=0$ with a simple pole, and the Green's function
is the constant term in the Laurent expansion at $s=0$.
\end{remark}

\subsection{Resistance measure}

In section 3.2 of\cite{JP23} the authors studied different types of resistance metrics on infinite networks.
We refer the reader to \cite{JP23} for a discussion of the key ideas for the two main metrics, namely
 the free resistance $R^F$ and the wired resistance $R^W$.  On a finite graph, the resistance metric is a function
 of two points $x$ and $y$ and it is defined to be $R(x,y): = v(x)-v(y)$ where $v$ is the unique solution to $\Delta v = \delta_{x} -\delta_{y}$ where $\delta_x$ is a unit Dirac mass at $x$;
 see page 54 of \cite{JP23}.  It is later shown in \cite{JP23} that there are different ways to extend the notion of a resistance metric
 to an infinite graph, thus yielding the two concepts $R^{F}$ (Definition 3.8, page 59) and $R^{W}$ (Definition 3.16, page 63).
Let us now present a third approach.

In \cite{Wu04}, the resistance between two nodes of a finite graph is defined in terms of entries of the regularized Green's matrix.
Namely, on a finite graph, the combinatorial Laplacian $\Delta$ has the eigenvalue zero with multiplicity one;  hence the inverse is not
well defined. One can take the Moore-Penrose inverse or regularize the inverse as in Remark \ref{rem:inverse_green}.
Let $G^\ast$ denote the regularized Green's function.  Then, in the setting of a finite graph, one can show that

\vskip .10in
\begin{equation}\label{eq. resistance defn}
R(x,y)=G^\ast(x,x)+G^\ast(y,y)-G^\ast(x,y)-G^\ast(y,x)
\end{equation}
is a metric.

\vskip .10in
\noindent
It seems natural to consider \eqref{eq. resistance defn} as the definition of a resistance metric in the general
setting considered in the present article, or perhaps in the general setting of nonatomic networks, as in section 4 of \cite{JP19}.
At this time, we have not determined what conditions imply that $R(x,y)$ is a metric, or how \eqref{eq. resistance defn}
is related to $R^{F}$ and $R^{W}$ in the setting studied in \cite{JP23}.  We will leave this study for a future article.

\subsection{Entropy}
Assume that $(\X,\mu)$ is such that the associated heat kernel is stochastically complete, meaning that
$$
\int\limits_{\X}K(x,y;t)\mu(dx) = 1.
$$
Then for fixed $x$ one can consider the associated entropy, which is defined by
$$
E(x,t):=\int\limits_{\X}K(x,y;t)\log K(x,y;t) \mu(dy).
$$
In some articles, $E(x,t)$ is, up to sign, called the Shannon entropy.  In \cite{MPR24}, the quantity $E(x,t)$
is called the Kullback-Leibler divergence in the case when $(\X,\mu)$ is the $L^{2}$ space of square integrable
functions on a compact Riemannian manifold associated to a normalized Riemannian volume.  In \cite{MPR24}
the authors study the asymptotic expansion of $E(x,t)$ as $t \rightarrow 0+$, and on page 2 state the following
as motivation of their work, which we quote.

\vskip .10in \noindent
\it
Our mathematical result, or at least the concrete asymptotic expansion of the entropy up to some fixed order, is needed in a
machine-learning algorithm called the Diffusion Variational Autoencoder (VAE), which is a variant of a Variational Autoencoder
(VAE) that allows for a closed manifold as latent space, rather than the usual Euclidean latent space.
\rm

\vskip .10in
This is intriguing assertion, as is the fact that the main ingredient in the analysis of \cite{MPR24} is the parametrix construction
of the heat kernel on a compact Riemannian manifold; see paragraph 2, page 2 of \cite{MPR24}.  As such, the results in
above are poised to undertake the problem of generalizing the results of \cite{MPR24} to the setting considered here.

\subsection{Poisson and wave kernels}
In somewhat vague terms, the Green's function is a fundamental solution to $\Delta f(x) = 0$, and the heat kernel
is a fundamental solution to $(\Delta + \partial_{t})u(x,t) = 0$.  The operators $\Delta$ and $\Delta + \partial_{t}$
are elliptic and parabolic, so in a sense there remains to consider the hyperbolic operator $\Delta + \partial_{t}^{2}$,
which is commonly called the wave operator.

A classical integral formula states that
$$
e^{-wa} = \frac{w}{\sqrt{4\pi}}\int\limits_{0}^{\infty}e^{-ta^2}e^{-\frac{w^{2}}{4t}}t^{-1/2}\frac{dt}{t}.
$$
Formally, if one writes the heat kernel as $e^{-t\Delta}$ and the wave kernel as $e^{-w\sqrt{\Delta}}$, then
one would expect that
\begin{equation}\label{eq:heat_to_wave}
e^{-w\sqrt{\Delta}} = \frac{w}{\sqrt{4\pi}}\int\limits_{0}^{\infty}e^{-t\Delta}e^{-\frac{w^{2}}{4t}}t^{-1/2}\frac{dt}{t}.
\end{equation}
We refer to \cite{JL03} which discusses, albeit briefly, some aspects of the integral transformation \eqref{eq:heat_to_wave}
known as subordination and its place in probability theory going back to L\'evy \cite{Le39}.  Specfically, if
$w$ is a real variable then one obtains the Poisson kernel and if $w$ is purely complex then one obtains the wave kernel.

In \cite{JL03} it is shown that one can use the small time asymptotic behavior of a heat kernel to prove that the wave
kernel admits a branched meromorphic continuation in time with singularities in (complex) time equal to the distance
between the space variables $x$ and $y$.  In full generality, this result is known as the Duistermaat-Guillemin theorem.
If one considers heat kernels associated to Laplacians on finite volume hyperbolic Riemann surfaces, then it is shown
in \cite{JvPS16} that one can apply this methodology to deduce an integral representation for certain Eisenstein series, including the non-holomorphic
parabolic Eisenstein series.  It would be interesting to develop this line of thought on infinite graphs, for example,
to determine if one can obtain a precise expression for the spectral measure as one has in the setting of non-compact
finite volume hyperbolic Riemann surfaces.

\subsection{Heat kernel identities}\label{sec:heat_kernel_uniqueness}

Under certain circumstances, the heat kernel is unique, so then any two expressions yield an identity.
One such expression in the setting of a separable Hilbert space is (formally) of the form
$$
K_{\X}(x,y;t) = \sum e^{-\lambda_{n}t}\phi_{n}(x)\phi_{n}(y),
$$
where $\{\phi_{n}\}$ is a complete orthonormal basis of square-integrable eigenfunctions with
eigenvalue $\lambda_{n}$.  As is well-known, by comparing the spectral expansion for the heat kernel
and the parametrix construction, one obtains Weyl-type formulas for the number of eigenvalues bounded
by $T$, as an asymptotic formula in $T$, as well as the asymptotic growth of the values $\{\phi_{n}^{2}(x)\}$.
The generalization of these results to the setting of this article now are readily available.

\section{Further examples and open questions}

Let us conclude this article by listing a number of examples.  Within many of the examples
we will state points which we believe are interesting directions for future studies.

\subsection{Finite and infinite graphs}
There are many various examples to consider here, so we will list only some of the possibilities.

Suppose $\X$ is a finite graph, so $\mathcal{B}$ is generated by a finite subsets of singleton sets,
each with positive measure.  Then, as discussed in \cite{CJKS24} and \cite{JKS26}, different examples
yield various interesting results.  For example, Proposition 19 of \cite{CJKS24} studies a discrete
half-line and derives the heat kernel (with a boundary condition) using the heat kernel on the discrete line.
Along the way, the proof implies a number of identities associated to the classical
$I$-Bessel function.  In \cite{JKS26} it is shown that one can take a Dirac
delta function as a parametrix, which yields a known combinatorial formula for the heat kernel on an infinite
graph in terms of lengths of certain paths.

\subsection{Reproducing kernel Hilbert space}
Consider the setting of a reproducing kernel Hilbert space with kernel $H(x,y)$.  Then
the function $e^{-t}H(x,y)$ is a parametrix.

In general, the function $H(x,y)$ is a
called the Bergman kernel, and itself is a widely studied function and it possesses
``existence and uniqueness'' features like the heat kernel, under certain circumstances.
Indeed, this is the point of view from Chapter 6 of \cite{Mi06} who is following a method
due to Selberg to get (what is called) the Eichler trace formula by constructing the Bergman kernel
using a second approach.    (For a review of this material using
the language of reproducing kernel Hilbert spaces, we refer to unpublished yet available
online notes by E. Assing, who refers to the Master's Thesis of F. V\"olz.)

In this setting, the small time asymptotics of the heat kernel was used to prove effective sup-norm
bounds for the Bergman kernel associated to the space of holomorphic form of fixed
weight for finite volume Riemann surfaces; see \cite{FJK19}.  The same method of
proof has been employed in other settings; see for example \cite{ABR25}, \cite{AKvP25} and references therein.
It is evident in all of these cases that the proof involves general properties of heat kernels,
and is widely adaptable under general circumstances.

There is a very general construction of positive definite kernels, which has many applications;
see section 6 of \cite{JT21}. Namely,  let $\{f_{n}\}$ be an orthonormal basis of the separable Hilbert space $\X$
under study, finite or not, and let $\{Z_{n}\}$ be an i.i.d. system of standard Gaussians.  For every
$x \in \X$, set $G_x = \sum f_n(x) Z_n$.  Then the positive definite kernel $K(x,y) = \sum f_{n}(x) f_{n}(y)$
can be realized as $K(x,y) = \mathbb{E}(G_x G_y)$, where $\mathbb{E}$ denotes the expected value.
We refer to \cite{JT21}, specifically Theorem 6.3, and references therein for further discussion and
applications.  The results in the present article now provide an explicit construction of the
corresponding heat kernel, thus makes available all the corresponding mathematical tools for future studies.

\subsection{Non-separable Hilbert spaces} In the setting of Riemannian geometry, one can define
a heat kernel on a non-compact space through compact exhaustion; see, for example, Chapter VIII of \cite{Ch84},
specifically page 188.  If $M$ is a noncompact Riemannian manifold, one considers a sequence
$\Omega_{j}$ for integers $j \geq 1$ whose compactification $\bar{\Omega_{j}}$ lies in $\Omega_{j+1}$
and such that $\displaystyle \bigcup\limits_{j=1}^{\infty} \Omega_{j} = M$.  Then one defines the
heat kernel on $M$ to be the limit in $j$ of the heat kernels on $\Omega_{j}$.  We hypothesize that similarly
one can define a heat kernel on a nonseparable Hilbert space through compact exhaustion. Again, we leave that
study for a future investigation.

%\subsection{Finite and certain infinite graphs}
%The results of this article generalize those from \cite{CJKS24} and \cite{JKS24} which consider finite
%and certain infinite graphs, respectively.  In particular, we refer to Example 11 of \cite{JKS24} which
%used the Dirac delta function as a parametrix on an infinite tree, and with elementary manipulations
%reproved known formulas for the heat kernel in this setting from \cite{CY99} and \cite{CJK15}.

%\subsection{A fractal space}
%There is a vast literature involving studies of spectral analysis and heat kernels on fractals of
%various types.  However, it seems as if the construction of the heat kernel on fractals is challenging.
%For example, in \cite{Ka12} the authors obtains upper and lower bounds for the heat kernel for small
%time which are different.  Indeed, it is stated on page 70 of \cite{Ka12} that one of the main results of
%\cite{Ku97} is that
%$$
%\lim\limits_{t \rightarrow 0^{+}} t^{c}\log K(x,y;t)
%$$
%does not exist for the appropriate power of $c$ and points $x$ and $y$ on the Sierpinski gasket.  We
%interpret this result as saying that one cannot expect a function of the form \eqref{eq:Riemannian_parametrix}
%to serve as a parametrix.

%Specifically, we do not see a parametrix associated to a fractal domain, such as the Sierpinski gasket,
%from which one can construct the associated heat kernel.

\subsection{Changing metrics}

There are instances where a heat kernel in one setting can serve as a parametrix for another.
For example, consider the setting when $\X$ consists of the vertices of a finite or infinite graph.
Assume one has a heat kernel $K_{\lambda}(x,y;t)$ associated to one measure $\lambda$ whose associated
degree function is uniformly bounded away from zero and infinity.  In this case, the small time
behavior of the heat kernel is simply $K_{\lambda}(x,y,0) = \delta_{x=y}$, meaning that the limiting
behavior of $K_{\lambda}(x,y;t)$ as $t \rightarrow 0^{+}$ is zero if $x \neq y$ and $1$ if $x=y$.
Let $\lambda'$ be another such measure.  Then it is straightforward to show that $K_{\lambda}(x,y;t)$
is a parametrix for the heat kernel $K_{\lambda'}(x,y;t)$.

As similar argument holds in the setting of reproducing kernel Hilbert spaces.

\section*{Acknowledgements}
The second author acknowledges grant support from PSC-CUNY Award 67415-00-55, which was jointly funded by the
Professional Staff Congress
and The City University of New York.  The second and third authors thank Gautam Chinta and Anders Karlsson for their many
fruitful mathematical discussions.

\thispagestyle{empty}
{\footnotesize
%\nocite{*}
\bibliographystyle{amsalpha}
\bibliography{reference_JJS1}

\vspace{5mm}\noindent
Palle Jorgensen \\
Department of Mathematics \\
University of Iowa \\
25B MacLean Hall (MLH) \\
2 W. Washington Street \\
Iowa City, IA 52240 \\
U.S.A. \\
e-mail: palle-jorgensen@uiowa.edu

\vspace{5mm}\noindent
Jay Jorgenson \\
Department of Mathematics \\
The City College of New York \\
Convent Avenue at 138th Street \\
New York, NY 10031
U.S.A. \\
e-mail: jjorgenson@mindspring.com

\vspace{5mm}\noindent
Lejla Smajlovi\'c \\
Department of Mathematics \\
University of Sarajevo\\
Zmaja od Bosne 35, 71 000 Sarajevo\\
Bosnia and Herzegovina\\
e-mail: lejlas@pmf.unsa.ba

\end{document}